\definecolor{rose}{rgb}{0.9, 0.17, 0.31}
\definecolor{darkgreen}{HTML}{3CB50F}
\theoremstyle{plain}
\newtheorem{theorem}{Theorem}[section]
\newtheorem{cor}[theorem]{Corollary}
\newtheorem{lem}[theorem]{Lemma}
\newtheorem{prop}[theorem]{Proposition}
\theoremstyle{definition}
\newtheorem{definition}[theorem]{Definition}
\newtheorem{notat}[theorem]{Notation}
\newtheorem{rmk}[theorem]{Remark}
\theoremstyle{theorem}
\newtheorem{mocktheorem}{Theorem}
\newtheorem{mockcorollary}[mocktheorem]{Corollary}
\theoremstyle{remark}
\newcommand{\ZZ}{\mathbb{Z}}
\newcommand{\PP}{\mathbb{P}}
\newcommand{\EFF}{\mathrm{Eff}}
\newcommand{\NEF}{\mathrm{Nef}}
\newcommand{\MOV}{\mathrm{Mov}}
\newcommand{\PIC}{\mathrm{Pic}}
\newcommand{\PPP}{\PP^{3}}
\newcommand{\OO}{\mathcal O}
\newcommand{\OOT}{\mathcal{O}_{\mathbb{P}^2}}
\def\QQ{{\mathbb{Q}}}
\def\PP{{\mathbb P}}
\title{Explicit birational geometry of determinantal quartic 3-folds
}
\author{Manuel Leal and C\'esar Lozano Huerta and Montserrat Vite}
\begin{document}

\address{Universidad Nacional Aut\'onoma de M\'exico\\
Instituto de Matem\'aticas, Unidad Oaxaca \\
Oaxaca de Ju\'arez, M\'exico.}
\email{manuelleal@im.unam.mx}
\email{lozano@im.unam.mx} 

\address{Centro de Ciencias Matem\'aticas, Universidad Nacional Aut\'onoma de M\'exico  \\ Morelia, Michoac\'an, M\'exico.}
\email{montserrat@matmor.unam.mx}

\maketitle

\begin{abstract} 
    A general linear determinantal quartic in $\mathbb{P}^4$ is nodal, non-$\mathbb{Q}$-factorial and rational. We show that the family $\mathcal{F}$ of such quartics also contains rational $\mathbb{Q}$-factorial quartics, and that a generic member of $\mathcal{F}$ can specialize to a rational non-$\mathbb{Q}$-factorial double quadric.
    We prove that the birational geometry of these three types of 3-folds is governed by the extrinsic geometry of a curve $C\subset \mathbb{P}^3$ of degree 10 and genus 11.
\end{abstract}

\section*{Introduction}

\noindent
 Cubic hypersurfaces in $\PP^4$ occupy a distinguished place in algebraic geometry, and their intermediate jacobians are a among their most studied features. For a smooth cubic treefold $X$, the intermediate Jacobian is the Prym variety of a smooth plane curve C endowed with an odd theta characteristic. This description not only allows one to study $X$ but also its moduli, via the geometry of $C$ \cite{radu}. 
 
 \medskip\noindent
 We take a similar approach in degree four. However, the intermediate Jacobian of a smooth quartic threefold is intricate, so we restrict to the following class of quartics.

\noindent

\medskip\noindent
A determinantal quartic in $\PP^4$ is the zero locus of the determinant of a matrix of polynomial forms. Under mild constraints on these forms, there exist five irreducible families of such quartics (Lemma \ref{5families}). 
We focus on one of them: the family $\mathcal{F}$ of \textit{linear} determinantal quartics. A general member of $\mathcal{F}$ has 20 nodes and is not $\QQ$-factorial. This means that such nodes are in special position, but remarkably, $\mathcal{F}$ is not contained in any larger family of 20-nodal quartics (Remark \ref{max1}).

\medskip\noindent
Let us examine a general $Y \in \mathcal{F}$. This quartic is rational, and its determinantal presentation yields a small resolution $f:X\to Y$, where $X$ is isomorphic to the blow-up of $\PP^3$ along a \textit{general} curve $C$ of degree 10 and genus 11 (Section \ref{construction}). We may thus study $Y$, and aim to gain understanding of its moduli, via the curve $C$.

\medskip\noindent 
Our main result, Theorem \ref{mocktheorem::introThm}, carries out this strategy by running the Minimal Model Program on $X$, provided that $C$ remains smooth and satisfies the conditions below.

\medskip\noindent
The Hilbert scheme of curves of degree 10 and genus 11 has a unique irreducible component $\mathcal{H}_{10,11}$, which generically parametrizes smooth and irreducible curves. We show that the smooth curves in $\mathcal{H}_{10,11}$ must be of three types (Theorem \ref{prop::smoothCurves}): arithmetically Cohen-Macaulay (ACM) curves (which form a dense open subset of $\mathcal{H}_{10,11}$), 
\textit{semicanonical} curves or curves contained in a cubic surface. 

\medskip\noindent
Theorem \ref{mocktheorem::introThm} summarizes our study of $X=Bl_C\PP^3$, where $C$ is smooth and thus belongs to one of the three types above. Its proof is in Theorems \ref{teorema1}, \ref{thm3} and \ref{teorema2}. Even though each case has distinctive features, there is a uniform statement.

\begin{mocktheorem}\label{mocktheorem::introThm}
Let $C\in \mathcal{H}_{10,11}$ be general in one of the three families described above parametrizing smooth and irreducible curves. The effective cone of $ X=Bl_C\PP^3$ has exactly four stable base locus chambers and the birational models induced by each of them can be described explicitly in terms of the extrinsic geometry of $C$.
\end{mocktheorem}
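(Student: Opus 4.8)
The plan is to study the effective cone of $X = Bl_C\PP^3$ by analyzing its Mori chamber (stable base locus) decomposition directly. First, I would identify the Néron–Severi space. Since $X$ is the blow-up of $\PP^3$ along a smooth curve $C$, the Picard group is rank two, generated by the pullback $H$ of the hyperplane class and the exceptional divisor $E$. Thus $\mathrm{Eff}(X)$ is a two-dimensional cone, bounded by two extremal rays. Finding these two boundary rays is the crux: one ray should be spanned by $E$ (or a class supported on it), and the other by the strict transform of a distinguished divisor in $\PP^3$ that is special to the geometry of $C$. Because $C$ has degree $10$ and genus $11$, I would look for the hypersurfaces of low degree containing $C$ with high multiplicity — the extrinsic geometry of $C$ enters precisely here, since the effective threshold is governed by the least-degree surface through $C$ and by secant/multisecant behavior.

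Next I would locate the walls interior to $\mathrm{Eff}(X)$. In a rank-two effective cone, the chamber decomposition is determined by finitely many rays between the two boundaries, and I expect exactly three interior walls, yielding four chambers, matching the statement. To pin these down, I would compute the nef cone $\mathrm{Nef}(X)$ and the movable cone $\mathrm{Mov}(X)$: the nef cone's outer boundary is one wall (beyond which divisors cease to be base-point free), and the movable cone's boundary is another (beyond which the stable base locus becomes divisorial). Each interior wall corresponds to a birational contraction or flip, so I would run the MMP on $X$, identifying at each wall-crossing the extremal contraction and the resulting birational model. By the cited Theorems \ref{teorema1}, \ref{thm3}, and \ref{teorema2}, these models are already described, so the task here is to assemble them into a single chamber picture and verify that the four chambers and their base loci are exhausted.

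The key computational inputs are the intersection numbers on $X$ and the characterization of effective and movable classes $aH - bE$. I would determine, for which ratios $a/b$ the linear system $|aH - bE|$ is nonempty, base-point free, or has a divisorial versus higher-codimension base locus; this reduces to understanding the projective geometry of $C$ — namely the surfaces of each degree containing $C$, the trisecant lines, and the quadrisecant or multisecant loci that force base points. The genericity hypothesis $C \in \mathcal{H}_{10,11}$ smooth ensures this geometry is as generic as possible, so that no unexpected extra surfaces appear and the walls occur exactly where the dimension count predicts.

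The main obstacle I anticipate is verifying that the stable base locus decomposition has \emph{exactly} four chambers — neither more nor fewer. Showing there are at least four amounts to exhibiting three genuinely distinct walls with distinct birational behavior, which follows from the explicit models. The harder direction is showing there are no additional walls: this requires a precise control of $\mathrm{Eff}(X)$ all the way to its boundary, i.e.\ proving that beyond the last identified wall the base locus stabilizes and no further contraction occurs. I expect this to hinge on an effective non-existence statement — that $C$ lies on no surface of degree lower than the one giving the extremal effective ray — which is exactly where the extrinsic geometry of $C$ and the genericity of $C$ in $\mathcal{H}_{10,11}$ must be invoked decisively.
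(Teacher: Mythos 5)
There is a genuine gap: you treat the hypothesis ``$C\in\mathcal{H}_{10,11}$ smooth'' as if it meant ``$C$ generic,'' but the statement quantifies over \emph{all} smooth curves in the component, and the chamber structure is not uniform across them. Your plan repeatedly invokes genericity (``no unexpected extra surfaces appear,'' ``$C$ lies on no surface of degree lower than the one giving the extremal effective ray'') to fix the walls, yet the theorem explicitly includes smooth curves lying on a cubic surface, for which the effective cone is $[3H-E,E]$ with walls at $5H-E$ and $4H-E$ and $-K_X=4H-E$ is not even nef, as well as semicanonical curves ($\omega_C=\OO_C(2)$), for which the anticanonical map is $2{:}1$ onto a quadric rather than birational onto a quartic. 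The paper's argument is therefore a trichotomy, not a single generic computation: Theorem \ref{prop::smoothCurves} shows that every smooth irreducible curve of degree $10$ and genus $11$ is ACM, semicanonical, or lies on a smooth cubic, and this classification is the load-bearing step. It is proved by liaison --- linking $C$ to a residual curve $C'$ of degree $6$ and genus $3$ in a complete intersection of two quartics --- combined with the classification of the possible Hartshorne--Rao modules $HR(C')$ (Theorem \ref{thm::HR63}), ruling out two of the five cases by the numerical relation of Proposition \ref{prop::formulaEstrellita}. Without this input you cannot assert ``exactly four chambers'' for every smooth $C$: your wall-counting would have to exclude, a priori, arbitrarily degenerate extrinsic behavior of a smooth curve, and nothing in your outline does so. The three theorems you cite are then proved separately, one per stratum, with genuinely different diagrams (flop, double-cover flop, and inverse flip plus divisorial contraction to an elliptic singularity), so ``assembling them into a single chamber picture'' presupposes the very case analysis you omitted.

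A secondary but substantive issue is your mechanism for the extremal effective ray in the generic case. You propose to find it by a dimension count for low-degree surfaces through $C$ with high multiplicity, but the boundary class is $40H-11E$ --- a degree-$40$ surface with multiplicity $11$ along $C$ --- and the paper does not obtain it by counting sections. It is produced structurally: the Sarkisov link of Proposition \ref{SLink1} gives a flop $\sigma$ with $\sigma^*H^+=11H-3E$, hence $\sigma^*(E^+)=40H-11E$ is effective, and extremality is verified by exhibiting a moving curve $C'$ (the residual intersection of two members of the $3$-dimensional system $|11H-3E|$, away from the twenty $4$-secant lines) with $(40H-11E)\cdot C'=0$ (Proposition \ref{effectivecone}). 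Even the input $\dim|11H-3E|=3$ requires a nontrivial cohomological staircase through $3H-E$ and $7H-2E$ plus an explicit \texttt{Macaulay2} bound (Lemma \ref{lemma11he}), not a naive expected-dimension argument. So while your overall MMP-and-wall-crossing frame is the right genre, the two decisive ingredients --- the liaison/Hartshorne--Rao trichotomy for all smooth $C$, and the transport of the exceptional divisor across the link to pin down $\EFF(X)$ --- are missing, and the proof does not close without them.
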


\medskip\noindent
When $C\in \mathcal{H}_{10,11}$ is general $i.e.$, ACM, the anti-canonical model of $X=Bl_C\PP^3$ is a linear determinantal quartic $Y$. This means that $X$ and $Y$ belong to families which can be studied via $C$. Since $C$ has exactly two specializations that are smooth but not ACM (Theorem \ref{prop::smoothCurves}), then through the anti-canonical model of $Bl_C\PP^3$, there are two distinguished specializations of $Y$:

\begin{mockcorollary}\label{mockcorollary::introCor}
The general $Y\in \mathcal{F}$ can specialize to a non-$\mathbb{Q}$-factorial double quadric $Y_0\to Q\subset \PP^4$ ramified along a degree 8 surface with twenty nodes; or to a $\mathbb{Q}$-factorial quartic with ten nodes and one elliptic singularity.
\end{mockcorollary}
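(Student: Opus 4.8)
The plan is to deduce the statement from Theorem~\ref{mocktheorem::introThm} together with the classification of smooth non-ACM members of $\mathcal H_{10,11}$ in Theorem~\ref{prop::smoothCurves}, by computing the anti-canonical model of $X=Bl_C\PP^3$ at each of the two distinguished specializations of $C$. We record the facts to be used: $-K_X=4H-E$, so $H^0(X,-K_X)$ is the space of quartics through $C$; $(-K_X)^3=4$; and for the strict transform of a line $\ell$ one has $-K_X\cdot\tilde\ell=4-\#(\ell\cap C)$, so the curves contracted by the anti-canonical map are exactly the four-secant lines of $C$. For a general, ACM curve these are the twenty four-secant lines, contracted to the twenty nodes of the determinantal quartic, and $X\to Y$ is a small contraction. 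In each degenerate case we first check that $h^0(X,-K_X)=5$ is preserved, so that the anti-canonical map still targets $\PP^4$; this is the constancy of $\chi(\mathcal I_C(4))$ along $\mathcal H_{10,11}$ together with $h^1(\mathcal I_{C_i}(4))=0$.

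For the first specialization $C_1$ the non-ACM structure produces a single quadratic relation among the five quartics through $C_1$. The anti-canonical image therefore lies on, hence equals, a quadric threefold $Q\subset\PP^4$, and $(-K_X)^3=4=2\deg Q$ forces the map $X\to Q$ to have degree two. The anti-canonical ring is then not generated in degree one: adjoining the degree-two generator $w$ with relation $w^2=f_4|_Q$ exhibits $Y_0=\Proj\bigoplus_n H^0(X,-nK_X)$ as the double cover of $Q$ branched along the octic $B=Q\cap\{f_4=0\}$. The twenty nodes of $B$ are the images of the twenty contracted four-secant lines, and since $X\to Y_0$ is still small, $Y_0$ is non-$\QQ$-factorial.

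For the second specialization $C_2$ the anti-canonical image remains a quartic, but now a divisor is contracted. The relevant class is $D_0=10H-3E$, singled out by $(-K_X)^2\cdot D_0=0$: for $C_2$ the linear system of degree-ten surfaces triple along $C_2$ becomes effective, and $|-mK_X|$ contracts the corresponding divisor to a single point, the elliptic singularity of $Y_2$, while ten four-secant lines survive and are contracted to ten nodes. The key point is that this divisorial contraction kills the class $[D_0]$, so that $\mathrm{Cl}(Y_2)=\mathrm{Pic}(X)/\langle[D_0]\rangle\cong\ZZ$ has rank one; since the pullback of $-K_X$ generates $\mathrm{Pic}(Y_2)\otimes\QQ$ once triviality on the contracted curves is imposed, the defect vanishes and $Y_2$ is $\QQ$-factorial. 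Thus, in contrast with the general and the $C_1$ cases, the divisorial contraction compensates for the small ones.

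The main obstacle is the analysis at $C_2$: proving that the system of degree-ten surfaces triple along $C_2$ is effective precisely along this specialization and that the resulting divisor contracts to an isolated point, and then identifying the resulting Gorenstein singularity as elliptic rather than worse. This will require the minimal free resolution of $\mathcal I_{C_2}$ and a local model of the contraction near the singular point, after which the class-group computation yielding $\QQ$-factoriality is the delicate synthesis, since the usual node-counting defect criterion must be combined with the effect of the divisorial contraction.
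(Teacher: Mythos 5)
Your overall architecture matches the paper's: the corollary is deduced from the classification of smooth members of $\mathcal{H}_{10,11}$ (Theorem \ref{prop::smoothCurves}, with Proposition \ref{toolvite} supplying the irreducible base for the specializations) together with a case-by-case computation of the anticanonical model of $X=Bl_C\PP^3$, which is the content of Theorems \ref{thm3} and \ref{teorema2}. However, both of your case analyses contain genuine gaps. In the semicanonical case, everything rests on the assertion that ``the non-ACM structure produces a single quadratic relation among the five quartics through $C_1$''; this is precisely what needs proof, and it cannot be extracted from $HR(C_1)\cong\mathbb{K}(-2)$ by any dimension count: by Lemma \ref{lem::chi}, $\chi(8H-2E)=15=\dim\operatorname{Sym}^2H^0(X,-K_X)$ for \emph{every} smooth $C\in\mathcal{H}_{10,11}$ (and in fact $h^0(-2K_X)=15$ in both the ACM and semicanonical cases), so the kernel of $\operatorname{Sym}^2H^0(-K_X)\to H^0(-2K_X)$ is numerically invisible --- for ACM $C$ it is zero. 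The paper argues in the opposite direction (Lemma \ref{lemma:: doublecover}): it covers $X$ by the residual curves $C'$ of bidegree $(2,4)$ linked to $C$ by two quartics, observes $\OO_{C'}(-K_X)\cong\omega_{C'}$ with $C'$ hyperelliptic, concludes that $f_0$ is $2:1$ on each $C'$ and hence on $X$, and only then deduces from $(-K_X)^3=4$ that the image is a quadric. Your quadratic relation is a consequence of this, not a substitute for it.

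In the cubic-surface case the gap is more serious. You single out $D_0=10H-3E$ by the condition $(-K_X)^2\cdot D_0=0$, but this criterion is evaluated on $X$, where $-K_X$ is \emph{not} nef: the two $5$-secant lines $e_1,e_2\subset S$ satisfy $-K_X\cdot e_i=-1$, so $|-mK_X|$ is only a rational map and intersection numbers on $X$ do not detect what it contracts. The divisor contracted to the elliptic singularity is the cubic surface $S$ itself, of class $3H-E$ (one checks $(-K_X)^2\cdot(3H-E)=-2$ on $X$; the relevant vanishing $(-K_{X^+})^2\cdot S^+=0$ holds only on the flipped model). Your class $10H-3E=2(3H-E)+(4H-E)$ is indeed effective exactly for $C\in D_2$, but its members are non-reduced, containing $S$ with multiplicity two plus a moving anticanonical piece that maps onto hyperplane sections of $Y_0$; only the $S$-component is contracted to the point. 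Consequently your computation $\mathrm{Cl}(Y_2)=\mathrm{Pic}(X)/\langle[D_0]\rangle$ quotients by the wrong class, and the rank-one conclusion survives only because $3H-E$ and $10H-3E$ are both primitive. Most importantly, the entire mechanism that makes the statement true --- the construction of the flip $X\dashrightarrow X^+$ of $e_1\cup e_2$ via the normal bundle computations of Lemmas \ref{lem::normalBundleLines} and \ref{lem::normalBundleDirectrix}, the verification that $X^+$ has terminal half-point singularities with $K_{X^+}\cdot y_i^+=-\tfrac12$, and the identification of $Y_0$ as the target of a $K_{X^+}$-negative extremal \emph{divisorial} contraction, whence $\QQ$-factoriality --- is absent from your proposal. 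You flag this as ``the main obstacle,'' but it is not a refinement of your outline: it is the technical core of Section \ref{S4}, without which neither the elliptic singularity, nor the ten nodes, nor the $\QQ$-factoriality claim is established.
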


\medskip\noindent
The intermediate Jacobian $IJ(X)$ of $X$ coincides with the Jacobian of $C$ as a principally polarized abelian variety \cite[\S 3]{GC}. Consequently, the specializations above correspond to variations of $IJ(X)$, reflecting changes in moduli rather than merely extrinsic specializations. For instance, a double quadric $Y_0\to Q\subset \PP^4$ from Corollary \ref{mockcorollary::introCor} no longer belongs to $\mathcal{F}$, analogous to what occurs in $\mathcal{M}_3$ when a smooth plane curve of degree 4 specializes to a double conic
(further discussion: Section \ref{S5}). 

\medskip\noindent
 As the generic $Y\in \mathcal{F}$ specializes to a $\mathbb{Q}$-factorial quartic a dramatic degeneration occurs: ten nodes coalesce into an elliptic singularity. This geometry, though somewhat intricate, can be described explicitly thanks to the smoothness of $C$, as explained below.
 
\medskip\noindent
Theorem \ref{mocktheorem::introThm} summarizes a wealth of geometric information which we consider is the central contribution of the paper. Its proof is a detailed account of the birational geometry of $X$ in terms of the extrinsic geometry of $C\subset \PP^3$. For example, a \textit{general} $C\in \mathcal{H}_{10,11}$ has exactly twenty 4-secant lines whose strict transforms are contracted under the anti-canonical map $f:X\to Y$, producing the twenty double points of $Y$. Theorem \ref{teorema1} describes how $X$ sits in a Sarkisov link that flops these 4-secant lines, encompassing all of its birational models:
\begin{equation}\label{SarkisovIntro}
\xymatrix @!=1pc{
& X\ar[dl] \ar@{-->}[rr]^{flop} \ar[dr]^{f}&  &  X^+ \ar[dl]\ar[dr]&\\
\PP^{3} \ar@/_0.6cm/@{-->}[rrrr]_{\phi} && Y &&  \PP^3} 
\end{equation}

\medskip\noindent
Lemma \ref{lemma11he} shows that the Cremona transformation $\phi$ above is defined by surfaces of degree 11 vanishing with multiplicity 3 along $C$. The non-trivial edge of the effective cone $\EFF(X)$ is spanned by a surface of degree 40, which vanishes to order 11 along $C$ and whose normalization is a ruled surface over $C$.

\medskip\noindent
    As $C$ specializes to a smooth curve $C'$ with $\omega_{C'}=\mathcal{O}_{C'}(2)$, the birational geometry of $X$ can also be explained as above. A distinctive feature of this case is that the anti-canonical linear system realizes $X$ as a double quadric ramified along a surface with twenty nodes that fail to impose independent conditions on cubics (Proposition \ref{prop:: doublequadric}). 
    Theorem \ref{thm3} describes this explicitly.

\medskip\noindent
If $C$ specializes to a smooth curve that lies in a cubic surface $S$, its 4-secant lines also specialize and two $5$-secant lines $l_1,l_2$ show up. In this case, $X$ is the target of a flip, and diagram \eqref{SarkisovIntro} becomes:
\begin{equation*}
\xymatrix @!=1pc{
& X\ar[dl] \ar@{<--}[rr]_{\varphi}^{\mbox{\tiny{flip}}} \ar[dr]^{}&  &  X^{+} \ar[dl]_{}\ar[dr]^{\pi^+}&\\
\PP^{3} && Y && \ \ \  Y_0\subset \ \PP^{4}} 
\end{equation*}
where $X^+$ is a singular terminal threefold which admits a divisorial contraction onto a quartic $Y_0\subset \PP^4$. Indeed, the union $l_1\cup l_2\subset X$ is the flipping locus of $\varphi$, and the cubic surface $S\subset X$ maps to a quintic del Pezzo in $X^+$, which is then contracted to an elliptic singularity of $Y_0$. The anticanonical map is a rational divisorial contraction $\pi^+\circ \varphi^{-1}:X \dashedrightarrow Y_0$. This is the content of Theorem \ref{teorema2}.

\medskip\noindent
Summarizing, these results provide a novel example of how the extrinsic geometry of the curve $C$ offers insight into the birational geometry of a determinantal quartic and its moduli $\mathcal{M}$. In fact, this paper suggests there are two irreducible divisors in $\mathcal{M}$ associated to the only smooth non-ACM curves in $\mathcal{H}_{10,11}$ (Corollary \ref{mockcorollary::introCor}). Understanding $\mathcal{M}$ is our ultimate goal (see Remark \ref{moduliX}), but we postpone further analysis to a later paper in order to keep this one to a reasonable length.

\subsection*{Related results}
\noindent
If $C\in \mathcal{H}_{10,11}$ is general, then the Sarkisov link of diagram \eqref{SarkisovIntro} was previously known \cite{lamy,oslo}. Our contribution is to provide a detailed description of this link in terms of the extrinsic geometry of $C$ using the MMP. Describing the cases when $C$ specializes to a smooth non-ACM curve is new, as far as we are aware. Our main tool in doing so is the Hilbert scheme of curves and the Hartshorne-Rao module.

\subsection*{Organization}
\noindent
Section \ref{sec::2} contains preliminaries. Section \ref{Sec2} studies the Hilbert scheme of curves of degree 10 and genus 11 and identifies its main component, as well as two divisors in it that yield Corollary \ref{mockcorollary::introCor}. This section also shows that the smooth specializations of $C$ discussed above are the only possible ones.

\noindent
Section \ref{sec::3} describes the birational models of the blow-up $X = Bl_C \PP^3$, where $C$ is a general curve in the main component of its Hilbert scheme. Section \ref{S5} addresses the case when $C$ is a smooth semicanonical curve, while Section \ref{S4} treats the case when $C$ lies on a smooth cubic surface.

\section*{acknowledgments}
\noindent We thank Carolina Araujo, Michela Artebani, Alex Massarenti, Pedro Montero and Daniela Paiva for useful conversations. Special thanks to Carolina Araujo and Daniela Paiva for sharing their work with us \cite{caro}, and for pointing out important literature on the subject (see Remarks \ref{AutK3}, \ref{AutK3II}). During the preparation of this note ML was funded by CONAHCYT under the program ``Becas de Posgrado". MV was funded by CONAHCYT, grant ``Estancias Posdoctorales por M\'exico''.

\medskip
\section{ Preliminaries}\label{sec::2}

\noindent
This paper works over an algebraically closed field $\mathbb{K}$ of characteristic 0. Let $C\subset \PP^3$ be a smooth curve of degree $d$ and genus $g$, and let $X:=Bl_C \PP^3$ denote the blow-up of $\PP^3$ along $C$. In this section, we collect elementary features of $X$. 

\medskip\noindent
Since $H^1(X,\OO_X)=0$, linear and numerical equivalence coincide in $X$, so that $\PIC(X)\cong N^1(X)\cong \mathbb{Z}^2$. Consequently, $N^1(X)=\langle H, E\rangle$ is generated by the class $H$, the pullback of a hyperplane section in $\PP^3$; and the exceptional divisor $E$. We use this notation for the rest of the paper.

\medskip\noindent
The effective divisors with class $n H-k E$ correspond bijectively to hypersurfaces in $\PP^3$ of degree $n$ vanishing along $C$ with order at least $k$. If $C'$ is the strict transform of an irreducible curve of degree $n'$ intersecting $C$ on $k'$ points transversally, then $H\cdot C'=n'$ and $E\cdot C'=k'$.

\begin{lem}\cite[Prop. 13.13]{3264} Let $H$ and $E$ be as above. Then we have the following identities $$H^3=1, \quad
        H^2\cdot E=0,\quad
        H\cdot E^2 = -d, \quad
        E^3 = 2-2g-4d.$$

\end{lem}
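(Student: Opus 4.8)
The plan is to work directly on the blow-up $\pi\colon X\to \PP^3$, writing $H=\pi^*h$ for $h$ the hyperplane class and identifying the exceptional divisor as $E\cong\mathbb{P}(N)$, where $N:=N_{C/\PP^3}$ is the normal bundle and $p\colon E\to C$ is the projection. The two ``easy'' identities come straight from the projection formula. Since $H=\pi^*h$ we get $H^3=\pi^*(h^3)$, and applying $\pi_*$ yields $h^3=1$ (the degree of a point in $\PP^3$), so $H^3=1$. For $H^2\cdot E$ I would use $H^2\cdot E=h^2\cdot\pi_*E$; because $E$ is contracted onto the curve $C$, its class pushes forward to zero, so $H^2\cdot E=0$. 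Geometrically this is just the statement that the preimage of a general line in $\PP^3$ avoids $C$, hence avoids $E$.

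The remaining two numbers both reduce to intersection theory on the ruled surface $E$ by restriction. The key inputs are $\mathcal{O}_X(E)|_E=\mathcal{O}_{\mathbb{P}(N)}(-1)$ and $H|_E=p^*(\mathcal{O}_C(1))$. Writing $\xi$ for the relative tautological class on $E=\mathbb{P}(N)$, so that $E|_E=-\xi$, the projection satisfies $p_*\xi=[C]$ since $\xi$ has degree $1$ on each fiber. For $H\cdot E^2$ I would compute $(H|_E)\cdot(E|_E)=p^*(\mathcal{O}_C(1))\cdot(-\xi)$ on $E$ and push down by $p$: by the projection formula this gives $-\deg\mathcal{O}_C(1)=-d$, since $\mathcal{O}_C(1)$ is the degree-$d$ divisor cut on $C$ by a hyperplane. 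Hence $H\cdot E^2=-d$.

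For $E^3=(E|_E)^2=\xi^2$ I would invoke the Grothendieck relation for the rank-$2$ bundle $N$ over the curve $C$, where $c_2(N)=0$ automatically, so that $\xi^2$ is a scalar multiple of $p^*c_1(N)\cdot\xi$ and pushes down to $\pm\deg N$. The degree of the normal bundle comes from the exact sequence $0\to T_C\to T_{\PP^3}|_C\to N\to 0$, which gives $\deg N=\deg(T_{\PP^3}|_C)-\deg T_C=4d-(2-2g)$, using $c_1(T_{\PP^3})=4h$ so that $\deg(T_{\PP^3}|_C)=4d$, together with $\deg T_C=2-2g$. Thus $\deg N=4d-2+2g$.

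The one place to be careful --- and the main obstacle --- is the sign bookkeeping on $E=\mathbb{P}(N)$: which convention makes $\mathcal{O}_X(E)|_E$ equal to $\mathcal{O}(-1)$, and the matching sign in the Grothendieck relation (equivalently, whether one projectivizes $N$ or the conormal bundle $N^\vee$). I would pin this down so that $E^3=-\deg N$, and then confirm it on the simplest case, $C$ a line with $d=1$ and $g=0$: there $N=\mathcal{O}_{\PP^1}(1)^{\oplus 2}$, $E\cong\PP^1\times\PP^1$, and the formula predicts $E^3=2-0-4=-2=-\deg N$, which is correct. With the sign fixed we obtain $E^3=-\deg N=2-2g-4d$, completing the list of four identities.
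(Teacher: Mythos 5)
Your proof is correct, and it matches the standard argument: the paper gives no proof of its own here, merely citing \cite[Prop.\ 13.13]{3264}, whose derivation is exactly your computation --- push-pull for the classes involving $H$, and intersection theory on $E=\mathbb{P}(N)$ via $\mathcal{O}_X(E)|_E=\mathcal{O}(-1)$, the Grothendieck relation (with $c_2$ vanishing on a curve), and $\deg N = 4d-(2-2g)$ from the normal bundle sequence. Your device of fixing the universal sign in $E^3=\pm\deg N$ by the test case of a line is legitimate, since the sign is independent of $C$.
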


\medskip\noindent
The Hirzebruch-Riemann-Roch formula yields the following lemma, which will be used in Section \ref{sec::3}.
\begin{lem}\label{lem::chi}
    Suppose that $(d,g)=(10,11)$. Then for all $k,n\in\mathbb{Z}$, the following holds:
    $$\chi(X,nH-kE)=\binom{n+3}{3}-5k(nk+n-2k^2-k+1).$$
\end{lem}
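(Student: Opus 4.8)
The plan is to apply Hirzebruch--Riemann--Roch to the line bundle $L = nH - kE$ on the smooth projective threefold $X$. Writing $c_i = c_i(T_X)$, the degree-three part of $\mathrm{ch}(L)\,\mathrm{td}(X)$ gives
\begin{equation*}
\chi(X, nH-kE) = \tfrac{1}{6}c_1(L)^3 + \tfrac{1}{4}c_1(L)^2 c_1 + \tfrac{1}{12}c_1(L)(c_1^2 + c_2) + \tfrac{1}{24}c_1 c_2,
\end{equation*}
with $c_1(L) = nH - kE$. Every cubic monomial in $H$ and $E$ is supplied by the preceding Lemma, and the blow-up canonical bundle formula $K_X = -4H + E$ gives $c_1 = 4H - E$. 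Thus the only data not yet in hand are the three numbers $H\cdot c_2$, $E\cdot c_2$ and $c_1\cdot c_2$, after which the right-hand side is a completely explicit polynomial in $n$ and $k$ which one expands and compares to the claim.

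Determining the $c_2$ data is the crux, and I would avoid the blow-up Chern class formula by pinning these numbers down from two Euler characteristics that are computable by hand. Since $R\pi_*\OO_X = \OO_{\PP^3}$ we get $\chi(X,\OO_X) = \chi(\PP^3,\OO_{\PP^3}) = 1$, which by the formula above forces $c_1 c_2 = 24$. Next, $\pi_*\OO_X(-E) = \mathcal{I}_C$ with $R^{>0}\pi_*\OO_X(-E)=0$, so $\chi(X,-E) = \chi(\PP^3,\mathcal{I}_C) = \chi(\OO_{\PP^3}) - \chi(\OO_C) = 1-(1-g) = g$. Expanding HRR at $(n,k)=(0,1)$, where $c_1(L)=-E$, turns $\chi(X,-E)=g$ into a single linear relation with $E\cdot c_2$ as the only unknown; combined with $4\,(H\cdot c_2) - (E\cdot c_2) = c_1 c_2 = 24$ this yields both numbers (for general $d,g$ one finds $E\cdot c_2 = 4d$ and $H\cdot c_2 = d+6$, so for $d=10$, $g=11$, $E\cdot c_2 = 40$ and $H\cdot c_2 = 16$).

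With all intersection numbers known, the final step is to substitute $c_1(L)=nH-kE$, $c_1=4H-E$ and the values $H^3=1$, $H^2E=0$, $HE^2=-d$, $E^3=2-2g-4d$ into the HRR expression, expand, and set $d=10$, $g=11$, which should reproduce $\binom{n+3}{3} - 5k(nk+n-2k^2-k+1)$. A clean alternative that never mentions $c_2$ is to compute $\chi(X,nH-kE)=\chi(\PP^3,\mathcal{I}_C^k(n))$ for $k\ge 0$ directly: filter $\OO_{\PP^3}/\mathcal{I}_C^k$ by the sheaves $\mathcal{I}_C^j/\mathcal{I}_C^{j+1}\cong \mathrm{Sym}^j N_{C/\PP^3}^\vee$, apply Riemann--Roch on $C$ using $\deg N_{C/\PP^3} = 2g-2+4d = -E^3$, and sum the resulting elementary expressions over $j=0,\dots,k-1$; since both sides of the claim are polynomials in $(n,k)$ of degree $\le 3$ agreeing on the Zariski-dense set $\{k\ge 0\}$, the identity extends to all integers. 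The only real obstacle in either route is the correct computation of the second Chern contribution—equivalently, the normal-bundle bookkeeping on $C$—after which everything is routine expansion.
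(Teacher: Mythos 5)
Your proposal is correct and takes essentially the same approach as the paper, which offers no details beyond asserting that the Hirzebruch--Riemann--Roch formula yields the lemma. Your determination of the Chern data via $\chi(X,\OO_X)=1$ and $\chi(X,-E)=g$ (giving $c_1c_2=24$, $E\cdot c_2=4d=40$, $H\cdot c_2=d+6=16$) is valid, and substituting these together with $H^3=1$, $H^2E=0$, $HE^2=-10$, $E^3=-60$ into HRR does reproduce $\binom{n+3}{3}-5k(nk+n-2k^2-k+1)$ exactly.
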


\medskip\noindent
This paper examines how the birational geometry of $X$ depends on $C$. Naturally, the abstract deformations of $X$ are determined by the extrinsic geometry of $C$, as recorded in the proposition below. Some implications of this dependence for the geometry of the family $\mathcal{F}$ are discussed in Remark \ref{moduliX}.

\begin{prop}\cite[Ex. 13.1.1]{har}\label{Prop:: isoX}
Every abstract deformation of $X$ arises from embedded deformations of $C\subset \PP^3$, and the obstructions to deforming $X$ coincide with those to deforming $C$.
\end{prop}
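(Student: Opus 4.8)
The plan is to compare the abstract deformation theory of $X$, controlled by the groups $H^i(X,T_X)$, with the embedded deformation theory of $C$, controlled by $H^i(C,N_{C/\PP^3})$, and to build a natural transformation between the two functors. Blowing up a flat family $\mathcal C\subset \PPP\times S$ of embedded deformations of $C$ along its (flat) total space produces a family $\mathrm{Bl}_{\mathcal C}(\PPP\times S)\to S$ of deformations of $X$; this yields a morphism of deformation functors $\mathrm{Hilb}_C\to \mathrm{Def}_X$ whose differential is a map $H^0(N_{C/\PP^3})\to H^1(X,T_X)$. By the standard smoothness criterion for morphisms of functors of Artin rings, it then suffices to prove that this differential is surjective and that the induced map on obstruction spaces $H^1(N_{C/\PP^3})\to H^2(X,T_X)$ is injective. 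Smoothness gives that every abstract deformation of $X$ is induced by an embedded deformation of $C$, and the obstruction statement identifies the two obstruction spaces.

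First I would reduce the computation of $H^i(X,T_X)$ to cohomology on $\PPP$ by pushing forward along $\pi\colon X\to \PPP$. Starting from the tangent sequence of the blow-up $0\to T_X\to \pi^*T_{\PPP}\to (i_E)_*\mathcal E\to 0$, where $i_E\colon E\hookrightarrow X$ is the exceptional divisor and $\mathcal E=\mathrm{coker}(d\pi)$, a local computation in the two standard charts along $E=\PP(N_{C/\PP^3})$ identifies $\mathcal E$ with the line bundle $\OO_E(-E)$, which restricts to $\OO_{\PP^1}(1)$ on the fibers of $p=\pi|_E\colon E\to C$. Applying $R\pi_*$ and using $R\pi_*\OO_X=\OO_{\PPP}$, together with $R^1p_*\OO_E(-E)=0$ and the intrinsic fact that a vector field on $\PPP$ lifts to $X$ precisely when it is tangent to $C$, one obtains $R^{q}\pi_*T_X=0$ for $q\geq 1$ and the exact sequence
\[
0\to \pi_*T_X\to T_{\PPP}\to (i_C)_*N_{C/\PP^3}\to 0,\qquad (\star)
\]
where $i_C\colon C\hookrightarrow\PPP$ and the right-hand map sends a vector field to its normal component along $C$. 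By the Leray spectral sequence, $H^i(X,T_X)\cong H^i(\PPP,\pi_*T_X)$ for all $i$.

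Next I would invoke the rigidity of projective space: the Euler sequence gives $H^i(\PPP,T_{\PPP})=0$ for all $i\geq 1$, while $H^0(\PPP,T_{\PPP})=\mathfrak{pgl}_4$. Feeding $(\star)$ into the long exact sequence in cohomology then yields
\[
H^1(X,T_X)\cong \mathrm{coker}\bigl(H^0(T_{\PPP})\to H^0(N_{C/\PP^3})\bigr),\qquad H^2(X,T_X)\cong H^1(N_{C/\PP^3}).
\]
The first isomorphism realizes every first-order abstract deformation of $X$ as the image of an embedded first-order deformation of $C$, the kernel being exactly the deformations of $C$ induced by the infinitesimal action of $\mathrm{PGL}_4$, which do not change $X$ up to isomorphism. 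The second shows the two obstruction spaces coincide. Since the connecting map $H^0(N_{C/\PP^3})\to H^1(X,T_X)$ is the differential of the blow-up transformation and is surjective, while $H^1(N_{C/\PP^3})\to H^2(X,T_X)$ is an isomorphism, the criterion of the first paragraph applies and concludes the proof.

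The main obstacle is the sheaf-theoretic input of the second paragraph: correctly identifying the cokernel $\mathcal E$ of $d\pi$ and establishing $R^{>0}\pi_*T_X=0$. This is a local computation in the two charts of the blow-up along $C$, where the fiberwise degree of $\mathcal E$ must come out equal to $1$, so that $R^1p_*\mathcal E=0$ and $p_*\mathcal E=N_{C/\PP^3}$; any sign or convention slip in the identification $\OO_E(\pm E)=\OO_{\PP(N)}(\mp 1)$ would propagate through the whole argument. Once these pushforwards are pinned down and matched against the intrinsic description of $\pi_*T_X$ as the sheaf of vector fields tangent to $C$, the remaining steps are formal.
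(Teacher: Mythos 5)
The paper offers no proof of this proposition: it is quoted from Hartshorne's \emph{Deformation Theory}, Ex.~13.1.1, and your argument is precisely the standard solution to that exercise. You push $T_X$ forward along $\pi$, obtain the sequence $0\to \pi_*T_X\to T_{\PP^3}\to (i_C)_*N_{C/\PP^3}\to 0$ with $R^{\geq1}\pi_*T_X=0$, use $H^{\geq1}(\PP^3,T_{\PP^3})=0$ from the Euler sequence to get $H^1(X,T_X)\cong\operatorname{coker}\bigl(H^0(T_{\PP^3})\to H^0(N_{C/\PP^3})\bigr)$ and $H^2(X,T_X)\cong H^1(N_{C/\PP^3})$, and close the argument with the smoothness criterion applied to the blow-up morphism of deformation functors. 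This is the intended route, and both halves of the statement (surjectivity onto abstract deformations, identification of obstruction spaces) come out correctly.

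One correction, on exactly the point you flagged as delicate: the cokernel of $d\pi$ is not $\OO_E(-E)$. The chart computation shows that the induced map $p^*N_{C/\PP^3}\to \mathcal{E}$ is surjective with kernel the tautological subbundle $\OO_E(E)\subset p^*N_{C/\PP^3}$ (at a point of $E$ the kernel is spanned by the very normal direction that point represents), whence $\mathcal{E}\cong p^*N_{C/\PP^3}/\OO_E(E)\cong p^*\det N_{C/\PP^3}\otimes\OO_E(-E)$. The twist is invisible fiberwise, so your fiber degree $1$ and the vanishing $R^1p_*\mathcal{E}=0$ stand; but it is what makes $p_*\mathcal{E}\cong N_{C/\PP^3}$ true, as one sees immediately by pushing forward $0\to\OO_E(E)\to p^*N_{C/\PP^3}\to\mathcal{E}\to0$ using $p_*\OO_E(E)=R^1p_*\OO_E(E)=0$. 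Taken literally, your untwisted identification would give $p_*\OO_E(-E)\cong N^\vee_{C/\PP^3}$, and since $\deg\det N_{C/\PP^3}=4d+2g-2=60$ here, the sequence $(\star)$ would then terminate in the wrong sheaf and the comparison with $H^i(N_{C/\PP^3})$ would fail. Because you independently characterize the right-hand map of $(\star)$ intrinsically — $\pi_*T_X$ is the sheaf of vector fields tangent to $C$, mapped to the normal component along $C$ — the rest of your proof is unaffected once this identification is repaired.
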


\subsection{Hypersurfaces via subvarieties of codimension two.}\label{construction}
The small resolution $X\to Y$ of a linear determinantal quartic threefold, and its relation to an ACM curve, is a special case of the following construction. 

\medskip\noindent
An $(a\times b)$-matrix $M$ of linear forms in $c$ variables is a $\mathbb{C}$-linear map of vector spaces $V_a\otimes V_b\to V_c$, where $V_a,V_b$ and $V_c$ have dimensions $a,b$ and $c$, respectively. Since there is an isomorphism   $$Hom(V_a\otimes V_b, V_c)\cong Hom(V_a,Hom(V_b, V_c)),$$ then $M$ can be thought of as a $(c\times b)$-matrix $N$ of linear forms in $a$ variables. Moreover, $M$ and $N$ have the same rank.

\medskip\noindent
Now suppose that $b=c$ and $a=c+1$, and consider
$$Y:=\{\lambda\in \PP^{c}\ | \ det(N_{\lambda})=0\}\subset \PP^{c},$$ where $N_\lambda$ is the evaluation of $N$ in $\lambda$. In this case we have an incidence variety $X$ defined as follows: 
   \begin{equation*}
\xymatrix{ X\ar[d]&\!\!\!\!\!\!\!\!\!\!\!\!\!=\{(p,\lambda)\in \mathbb{P}^{c-1} \times Y  |\ \mbox{line representing\ }p\subset ker(N_{\lambda})\}\ar[r] & Y\subset \PP^{c} \\
 \PP^{c-1} & &} 
\end{equation*}
If $c=3$, then $Y\subset \PP^3$ is a cubic surface, which is the anticanonical model of $X$, the blow-up of six points on $\PP^2$. The generic $Y$ arises from six points in general position, and if these points specialize to lie on a conic, then the linear system $|-K_X|$ contracts that conic to a double point of $Y$.

\medskip\noindent
The case $c=4$ is what we study in this paper. Here, we show that the generic linear determinantal quartic $Y$ arises as the anticanonical model of the blowup $X=Bl_C\PP^3$, where $C$ is a smooth and irreducible curve of degree and genus $(d,g)=(10,11)$, which is general in its component of the Hilbert scheme (Section \ref{sec::3}). If $C$ specializes to a curve that lies in a smooth cubic surface $S$, then the quartic $Y$ also specializes and acquires an elliptic singularity. In this case, the anticanonical linear system $|-K_X|$, which is base-point free for the general $C$, yields a \textit{rational} contraction of the cubic surface $S$. The curve $C$ can also specialize so that $\OO_C(2)=\omega_C$. In this case, $Y$ specializes to a double quadric with twenty nodes.

\medskip
\subsection{Determinantal quartics}

\noindent
There are five distinct families of determinantal quartics, assuming we restrict to matrices with entries that are homogeneous polynomials of positive degree. The degrees of the corresponding matrices are as follows:
\begin{align*}
    \mathcal{F}:\left(\begin{array}{cccc}1&1&1&1\\1&1&1&1\\1&1&1&1\\1&1&1&1\end{array}\right)\qquad\mathcal{F}_1:\left(\begin{array}{cc}1&3\\1&3\end{array}\right)\qquad\mathcal{F}_2:\left(\begin{array}{cc}1&2\\2&3\end{array}\right)\\
    \mathcal{F}_3:\left(\begin{array}{ccc}1&1&2\\1&1&2\\1&1&2\end{array}\right)\qquad\mathcal{F}_4:\left(\begin{array}{cc}2&2\\2&2\end{array}\right).
\end{align*}
In analogy to \cite[Lem. 4.2]{LLV}, each of these families is characterized by the following property.

\begin{lem}\label{5families}
There are five irreducible families of quartic threefolds, each distinguished by a general element containing the following type of surface:
\begin{itemize}
    \item $\mathcal{F}$ : a degree 6 surface $S$ whose ideal sheaf $\mathcal{I}_S$ admits the resolution 
    $$ 0 \to \mathcal{O}_{\PP^4}(-4)^3 \to \mathcal{O}_{\PP^4}(-3)^4 \to \mathcal{I}_S\to 0. $$
    \item $\mathcal{F}_1$: a 2-plane.
\item $\mathcal{F}_2$ : a quadric.
\item $\mathcal{F}_3$ : a cubic scroll.
\item $\mathcal{F}_4$ : the complete intersection of two quadrics. 
 \end{itemize}
    \end{lem}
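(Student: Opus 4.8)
The plan is to establish a bijection between the five matrix types listed and five irreducible families of quartics, each characterized by the type of surface contained in a general member. The key structural fact is that a determinantal quartic $Y = \{\det(N) = 0\}$ naturally contains a codimension-two subvariety $S \subset \PP^4$ — the locus where the corank of $N$ jumps — and that the degrees of the matrix entries determine the resolution (equivalently, the invariants) of $\mathcal{I}_S$. First I would recall the standard fact (Eagon–Northcott / the structure of determinantal ideals) that for an $(a \times b)$ matrix $N$ of forms defining $Y$, the maximal-minors locus $S$ has a resolution of its ideal sheaf $\mathcal{I}_S$ whose twists are read off directly from the degrees appearing in $N$. For the square $4\times 4$ linear case $\mathcal{F}$, the Eagon–Northcott complex degenerates to the Hilbert–Burch type resolution displayed in the statement, giving the degree-six surface $S$ with resolution $0 \to \mathcal{O}_{\PP^4}(-4)^3 \to \mathcal{O}_{\PP^4}(-3)^4 \to \mathcal{I}_S \to 0$; computing the Hilbert polynomial of this resolution confirms $\deg S = 6$.

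Next I would run the analogous computation for each of the four remaining matrix shapes. For a $2\times 2$ matrix the determinant is a single product-type relation and $S$ is cut out by the $1\times 1$ minors, i.e.\ by the entries; tracking degrees gives: $\mathcal{F}_1$ (entries of degrees $1,3$) a $2$-plane, $\mathcal{F}_2$ (degrees $1,2$ / $2,3$) a quadric, $\mathcal{F}_4$ (all degree $2$) the complete intersection of two quadrics. For the $3\times 3$ case $\mathcal{F}_3$, the $2\times 2$ minors define $S$, and the Hilbert–Burch resolution for a codimension-two Cohen–Macaulay scheme yields the cubic scroll. In each case the degree count must match the requirement that $Y$ itself be a quartic: the determinant of an $(a\times a)$ matrix with column-degrees summing appropriately has degree $4$, which is exactly the constraint that singles out these five shapes among all matrices of positive-degree forms. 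I would verify this degree-four condition combinatorially to confirm the list is complete (no sixth family is possible).

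The heart of the argument — and the main obstacle — is not exhibiting the surface but proving \emph{irreducibility} of each family together with the \emph{characterization}: that a general quartic containing such a surface $S$ is determinantal of the prescribed type, so the five families are genuinely distinct and exhaust the determinantal locus. Following the cited analogy with \cite[Lemma 4.2]{LLV}, I would argue this via the correspondence between the determinantal presentation of $Y$ and the sheafified resolution of $\mathcal{I}_S$: a quartic $Y$ containing a surface $S$ with the stated resolution admits a matrix factorization of its defining equation (this is where Beauville's theory of determinantal hypersurfaces, or the Hilbert–Burch theorem adapted to hypersurfaces, enters), recovering $N$ from $S$. Irreducibility then follows because the family of such surfaces $S$ is irreducible (parametrized by the choice of the resolving map, i.e.\ an open subset of a space of matrices of forms modulo the relevant $\mathrm{GL} \times \mathrm{GL}$ action) and the determinantal quartics form the image of this irreducible parameter space under a morphism. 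The delicate point to check is that distinct matrix shapes cannot yield the same family — i.e.\ that a general member of one family does \emph{not} contain the characterizing surface of another — which I expect to handle by a dimension count comparing $\dim |\mathcal{I}_S(4)|$ across the five cases, showing the generic quartic in each family contains \emph{only} surfaces of its own type.
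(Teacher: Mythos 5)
Your proposal is correct and takes essentially the same route as the paper, which establishes the lemma by citing Beauville's determinantal--hypersurface correspondence \cite[Cor.~1.12]{Beauville} in analogy with \cite[Lemma 4.2]{LLV}: enumerate the degree matrices whose determinant has degree $4$, extract the characteristic surface with its Hilbert--Burch/Eagon--Northcott resolution from the matrix data, recover the determinantal presentation from a general quartic containing such a surface, and deduce irreducibility from the irreducible parameter space of matrices. The only imprecision is describing $S$ as the maximal-minors (corank-jump) locus of $N$ itself; for $\mathcal{F}$ and $\mathcal{F}_3$ it is the maximal-minors locus of a \emph{submatrix} (a $4\times 3$, resp.\ $3\times 2$, block of linear forms, and one column for the $2\times 2$ cases), as the resolutions you correctly compute in fact presuppose.
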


\medskip\noindent
The family of linear determinantal quartics
$$\mathcal{F}=\overline{\{Y=\{det \ A=0\}\subset \PP^4 \ | \ A \mbox{ is a }4\times 4 \mbox{ matrix of linear forms} \}}\subset \PP^{69}$$
is listed in the first bullet, and its characterization will be proved in Lemma \ref{lem::bordiga}.

\medskip\noindent
A general element of $\mathcal{F}_1,\mathcal{F}_2$ and $\mathcal{F}_3$ is nodal and has defect $\rho>0$, but it is irrational \cite{cheltsov, beauville3}. The defect $\rho$ is given by the relation $Cl(Y)\cong\PIC(Y)\oplus \mathbb{Z}^{\raisebox{2pt}{$\scriptstyle\rho$}}$, and if $X$ is a hypersurface in $\PP^4$ with terminal singularities, then it is $\mathbb{Q}$-factorial if and only if it is factorial \cite[pp. 2]{corti&mella}.

\medskip
\section{curves of degree 10 and genus 11}\label{Sec2}

\noindent
This paper analyzes the birational geometry of the blow-up $X:=Bl_C(\PP^3)$, where $C$ is a smooth curve of degree 10 and genus 11. This section describes some geometry these curves can have. 

\medskip\noindent The Hilbert scheme of such curves, denoted by $\mathrm{Hilb}_{10,11}$, has at least three irreducible components parametrizing locally Cohen-Macaulay subschemes. Nonetheless, there is a unique irreducible component of $\mathrm{Hilb}_{10,11}$ whose generic point parametrizes a smooth arithmetically Cohen-Macaulay (ACM) curve. This is a direct consequence of \cite[Thm. 2]{ell}. The ideal sheaf of such an ACM curve $C$ admits the following minimal free resolution:
\begin{equation}\label{eq::genMFR}
    0\to\OOT(-5)^4\to\OOT(-4)^5\to\mathcal{I}_C\to0.
\end{equation}

\medskip\noindent
Recall that a curve $C\subset \PP^3$ is ACM if the restriction map $H^0(\OO_{\PP^3}(k))\to H^0(\mathcal{O}_C(k))$ is onto for any $k$.

\begin{prop}
    There is a unique irreducible component of $\mathrm{Hilb}_{10,11}$, denoted by $\mathcal{H}_{10,11}$, whose general element parametrizes a smooth and irreducible ACM curve.
\end{prop}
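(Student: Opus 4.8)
The plan is to establish existence, irreducibility, and uniqueness separately, drawing on the Ellingsrud-type theory of ACM curves that the excerpt already invokes. First I would verify that a smooth ACM curve of degree $10$ and genus $11$ exists and compute its invariants to confirm it is a legitimate point of $\mathrm{Hilb}_{10,11}$. The standard tool here is the numerical character (or $h$-vector) of an ACM curve in $\PP^3$: the Hilbert function of the homogeneous coordinate ring of $C$ is determined by a minimal free resolution of $\mathcal{I}_C$, and the degree and genus are recovered from the first and second differences of this function. I would pin down the admissible $h$-vector for $(d,g)=(10,11)$ and exhibit the associated resolution; since the linear determinantal construction of Section \ref{construction} produces exactly such a curve as the degeneracy locus of a $4\times 4$ matrix of linear forms, this gives a concrete ACM curve whose ideal has the Eagon-Northcott-type resolution appearing in the first bullet of Lemma \ref{5families}.

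Next I would invoke \cite[Thm. 2]{ell}, which the text cites as the source of the claim. Ellingsrud's theorem asserts that the open subscheme of $\mathrm{Hilb}_{d,g}$ parametrizing ACM curves with a fixed resolution type is smooth and irreducible, and computes its dimension. Concretely, the locus of ACM curves with a given $h$-vector is irreducible because such curves are all obtained, up to the action of $\mathrm{PGL}_4$ and change of bases in the free modules, from a single resolution shape; the space of matrices realizing that resolution is an irreducible (open in an affine) parameter space, and the map sending a matrix to its degeneracy locus has irreducible fibers. Smoothness of the Hilbert scheme at these points follows from the vanishing $H^1(C,N_{C/\PP^3})=0$, which Ellingsrud's computation guarantees for ACM curves. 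I would let $\mathcal{H}_{10,11}$ denote the closure of this smooth irreducible locus; by construction its general member is a smooth irreducible ACM curve.

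For uniqueness, the key point is that the ACM condition forces a single $h$-vector in this numerical range, so there is only one such Ellingsrud component. I would argue that any smooth irreducible ACM curve of degree $10$ and genus $11$ must have the same numerical character: the constraints that the $h$-vector be of \emph{decreasing type} (Gruson-Peskine), together with the fixed values $\deg=10$ and $g=11$, leave no other admissible sequence. Consequently all such curves lie in the closure of the single locus described above, and any component of $\mathrm{Hilb}_{10,11}$ whose general point is a smooth ACM curve must coincide with $\mathcal{H}_{10,11}$. I expect the main obstacle to be precisely this last step: ruling out alternative $h$-vectors that a priori could yield smooth ACM curves of the same degree and genus. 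This is a finite but genuinely combinatorial verification about which numerical characters are geometrically realizable by smooth connected curves, and one must be careful that a candidate $h$-vector is not merely numerically consistent but actually corresponds to a smooth irreducible curve (as opposed to a reducible or singular ACM scheme). Once the admissible character is shown to be unique, irreducibility and uniqueness of the component follow immediately from Ellingsrud's result.
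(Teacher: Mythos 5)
Your proposal is correct and takes essentially the same route as the paper, whose entire proof is the citation of Ellingsrud's theorem \cite{ell}; your elaboration works, and in fact the combinatorial step you flag as the main obstacle is immediate, since $\sum_i h_i = 10$, $\sum_i i\,h_i = d+g-1 = 20$ and $h_i \le i+1$ already force the unique $h$-vector $(1,2,3,4)$ without invoking decreasing type, because $(1,2,3,4)$ is the unique minimizer of the weighted sum. One minor slip worth correcting: the curve $C\subset\PP^3$ is the degeneracy locus of a $4\times 5$ (not $4\times 4$) matrix of linear forms, with Hilbert--Burch resolution $0\to\OO_{\PP^3}(-5)^4\to\OO_{\PP^3}(-4)^5\to\mathcal{I}_C\to 0$, while the $4\times 4$ matrix of linear forms lives on $\PP^4$ and cuts out the quartic $Y$; likewise the resolution in the first bullet of Lemma \ref{5families} is that of the degree-six surface in $\PP^4$, not of $C$.
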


\medskip\noindent 
Let $\mathcal{H}_{10,11}^\circ$ denote the open set of $\mathcal{H}_{10,11}$ parametrizing smooth and irreducible ACM curves.  An important invariant of a curve $C\subset\PP^3$, called the \textit{Hartshorne-Rao module}, provides a tool to study the complement of $\mathcal{H}_{10,11}^{\circ}$. This is the second time that this module has been used for this purpose \cite{vite}.

\begin{definition}
    Let $C\subset\PP^3$ be a curve. Its \textit{Hartshorne-Rao module} is defined as
    $$HR(C):=\bigoplus_{k\in\mathbb{Z}}H^1(\mathcal{I}_C(k)).$$
\end{definition}

\medskip\noindent 
Note that $HR(C)$ is a graded module of finite length. A curve $C$ is ACM if and only if $HR(C)=0$. Let us focus on two families of curves in $\mathcal{H}_{10,11}$ for which $HR(C) \ne 0$.  

\begin{definition}\label{def::typesOfCurves}
    The family of smooth curves $C\in\mathcal{H}_{10,11}$ with $\OO_C(2)=\omega_C$ is denoted $D_1^\circ$; we call these curves \textit{semicanonical}. Similarly, the family of smooth curves contained in a smooth cubic surface is denoted by $D_2^\circ$. 
\end{definition}

\medskip\noindent
Let $D_i$ stands for the closure of $D_i^\circ$ in $\mathcal{H}_{10,11}$. If $C_1\in D_1^\circ$ and $C_2\in D_2^\circ$ are general, then $H^1(\PP^3,\mathcal{I}_{C_k}(m))=0$, except in the following two cases:
\begin{align}\label{eq::rank1HR}
    h^1(\PP^3,\mathcal{I}_{C_1}(2))=1,\qquad h^1(\PP^3,\mathcal{I}_{C_2}(3))=1.
\end{align}

 \medskip\noindent
 In other words, $HR(C_1)\cong\mathbb{K}(-2)$ and $HR(C_2)\cong\mathbb{K}(-3)$, where $\mathbb{K}(-a)$ denotes the unique graded module isomorphic to $\mathbb{K}$ in degree $a$, and zero in every other degree. Smooth ACM curves together with the curves from Definition \ref{def::typesOfCurves} describe \textit{all} the smooth irreducible curves of degree 10 and genus 11 in $\PP^3$. To establish this, we examine the residual curve to a general $C\in \mathcal{H}_{10,11}$ in a complete intersection of two quartics. The Hartshorne-Rao modules of $C$ and its residual curve $C'$ satisfy the following relation.

\begin{prop}\label{prop::formulaEstrellita}\cite[Prop 1.13]{vite}
    Let $C\subset\PP^3$ be a locally Cohen-Macaulay curve of degree 10 and genus 11, and let $C'$ be the residual curve in a complete intersection of two surfaces of degree 4, at leas one of which is smooth. Then
\begin{equation*}
    h^{0}(\PP^{3},\mathcal{I}_{C}(m))=h^{1}(\PP^{3},\mathcal{I}_{C^{\prime}}(4-m))-h^{0}(\PP^{3},\mathcal{I}_{C^{\prime}}(4-m))+\frac{(m-3)(m-2)(m+2)}{6}
    \end{equation*}
    for all $m$.
\end{prop}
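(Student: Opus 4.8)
The plan is to deduce this liaison formula from the standard exact sequences governing linkage of codimension-two subschemes in $\PP^3$. Let $Z = S_1 \cap S_2$ be the complete intersection of the two quartics, where $S_1$ is smooth, so that $C$ and $C'$ are linked via $Z$. The key algebraic input is the mapping cone / liaison exact sequence relating the ideal sheaves of linked curves. Concretely, since $C \cup C' = Z$ is a complete intersection of two quartics, there is an exact sequence
\begin{equation*}
0 \to \mathcal{I}_Z \to \mathcal{I}_C \to \omega_{C'} \otimes \mathcal{O}_{\PP^3}(4-4-\text{(adjustment)}) \to 0,
\end{equation*}
or more usefully for cohomology-counting, the sequence relating $\mathcal{I}_{C'}$ to the dual of $\mathcal{I}_C$ after the appropriate Serre-duality twist. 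First I would fix the precise liaison normalization: for linkage in a complete intersection of surfaces of degrees $a=b=4$, the relevant twist is by $\mathcal{O}(a+b-4) = \mathcal{O}(4)$, so the comparison should pair $\mathcal{I}_C(m)$ against $\mathcal{I}_{C'}(4-m)$, which matches the $4-m$ appearing in the statement.

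The main computational steps I would carry out are the following. First, write the complete-intersection Koszul resolution of $\mathcal{I}_Z$:
\begin{equation*}
0 \to \mathcal{O}_{\PP^3}(-8) \to \mathcal{O}_{\PP^3}(-4)^2 \to \mathcal{I}_Z \to 0,
\end{equation*}
from which $h^i(\mathcal{I}_Z(m))$ and in particular $h^0(\mathcal{I}_Z(m))$ are explicit. Second, use the liaison exact sequence $0 \to \mathcal{I}_Z \to \mathcal{I}_C \to \mathcal{H}om(\mathcal{O}_{C'}, \omega_{Z})\text{-type sheaf} \to 0$; taking cohomology, the term $h^0(\mathcal{I}_C(m))$ gets expressed as $h^0(\mathcal{I}_Z(m))$ plus a contribution from the residual, which by Serre duality on $C'$ (using $\omega_{C'} = \omega_Z|_{C'} = \mathcal{O}_{C'}(4+4-4) = \mathcal{O}_{C'}(4)$) turns into $h^1(\mathcal{I}_{C'}(4-m)) - h^0(\mathcal{I}_{C'}(4-m))$. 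Third, I would collect the purely numerical leftover terms --- the contributions of $h^\bullet(\mathcal{I}_Z)$ and the Euler characteristics --- and check they sum to the polynomial $\tfrac{(m-3)(m-2)(m+2)}{6}$. This last equality is a Riemann--Roch bookkeeping computation: since $C$ has degree $10$ and genus $11$, one has $\chi(\mathcal{I}_C(m)) = \binom{m+3}{3} - (10m+1-11) = \binom{m+3}{3} - 10m + 10$, and one verifies that after subtracting the $h^1$ and $h^2$ corrections absorbed into the $C'$ terms, the residual polynomial collapses to the stated cubic.

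The hard part, and the step requiring the most care, is fixing the correct twists and signs in the liaison sequence so that the $h^1 - h^0$ combination for $C'$ appears with exactly the right shift $4-m$ and no stray twist. The subtlety is that Serre duality introduces $\omega_{\PP^3} = \mathcal{O}(-4)$ together with the dualizing sheaf of the complete intersection $\omega_Z = \mathcal{O}_Z(4)$, and it is easy to be off by a twist of $\mathcal{O}(\pm 4)$; the smoothness hypothesis on $S_1$ is what guarantees $Z$ is a genuine arithmetically Gorenstein scheme with $C'$ the honest residual, so that the duality $H^1(\mathcal{I}_{C'}(4-m))^\vee \cong H^1(\omega_{C'}\text{-related})$ is clean. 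I would verify the normalization by testing the formula against a single value of $m$ where all terms are independently computable --- for instance $m=3$ (the right-hand polynomial vanishes there) or $m=4$ --- using that $h^0(\mathcal{I}_C(3))$ and $h^0(\mathcal{I}_C(4))$ are known for an ACM curve of degree $10$, genus $11$. Since the statement is quoted from \cite[Prop.~1.13]{vite}, I expect the proof to reduce to precisely this liaison computation, and the remaining work is purely a matter of assembling the exact sequences and confirming the Euler-characteristic arithmetic.
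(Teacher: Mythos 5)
Your outline is the right one, and it is presumably the argument of the cited source: the paper itself gives no proof of this proposition, quoting it from \cite[Prop.~1.13]{vite}, so the standard liaison computation you describe --- the sequence $0\to\mathcal{I}_Z\to\mathcal{I}_C\to\omega_{C'}(-4)\to0$ with $\omega_Z=\mathcal{O}_Z(4)$, the Koszul resolution $0\to\mathcal{O}_{\PP^3}(-8)\to\mathcal{O}_{\PP^3}(-4)^2\to\mathcal{I}_Z\to0$, Serre duality on $C'$, and Euler-characteristic bookkeeping --- is exactly what is needed, and your normalization (the twist $4-m$) is correct. One step is stated loosely: Serre duality by itself gives $h^0(\omega_{C'}(m-4))=h^1(\mathcal{O}_{C'}(4-m))=h^2(\mathcal{I}_{C'}(4-m))$; the combination $h^1(\mathcal{I}_{C'}(4-m))-h^0(\mathcal{I}_{C'}(4-m))$ appears only after you trade $h^2$ for $\chi(\mathcal{I}_{C'}(4-m))$ and cancel $h^3(\mathcal{I}_{C'}(4-m))=h^0(\mathcal{O}_{\PP^3}(m-8))$ against the term $-h^0(\mathcal{O}_{\PP^3}(m-8))$ coming from $h^0(\mathcal{I}_Z(m))=2h^0(\mathcal{O}_{\PP^3}(m-4))-h^0(\mathcal{O}_{\PP^3}(m-8))$. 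You fold this into your third step, so the plan is sound.

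However, the final claim of your step three --- that the leftovers ``collapse to the stated cubic'' --- is false as written, because the cubic as printed is a mis-transcription. Carrying your plan through, for $m\ge1$ the numerical leftover is $\chi(\mathcal{I}_{C'}(4-m))+2h^0(\mathcal{O}_{\PP^3}(m-4))=\chi(\mathcal{I}_C(m))=\binom{m+3}{3}-10m+10=\frac{(m-3)(m-2)(m+11)}{6}$, not $\frac{(m-3)(m-2)(m+2)}{6}$. Your own proposed sanity check at $m=4$ detects this: for a general ACM $C$ one has $h^0(\mathcal{I}_C(4))=5$ (the five maximal minors of the $5\times4$ matrix of linear forms; equivalently $|-K_X|\cong\PP^4$ in Proposition~\ref{lem4IsInjective}), while the residual $C'$ is then ACM, so $h^1(\mathcal{I}_{C'}(0))=h^0(\mathcal{I}_{C'}(0))=0$ and the printed right-hand side gives $2$. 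There is also an a priori reason: for $m\ge1$ the cubic is forced to be $\chi(\mathcal{I}_C(m))$, whose $m^2$-coefficient is $+1$, whereas $(m-3)(m-2)(m+2)/6$ has $m^2$-coefficient $-\tfrac{1}{2}$, so it cannot be the Euler characteristic of any twisted ideal sheaf. Fortunately both cubics vanish at $m=2$ and $m=3$, so the paper's only application of the formula (at $m=2$, in Theorem~\ref{prop::smoothCurves}) is unaffected. Finally, ``for all $m$'' should be read as $m\ge1$: for $m\le0$ the terms $h^0(\mathcal{O}_{\PP^3}(m-4))$ and $h^3(\mathcal{O}_{\PP^3}(m))$ depart from their polynomial extrapolations (at $m=0$ in the ACM case even the corrected right-hand side equals $0-13+11=-2$, while the left-hand side is $0$).
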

\medskip\noindent 
The residual curve $C'$ has degree 6 and genus 3, and there are only five possibilities for $HR(C')$.
\begin{theorem}\label{thm::HR63}\cite[Thm. 1]{am1} 
    Let $C'\subset\PP^3$ be a locally Cohen-Macaulay curve of degree 6 and genus 3. Then $HR(C')$ is isomorphic to one of the following graded modules: 
    \begin{enumerate}
        \item The zero module,
        \item $\mathbb{K}(-2)$,
        \item $\mathbb{K}(-3)$,
        \item  $\mathbb{K}[x,y,z,w](-1)/\langle x,y,z,w^{3}\rangle$,
        \item $\mathbb{K}[x,y,z,w](-2)/\langle x,y,F,G\rangle$ with $F,G$ polynomials of degree 3 and 7 respectively.
    \end{enumerate}
\end{theorem}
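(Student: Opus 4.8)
The plan is to study $M := HR(C')=\bigoplus_{k}H^{1}(\mathcal{I}_{C'}(k))$ as a graded module of finite length over $S=\mathbb{K}[x,y,z,w]$, and to reconstruct it in two stages: first its graded dimensions $\dim_{\mathbb K}M_{k}=h^{1}(\mathcal{I}_{C'}(k))$, and then the $S$-module structure given by multiplication by linear forms. The numerical input comes from the sequence $0\to\mathcal{I}_{C'}(k)\to\mathcal{O}_{\PP^{3}}(k)\to\mathcal{O}_{C'}(k)\to 0$ together with the vanishing of the intermediate cohomology of $\PP^{3}$ and Serre duality on the Cohen--Macaulay curve $C'$, which give
$$\dim_{\mathbb K}M_{k}=h^{0}(\mathcal{I}_{C'}(k))+h^{0}(\omega_{C'}(-k))+(6k-2)-\binom{k+3}{3}.$$
Since $\deg\omega_{C'}=2g-2=4$, the term $h^{0}(\omega_{C'}(-k))$ vanishes for $k\geq 1$ whenever $C'$ has no component forcing sections in negative degree, so the graded dimensions are governed by the postulation $k\mapsto h^{0}(\mathcal{I}_{C'}(k))$, i.e.\ by the least degree $s$ of a surface containing $C'$ and the Hilbert function thereafter. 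I would first use this to confine $M$ to a short window of degrees in the non-degenerate cases, and to read off the shift in each case.

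The organizing principle for the full classification is liaison. By Rao's theorem the isomorphism class of $M$ up to shift is exactly the even liaison invariant of $C'$, and for a direct link of $C'$ to a curve $D$ in a complete intersection of surfaces of degrees $f_{1},f_{2}$ one has the graded duality $M(D)\cong M(C')^{\vee}(f_{1}+f_{2}-4)$. Thus classifying the possible $M$ amounts to determining which even liaison classes are realized by a locally Cohen--Macaulay curve of degree $6$ and genus $3$, and with which shift. I would enumerate these by producing, in each class, a minimal curve and recovering $C'$ from it by elementary biliaison (adding surface sections); the bound $d=6,\ g=3$ limits the number of such steps and hence the size of $M$, which is what ultimately forces a finite list.

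Concretely, I expect the reduced irreducible curves to account for the short modules: a nondegenerate $C'$ lies either on a quadric or only on cubics, and the Euler-characteristic bookkeeping above together with Serre duality pins $\dim M_{k}$ down to a single nonzero value in these cases (for instance a $(2,4)$-curve on a smooth quadric gives $M\cong\mathbb K$ concentrated in one degree). The zero module is the ACM case. The remaining, higher-length modules should arise only from degenerate or non-reduced structures---multiplicity structures supported on a line---whose Rao modules I would compute either from an explicit minimal free resolution, the annihilator being forced by the linear support of $\widetilde M$, or by linking to a curve of much smaller degree and transporting the answer through the duality formula above. Matching graded dimensions and module structure then yields the five listed modules, and the exact shifts are fixed by the postulation computation of the first paragraph.

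The main obstacle is precisely the degenerate and non-reduced curves. For smooth irreducible $C'$ the numerics essentially finish the job, but ``locally Cohen--Macaulay'' admits reducible curves and multiplicity structures (ribbons and thicker quasi-primitive structures on lines and conics) of degree $6$ and arithmetic genus $3$, and the genuine content is to enumerate exactly these, to verify that they realize the two larger modules and no others, and to compute the full module structures in cases (4) and (5) rather than merely their Hilbert functions. Controlling this zoo of thickened curves---and confirming that no further even liaison class intrudes in degree $6$, genus $3$---is where I would expect to spend the bulk of the effort.
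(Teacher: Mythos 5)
A preliminary remark on the comparison itself: the paper does not prove this statement — Theorem \ref{thm::HR63} is imported verbatim from Amrane \cite{am1} — so your proposal can only be measured against that cited classification. Your framework is in fact the right one and close in spirit to the source: the graded-dimension formula you derive from the restriction sequence and Serre duality on a locally Cohen--Macaulay curve is correct, and organizing the classification by Rao modules, liaison duality, and minimal curves/biliaison is essentially the Martin-Deschamps--Perrin machinery on which the degree-6, genus-3 classification rests.

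The gap is that what you have is a program, not a proof, and the part you defer in your final paragraph \emph{is} the theorem. Nothing in your sketch bounds the length or the diameter of $M$: the claim that ``$d=6$, $g=3$ limits the number of biliaison steps and hence the size of $M$'' is asserted without argument, and it had better not follow from soft numerics alone, since the module in case (5) has length $21$ spread over nine degrees — far outside the ``short window'' your first-paragraph bookkeeping suggests. The genuine content is the enumeration of all locally Cohen--Macaulay curves of degree $6$ and arithmetic genus $3$ (ribbons and quasi-primitive multiplicity structures on lines and conics, reducible combinations), the computation of their Rao modules \emph{as modules}, and the proof that no sixth isomorphism class occurs; none of these steps is carried out or reduced to a citable result. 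There is also a concrete slip you would trip over at once: for curves linked by a complete intersection of type $(f_1,f_2)$ the duality reads $M(D)\cong M(C')^\vee(4-f_1-f_2)$, not $M(C')^\vee(f_1+f_2-4)$. Test it on two skew lines (Rao module $\mathbb{K}$ in degree $0$) linked by a $(2,3)$ to a rational quartic, whose Rao module is $\mathbb{K}$ in degree $+1$, not $-1$; since you propose to transport the modules of the thickened curves through this formula, the wrong twist would misplace every shift in cases (4) and (5). Finally, your assertion that reduced irreducible curves account for all the one-dimensional modules needs care about shifts before it can be made: under the grading convention of \eqref{eq::rank1HR}, a one-dimensional module concentrated in a negative degree forces $H^1(\mathcal{I}_{C'}(k))\cong H^0(\mathcal{O}_{C'}(k))\neq 0$ for some $k<0$, which only a non-reduced curve can achieve — so even the ``easy'' entries of the list require the exact shift to be pinned down before one knows which kind of curve realizes them.
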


\medskip\noindent
We now prove that only the first three of these five options can occur as the Hartshorne-Rao module of residual curves to a smooth $C\in \mathcal{H}_{10,11}$, each corresponding to $C$ lying in $\mathcal{H}_{10,11}^\circ$, $D_1^\circ$ or $D_2^\circ$.

\begin{theorem}\label{prop::smoothCurves}
    Let $C$ be a smooth irreducible curve of degree $10$ and genus $11$. Then $C$ lies in the union $\mathcal{H}_{10,11}^{\circ}\cup D_1^{\circ}\cup D_2^{\circ}$.
\end{theorem}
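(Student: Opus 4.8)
The plan is to classify all smooth irreducible curves $C$ of degree $10$ and genus $11$ by studying the Hartshorne--Rao module of the residual curve $C'$ in a complete intersection of two quartics. By Theorem \ref{thm::HR63}, $HR(C')$ is one of five graded modules, so it suffices to rule out options (4) and (5) and to match each of the surviving options (1), (2), (3) with membership in $\mathcal{H}_{10,11}^{\circ}$, $D_1^{\circ}$, or $D_2^{\circ}$, respectively. First I would fix a general smooth $C$ together with two quartics $F_1, F_2$ containing it (these exist since $h^0(\mathcal{I}_C(4))$ is positive, as one checks from the expected dimension count), with at least one of them smooth, so that the residual curve $C'$ of degree $6$ and genus $3$ is well-defined and Proposition \ref{prop::formulaEstrellita} applies.

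\smallskip

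\textbf{Eliminating the forbidden modules.}
The heart of the argument is to extract numerical constraints from Proposition \ref{prop::formulaEstrellita} and compare them against the five candidate modules. For each candidate I would compute the graded pieces $h^1(\mathcal{I}_{C'}(4-m))$ and $h^0(\mathcal{I}_{C'}(4-m))$ directly from the description of $HR(C')$ given in Theorem \ref{thm::HR63}, plug these into the displayed formula, and thereby determine the Hilbert function $m \mapsto h^0(\mathcal{I}_C(m))$ forced on $C$. The point is that options (4) and (5) correspond to modules of length strictly greater than $1$ (option (4) has pieces in two consecutive degrees, and option (5) likewise spans several degrees), and I expect these to force values of $h^0(\mathcal{I}_C(m))$ that are incompatible with $C$ being smooth and irreducible of the stated degree and genus --- for instance, by forcing $C$ to lie on a plane, a line, or a surface of too-low degree, or by producing a negative or otherwise impossible dimension. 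Concretely, a module supported in several degrees translates, through the formula, into too many low-degree generators of $\mathcal{I}_{C'}$, which via liaison constrains $C$ to sit on a reducible or non-reduced surface, contradicting smoothness and irreducibility.

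\smallskip

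\textbf{Matching the survivors.}
For the three admissible modules I would identify the geometric meaning of each. The zero module is exactly the ACM condition, placing $C$ in $\mathcal{H}_{10,11}^{\circ}$. For $HR(C') \cong \mathbb{K}(-1)$ one computes, again via the formula, that $h^0(\mathcal{I}_C(3)) = 1$, so $C$ lies on a (necessarily smooth, for general such $C$) cubic surface, i.e.\ $C \in D_2^{\circ}$; this matches the degree shift recorded in \eqref{eq::rank1HR}, where $HR(C_2)\cong \mathbb{K}(-3)$ pairs by liaison with $HR(C')\cong \mathbb{K}(-1)$ since the quartics have degree $4$ and $4-3 = 1$. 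For $HR(C') \cong \mathbb{K}(2)$ the formula should yield the numerical signature of a semicanonical curve, namely $\omega_C = \mathcal{O}_C(2)$, placing $C \in D_1^{\circ}$ and matching $HR(C_1)\cong \mathbb{K}(-2)$.

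\smallskip

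\textbf{Main obstacle.}
The main difficulty will be the exclusion of options (4) and (5): these modules are not simple one-dimensional pieces, so the bookkeeping in Proposition \ref{prop::formulaEstrellita} across all relevant $m$ is more delicate, and one must argue that the resulting Hilbert function genuinely cannot be realized by a smooth irreducible curve (as opposed to merely a locally Cohen--Macaulay one, which the module classification does allow). I would handle this by translating the forced Hilbert function into an explicit geometric obstruction --- either a surface of impossibly low degree containing $C$, or a violation of the genus formula for a smooth curve on the minimal containing surface --- and invoke smoothness to rule it out. A secondary point requiring care is the genericity hypothesis: I must ensure the residual construction is available for \emph{every} smooth $C$, not just the generic one, which means verifying that a smooth quartic through $C$ always exists; this follows once $h^0(\mathcal{I}_C(4))$ is shown to be large enough that its general member is smooth, via a Bertini-type argument away from the base locus.
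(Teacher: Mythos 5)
Your proposal follows essentially the same route as the paper: link $C$ to a residual sextic $C'$ of genus $3$ via a complete intersection of two quartics (one smooth), invoke the classification of $HR(C')$ in Theorem \ref{thm::HR63}, eliminate cases (4) and (5) through Proposition \ref{prop::formulaEstrellita}, and identify the surviving cases (1), (2), (3) with $\mathcal{H}_{10,11}^{\circ}$, $D_1^{\circ}$, $D_2^{\circ}$. The paper executes this more crisply than your sketch anticipates --- a single evaluation of the formula at $m=2$ suffices to kill cases (4) and (5), giving $h^0(\mathcal{I}_C(2))\leq -4$ or $\geq 2$, both absurd for a smooth irreducible curve of degree $10$ --- and the matching of the three remaining modules to the three loci (where your derivation of the semicanonical case from the Hilbert function alone is the vaguest step) is handled by citing \cite[Thm 4.1]{vite} rather than re-derived.
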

\begin{proof}
    Let $C\in\mathrm{Hilb}_{10,11}$ be smooth and irreducible curve, and let $C'$ be the residual curve to $C$ in a complete intersection of two quartic surfaces, at least one of which is smooth. Suppose that $HR(C')$ is as in item (4) or (5) in Theorem \ref{thm::HR63} above. By Proposition \ref{prop::formulaEstrellita} with $m=2$, it follows that
    $$h^{0}(\PP^{3},\mathcal{I}_{C}(2))\leq-4 \quad \mbox{or}\quad \geq 2,$$ depending on wether $HR(C')$ is as in case (4) or (5), respectively. Note both cases are impossible.

    Therefore, $HR(C')$ falls into one of cases (1), (2) or (3) in Theorem \ref{thm::HR63}. By \cite[Thm. 4.1]{vite}, this is equivalent to $C$ belonging to $\mathcal{H}_{10,11}^\circ, D_1^\circ$ or $D_2^\circ$, respectively.\\
\end{proof}

\medskip\noindent
The following proposition guarantees that the specializations in Corollary \ref{mockcorollary::introCor} can be taken over an irreducible base.

\begin{prop}\cite{vite}\label{toolvite}
$D_1$ and $D_2$ are contained in $\mathcal{H}_{10,11}$ and each forms an irreducible divisor. 
\end{prop}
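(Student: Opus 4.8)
The plan is to prove Proposition \ref{toolvite} in two parts: first establish that $D_1$ and $D_2$ lie inside the principal component $\mathcal{H}_{10,11}$, and then show each is an irreducible divisor therein. For the containment, the key observation is that a general semicanonical curve $C_1 \in D_1^\circ$ and a general curve $C_2 \in D_2^\circ$ on a smooth cubic surface are both smooth, irreducible, and satisfy $\mathrm{length}(HR(C_i)) = 1$ (cf. \eqref{eq::rank1HR}). Since the generic element of $\mathcal{H}_{10,11}$ is ACM (hence $HR = 0$), the strategy is to exhibit $D_i^\circ$ as a limit of ACM curves: I would argue that a curve with one-dimensional Hartshorne-Rao module deforms to an ACM curve inside $\mathrm{Hilb}_{10,11}$, so that $D_i^\circ$ is contained in the closure of $\mathcal{H}_{10,11}^\circ$. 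Concretely, I expect to invoke the semicontinuity of $h^1(\mathcal{I}_C(k))$ together with the unobstructedness / smoothness of the ambient component to produce a flat family connecting a general $C_i$ to an ACM curve; this realizes the inclusion $D_i \subset \mathcal{H}_{10,11}$.

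For irreducibility and the divisor claim, I would compute the dimensions directly. The dimension of $\mathcal{H}_{10,11}$ is $4d = 40$ (the expected dimension of the Hilbert scheme of curves in $\PP^3$, which holds on the principal component). Then I would bound $\dim D_1$ and $\dim D_2$ and check each equals $39$. For $D_2^\circ$, the curves lie on a smooth cubic surface $S \subset \PP^3$: I parametrize by the $\dim |\mathcal{O}_{\PP^3}(3)| = 19$-dimensional space of cubics, and on a fixed smooth cubic $S$ (a del Pezzo of degree $3$) the curves of degree $10$ and genus $11$ form a linear system whose dimension I compute from the intersection theory on $S$ via Riemann–Roch; adding these gives $\dim D_2 = 39$. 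For $D_1^\circ$, the semicanonical condition $\mathcal{O}_C(2) = \omega_C$ is one theta-characteristic constraint, cutting a divisorial locus; I would show it drops the dimension by exactly one, again landing at $39$. Irreducibility of each $D_i$ should follow from irreducibility of the parametrizing data: for $D_2$ from the irreducibility of the space of smooth cubic surfaces together with the irreducibility of the relevant linear systems on a fixed $S$, and for $D_1$ from the monodromy / connectedness of the semicanonical (theta-characteristic) locus on the family of genus-$11$ curves.

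The main obstacle I anticipate is the dimension count for $D_2$ on the cubic surface, because it requires identifying the numerical class of $C$ in $\mathrm{Pic}(S)$ and verifying that a general such curve is smooth, irreducible, and genuinely of genus $11$ with the predicted $HR$; one must rule out that the linear system is non-reduced or that its general member degenerates. A secondary subtlety is making the limiting argument for containment rigorous, namely producing an actual flat deformation from $C_i$ to an ACM curve rather than merely matching Hilbert polynomials — here I would lean on the structure of the Hartshorne-Rao module established in \cite{vite} (to which the proposition is attributed) and on the fact that, by Theorem \ref{prop::smoothCurves}, the only smooth curves are the three families $\mathcal{H}_{10,11}^\circ, D_1^\circ, D_2^\circ$, so no other component can intervene. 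Since the result is cited from \cite{vite}, I would expect the cleanest route to reference those computations and supply only the geometric dimension bookkeeping needed to conclude that each $D_i$ is an irreducible divisor.
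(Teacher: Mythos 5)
The paper offers no internal proof here: Proposition \ref{toolvite} is cited from \cite{vite}, where the actual method is liaison — linking $C$ by a complete intersection of two quartics to a degree-$6$, genus-$3$ curve, so that $D_1$ and $D_2$ correspond to the two known irreducible non-ACM divisors in $\mathcal{H}_{3,6}$ (cases (2) and (3) of Theorem \ref{thm::HR63}; cf.\ the Remark following Lemma \ref{lem::genInDegX}). Measured against that, your sketch has two genuine gaps. First, your containment argument runs semicontinuity in the wrong direction: semicontinuity of $h^1(\mathcal{I}_C(k))$ shows the non-ACM locus is \emph{closed}, and can never produce a flat deformation from $C_i$ to an ACM curve. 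What is actually needed is unobstructedness at a \emph{general point of $D_i^\circ$}, i.e.\ $h^1(N_{C/\PP^3})=0$, so that $\mathrm{Hilb}_{10,11}$ is smooth of dimension $\chi(N_{C/\PP^3})=4d=40$ there; only then does your appeal to Theorem \ref{prop::smoothCurves} close the loop (the unique $40$-dimensional component through $C_i$ has smooth generic member, which cannot lie in the $39$-dimensional loci $D_1^\circ\cup D_2^\circ$, hence is ACM). You assert this unobstructedness but do not prove it, and it is not free: for $C\in D_2^\circ$ it does follow from $0\to \mathcal{O}_C(C)\to N_{C/\PP^3}\to \mathcal{O}_C(3)\to 0$ on the cubic, since both outer bundles have degree $30>2g-2=20$; but for $C\in D_1^\circ$ the analogous sequence on a quartic K3 has $N_{C/F}\cong\omega_C$, so $h^1(N_{C/F})=1$ and the vanishing requires a further argument.

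Second, the $D_1$ dimension count is based on a false principle. The semicanonical condition is the triviality of the degree-zero line bundle $\mathcal{O}_C(2)\otimes\omega_C^{-1}$, which is a priori up to $g=11$ conditions, not ``one theta-characteristic constraint''; that $D_1$ is a divisor is exactly the nontrivial claim, not an input. The correct bookkeeping goes through theta characteristics: $\mathcal{O}_C(1)$ is a degree-$10=g-1$ theta characteristic with $h^0\geq 4$, and the locus in $\mathcal{M}_{11}$ of curves carrying such a $\theta$ has codimension $\binom{4}{2}=6$ (Harris), giving $\dim D_1=(30-6)+\dim \mathrm{PGL}_4=24+15=39$ — a different computation from the one you propose. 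By contrast, your $D_2$ count is essentially sound: with $[C]=12\ell-5e_1-5e_2-4\sum_{i\geq 3}e_i$ as in Lemma \ref{secantes}, Riemann--Roch on $S$ gives $C^2=30$, $-K_S\cdot C=10$, hence $\dim|C|=20$ once $h^1(\mathcal{O}_S(C))=0$ is checked, and $19+20=39$; irreducibility then follows from the transitive $W(E_6)$-monodromy on markings of the universal cubic, as you indicate.
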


\medskip\noindent
The following lemma follows from \cite[Cor. 4.3 \& Cor. 4.6]{vite}. 
\begin{lem}\label{lem::genInDegX}
    Let $C\in D_1$ (respectively, $C\in D_2$) be general. Then $\mathcal{I}_C$ is generated in degree 4 (respectively, degree 5).
\end{lem}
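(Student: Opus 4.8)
The plan is to determine the minimal graded free resolution of the saturated ideal $I_C=\bigoplus_{m}H^0(\mathcal I_C(m))\subset S$, where $S$ is the homogeneous coordinate ring of $\PP^3$, since the generation degree is the top degree in which a minimal generator occurs. I will use the standard reduction: $I_C$ is generated in degrees $\le d$ precisely when the multiplication maps $H^0(\OO_{\PP^3}(1))\otimes H^0(\mathcal I_C(m))\to H^0(\mathcal I_C(m+1))$ are surjective for every $m\ge d$, together with the fact that an $r$-regular sheaf has its sections generated in degrees $\le r$. So the first task is to compute the Castelnuovo--Mumford regularity of $\mathcal I_C$ in each family.

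For this I compute the cohomology of the twists. Lemma \ref{lem::chi} with $k=1$ gives $\chi(\mathcal I_C(m))=\chi(X,mH-E)=\binom{m+3}{3}-10m+10$. The group $H^1(\mathcal I_C(m))$ is the degree-$m$ piece of $HR(C)$, which by \eqref{eq::rank1HR} is one-dimensional in degree $2$ for $C\in D_1$, one-dimensional in degree $3$ for $C\in D_2$, and zero otherwise. Finally the ideal sheaf sequence and Serre duality on $C$ give $H^2(\mathcal I_C(m))\cong H^1(\OO_C(m))\cong H^0(\omega_C(-m))^{\vee}$. For $C\in D_1$ this equals $H^0(\OO_C(2-m))^{\vee}$ since $\omega_C=\OO_C(2)$, so it is nonzero exactly for $m\le 2$; for $C\in D_2$ the inequality $\deg\OO_C(m)=10m>2g-2=20$ for $m\ge 3$ together with $\omega_C\neq\OO_C(2)$ forces $H^2(\mathcal I_C(m))=0$ for $m\ge 2$. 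Substituting into the regularity criterion gives $\operatorname{reg}\mathcal I_C=5$ in both cases, and the resulting Hilbert function is $h^0(\mathcal I_C(m))=0$ for $m\le 3$, $=5$ for $m=4$, $=16$ for $m=5$ when $C\in D_1$, and $=0,0,1,5,16$ for $m=1,2,3,4,5$ when $C\in D_2$. For $C\in D_2$ this already proves the lemma: $5$-regularity forces all minimal generators into degrees $\le 5$ (and the Hilbert function shows that degree $5$ is attained).

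The case $C\in D_1$ is the heart of the matter, because regularity only yields generation in degrees $\le 5$ while we must descend to degree $4$; note that $H^2(\mathcal I_C(2))\neq 0$ obstructs $4$-regularity, so the naive bound is not sharp. Concretely I must show there is no minimal generator in degree $5$, i.e. $\beta_{0,5}(I_C)=0$, equivalently that the map $H^0(\OO_{\PP^3}(1))\otimes H^0(\mathcal I_C(4))\to H^0(\mathcal I_C(5))$ from a $20$-dimensional space onto a $16$-dimensional one is surjective---that is, the five quartics through $C$ carry exactly a $4$-dimensional space of linear syzygies. I would prove this in one of two ways. The intrinsic route uses the Rao correspondence: since $HR(C)=\mathbb K(-2)$, its minimal free resolution is the shifted Koszul complex \[0\to S(-6)\to S(-5)^4\to S(-4)^6\to S(-3)^4\to S(-2)\to \mathbb K(-2)\to 0,\] and the Martin-Deschamps--Perrin structure theorem builds the minimal free resolution of $I_C$ from it as a mapping cone; one then checks that the degree-$5$ strand contributes only first syzygies, not generators. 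The concrete route exhibits a single smooth semicanonical curve $C_0\in D_1$ with $\beta_{0,5}(I_{C_0})=0$ and concludes for the general member by upper semicontinuity of the graded Betti numbers over the irreducible divisor $D_1$ (Proposition \ref{toolvite}).

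The main obstacle is exactly this vanishing $\beta_{0,5}=0$ for general $C\in D_1$. It is a maximal-rank statement for the multiplication map above that genuinely depends on the general position of $C$ and is invisible to numerics: the numerator of the Hilbert series of $I_C$ equals $5t^4-4t^5$, so a putative degree-$5$ generator would cancel against the Rao-module syzygies and leave no trace in the Hilbert function. I expect the semicontinuity route to be the most economical, reducing the whole statement to one explicit semicanonical example, while the resolution-theoretic route---the one pursued in \cite{vite}---is more conceptual and is what makes the cancellation pattern transparent.
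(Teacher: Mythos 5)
You cannot be faulted for not matching the paper's argument, because the paper has none: Lemma \ref{lem::genInDegX} is justified there by the bare citation \cite[Cor.~4.3 \& Cor.~4.6]{vite}, and your ``intrinsic route'' via the Rao module is, as you correctly guessed, the mechanism of the cited source. Your reconstruction is sound where it is explicit: Lemma \ref{lem::chi} with $k=1$ and \eqref{eq::rank1HR} do give $\operatorname{reg}\mathcal{I}_C=5$ in both cases (with $H^2(\mathcal{I}_C(2))\cong H^0(\OO_C)^{\vee}\neq0$ obstructing $4$-regularity on $D_1$, and $H^1(\mathcal{I}_C(3))\neq0$ on $D_2$), the values $h^0(\mathcal{I}_C(4))=5$ and $h^0(\mathcal{I}_C(5))=16$ are correct, and your observation that the Hilbert-series numerator $5t^4-4t^5$ makes a putative degree-$5$ generator numerically invisible pinpoints exactly why the statement for $D_1$ is not formal: everything reduces to $\beta_{0,5}(I_C)=0$, i.e.\ surjectivity of the $20$-to-$16$ multiplication map. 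Your semicontinuity fallback is also legitimately structured: $h^0(\mathcal{I}_C(4))$ and $h^0(\mathcal{I}_C(5))$ are constant on $D_1^{\circ}$ (they are determined by the Hartshorne--Rao module), so the cokernel dimension is upper semicontinuous, and $D_1$ is irreducible by Proposition \ref{toolvite}; moreover the paper already supplies the needed witness, since the explicit \texttt{Macaulay2} curve of Section \ref{Sec2} (the one behind Corollary \ref{cor::explicitSemicanonical}) is a smooth semicanonical curve on which checking the degrees of a minimal generating set is a one-line computation.

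The only soft spots are the deferred steps, and both can be closed cheaply, so I record how. For route (a) you do not need the full Martin-Deschamps--Perrin machinery: the paper itself (proof of Lemma \ref{lemma:: doublecover}) notes that every semicanonical $C$ is linked by a complete intersection $Z=F\cap F'$ of two quartics to a hyperelliptic curve $C'$ of bidegree $(2,4)$ on a smooth quadric $Q$. Standard liaison gives $\mathcal{I}_C/\mathcal{I}_Z\cong\omega_{C'}(-4)$ with $(I_C)_m/(I_Z)_m\cong H^0(\omega_{C'}(m-4))$ (here $H^1_*(\mathcal{I}_Z)=0$ since $Z$ is a complete intersection), so generation of $I_C$ in degree $4$ reduces to generation of $\bigoplus_m H^0(\omega_{C'}(m))$ in degree $0$; since $\omega_{C'}\cong\OO_Q(0,2)|_{C'}$ and the map $H^0(\OO_Q(1,1))\otimes H^0(\OO_Q(0,2))\to H^0(\OO_Q(1,3))\cong H^0(\omega_{C'}(1))$ is surjective by K\"unneth, this holds, confirming $\beta_{0,5}=0$. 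Finally, a small imprecision in your $D_2$ discussion: the Hilbert function alone does not show that degree $5$ is ``attained''; you need, e.g., irreducibility of the cubic $F_3$ to see that $\dim S_1\cdot (I_C)_4=\dim S_2F_3+\dim S_1Q=10+4=14<16$, so two genuine quintic generators appear. This is harmless for the lemma as used in the paper (Proposition \ref{effectiveNefD3} only needs generation in degrees $\le5$), but worth stating if you claim sharpness.
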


\begin{rmk} Let $\mathcal{H}_{3,6}$ denote the unique component of smooth ACM curves of degree 6 and genus 3. In this component, the locus of ACM curves is a dense open subset. Its complement consists of two irreducible divisors, corresponding to cases (2) and (3) in Theorem \ref{thm::HR63}. It turns out that each of these divisors is extremal in the effective cone $\EFF \ \! \mathcal{H}_{3,6}$ \cite[Thm. 2.4]{vite}. The analogous divisors in $\mathcal{H}_{10,11}$ are described by Definition \ref{def::typesOfCurves}.
\end{rmk}

\subsection{An explicit curve in $D_1$}
In this subsection we describe an explicit curve $C\in D_1$ using \texttt{Macaulay2} \cite{m2}. We construct $C$ by liaison with a curve $C_0$ which is of bidegree $(2, 4)$, and the sum of lines, in the quadric $\{xw-yz=0\}$. Consider the complete intersection of two quartic surfaces $F_1,F_2$, such that $F_1\cap F_2=C\cup C_0$.  We now verify that $C$ is smooth, irreducible, has the correct degree and genus and it is semicanonical. Moreover, we check that $\dim |11H-3E|=3$.

\begin{verbatim}
    P3 = QQ[x, y, z, w];
    C0 = ideal(y*z-x*w, x^2*z*w-x*y*w^2, x^3*w-x*y^2*w, x^3*y-x*y^3);
    F1 = (x*z+x*w+2*y*w+w^2)*(x*w-y*z) + x^2*z*w-x*y*w^2;
    F2 = (z^2+w^2)*(x*w-y*z) + x^3*y-x*y^3;
    C = saturate(ideal(F1, F2), C0);
    
    print(codim C, degree C, genus C)	-- Invariants
    codim trim ideal singularLocus C == 4 	-- Smoothness
    rank HH^1(sheaf module C) == 0		-- Irreducibility
    rank HH^0((sheaf(P3^1/C))(2)) == 11	-- C is semicanonical
    C3 = saturate(C^3);
    rank HH^0((sheaf module C3)(11)) - 1 	-- Compute dim |11H-3E|
\end{verbatim}

\medskip\noindent
As a corollary of these explicit computations, and the fact that $D_1\subset\mathcal{H}_{10,11}$ by Proposition \ref{toolvite}, we obtain the following.
\begin{cor}\label{cor::explicitSemicanonical}
    Let $C$ be a general element in $\mathcal{H}_{10,11}$ or in $D_1$, and let $X=Bl_C(\PP^3).$ Then $\dim |11H-3E|\leq3.$
\end{cor}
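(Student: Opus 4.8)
The plan is to prove the bound $\dim |11H-3E|\le 3$ by computing $h^0(\PP^3,\mathcal{I}_C^3(11))$ for the two cases. Recall that, by the dictionary recorded after Proposition \ref{Prop:: isoX}, effective divisors in the class $11H-3E$ on $X=Bl_C\PP^3$ correspond bijectively to degree-$11$ surfaces in $\PP^3$ vanishing to order at least $3$ along $C$; since $C$ is smooth, the relevant sheaf is the symbolic (equivalently ordinary) cube $\mathcal{I}_C^3$, so $\dim|11H-3E| = h^0(\PP^3,\mathcal{I}_C^3(11))-1$. The strategy is therefore to establish the numerical inequality $h^0(\PP^3,\mathcal{I}_C^3(11))\le 4$ in both cases, which gives the stated bound.

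For a general $C\in D_1$ I would simply invoke the explicit \texttt{Macaulay2} computation presented just before the corollary: the curve $C$ constructed there is verified to be smooth, irreducible, of degree $10$ and genus $11$, and semicanonical, and the final line computes $\operatorname{rank} H^0(\mathcal{I}_3(11))=4$, i.e. $\dim|11H-3E|=3$ for that explicit member of $D_1^\circ$. Since $\dim|11H-3E|=h^0(\mathcal{I}_C^3(11))-1$ is upper semicontinuous in flat families and $D_1$ is irreducible (Proposition \ref{toolvite}), the value $3$ at this special smooth point of $D_1$ bounds the dimension for the general member from above, establishing the $D_1$ case. The same semicontinuity argument, carried along a one-parameter degeneration inside $\mathcal{H}_{10,11}$ from a general ACM curve to this explicit $C\in D_1$, then transports the bound to the general $C\in\mathcal{H}_{10,11}$: the generic value of $h^0(\mathcal{I}_C^3(11))$ over the irreducible component $\mathcal{H}_{10,11}$ is at most its value at any closed point, in particular at the computed $D_1$ curve.

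The main subtlety to address is the flatness, or at least the semicontinuity of $h^0(\mathcal{I}_C^3(11))$, across the specialization from an ACM curve to a semicanonical one. The cube $\mathcal{I}_C^3$ need not behave well in families when the Hartshorne-Rao module jumps, so one must be careful that $h^0$ is genuinely upper semicontinuous rather than merely the cohomology of a fixed sheaf; the cleanest route is to work with the flat family of curves over an irreducible base in $\mathcal{H}_{10,11}$ meeting $D_1^\circ$ at the explicit point, and apply upper semicontinuity to the relative sheaf $\mathcal{I}_{\mathcal{C}}^3(11)$ on $\PP^3\times B$, after checking that forming the third power commutes with restriction to fibers for smooth curves (which holds since smoothness makes $\mathcal{I}_C^3$ locally free of the expected form along $C$). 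I expect this semicontinuity bookkeeping — rather than the computation itself, which is mechanical — to be the step requiring the most care, since one is comparing the generic ACM fiber with a special non-ACM fiber where the cohomology of twists of $\mathcal{I}_C$ changes.
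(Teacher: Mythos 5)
Your proposal is correct and follows essentially the same route as the paper: the paper's proof of this corollary is precisely the explicit \texttt{Macaulay2} computation at a smooth semicanonical curve combined with upper semicontinuity of $h^0(\mathcal{I}_C^3(11))$ over the irreducible loci $D_1$ and $\mathcal{H}_{10,11}$ (the latter containing $D_1$ by Proposition \ref{toolvite}). Your additional care about flatness of the relative cube $\mathcal{I}_{\mathcal{C}}^3$ along the specialization, which holds for families of smooth (hence locally complete intersection) curves, is a detail the paper leaves implicit, not a divergence in method.
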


\medskip\noindent
Lemma \ref{lemma11he} below will show that this, in fact, is an equality. This code will be used again in Lemma \ref{lem::smoothRam}.

\medskip
\section{Mori program of $Bl_C \PP^3$ with $C$ smooth ACM}\label{sec::3}
\noindent
This section runs the Minimal Model Program on $X=Bl_C\ \PP^3$, when $C$ is a smooth ACM curve in $\mathcal{H}_{10,11}$. Sections \ref{S5} and \ref{S4} will focus on the cases $C\in D_1$ and $C\in D_2$, respectively.

\medskip\noindent    
We begin by describing the stable base locus decomposition (SBLD) of $X$. Here, the divisor classes are written in terms of the basis $N^1(X)=\langle H, E\rangle$ discussed in Section \ref{sec::2}, where $H$ and $E$ stand for the pullback of $\mathcal{O}_{\PP^3}(1)$ and the exceptional divisor, respectively.

\medskip\noindent By Cayley's formula \cite{barz}, the generic curve $C\in \mathcal{H}_{10,11}$ carries exactly twenty 4-secant lines. We will denote the union of these lines by $L$.

\begin{notat}\label{notat}
    Given non-zero classes $A,B\in N^1(X)$, we denote by $[A, B)\subset N^1(X)$ the convex cones of linear combinations $aA+bB$ where $a> 0$ and $b\geq 0$. Variants of this definition, such as $[A, B]$, $(A, B]$,  $(A, B)$, are defined similarly.
\end{notat}

\medskip\noindent        
We begin by describing the nef cone of $X$.

\begin{prop}\label{prop::nefConeGeneral}
  Let $C$ be a smooth ACM curve. The nef cone of $X$ is $$\mathrm{Nef(X)}=[-K_X,H].$$
\end{prop}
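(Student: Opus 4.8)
The plan is to compute the nef cone of $X = Bl_C\PP^3$ by exhibiting two extremal nef classes and verifying that everything strictly between them is nef while nothing beyond is. Since $N^1(X) \cong \mathbb{Z}^2$, the nef cone is a two-dimensional cone bounded by two rays, so it suffices to identify these two boundary rays correctly and confirm nefness on the interior. The two candidate edges are $H$ (the pullback of the hyperplane class, which is nef because it is the pullback of an ample class under the morphism $X \to \PP^3$) and $-K_X$. Recall that $-K_X = 4H - E$ by the adjunction/blow-up formula, since $K_X = \pi^*K_{\PP^3} + E = -4H + E$.

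First I would establish that $H$ spans one extremal ray of $\Nef(X)$. The class $H$ is nef and lies on the boundary of the effective cone side generated by pullbacks; any class $nH - kE$ with $k < 0$ fails to be effective (it would correspond to a hypersurface with negative multiplicity along $C$), so $H$ is genuinely an edge. To see that $H$ is on the boundary of the nef cone rather than in its interior, I would exhibit a curve on which $H$ has intersection number zero, namely a fiber of the exceptional divisor $E \to C$: such a fiber $\ell$ satisfies $H \cdot \ell = 0$ and $E \cdot \ell = -1$, so $H$ is nef but not ample, confirming it is a boundary ray.

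Next, the crucial step is to show that $-K_X = 4H - E$ is nef and spans the other extremal ray. Nefness of $-K_X$ amounts to showing that the anticanonical linear system $|4H - E|$ is base-point free (or at least that $-K_X$ meets every curve non-negatively); geometrically this is the statement that quartic surfaces through $C$ cut out the anticanonical model $Y$, the linear determinantal quartic, and the associated morphism $f: X \to Y$ makes $-K_X$ the pullback of an ample class. For a general ACM curve $C$, the ideal $\mathcal{I}_C$ is generated in degree $4$ (this is the ACM/determinantal structure discussed in Section \ref{construction}), which guarantees base-point freeness of $|4H-E|$. To see $-K_X$ is a boundary ray and not interior, I would produce a curve with $(-K_X)$-degree zero: the strict transform of a $4$-secant line $\ell \subset L$ satisfies $H \cdot \ell = 1$ and $E \cdot \ell = 4$, hence $(4H - E)\cdot \ell = 0$. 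These twenty flopping lines are exactly the curves contracted by $f$, confirming that $-K_X$ sits on the boundary.

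Having pinned both edges, I would conclude that $\Nef(X) = [-K_X, H]$ by the standard two-dimensional argument: any class strictly inside the cone spanned by $-K_X$ and $H$ is a positive combination of two base-point-free (hence nef) classes and therefore nef, while any class outside this cone has negative intersection with either a fiber $\ell$ of $E$ (beyond $H$) or a $4$-secant line (beyond $-K_X$), so it is not nef. The main obstacle I anticipate is the base-point-freeness of $|{-K_X}| = |4H - E|$, equivalently verifying that the quartics through the ACM curve $C$ have no base points away from the exceptional locus and generate $\mathcal{I}_C$ in degree $4$; this is where the specific geometry of the ACM curve of degree $10$ and genus $11$ enters, and it is the content that makes the anticanonical model a determinantal quartic rather than something more degenerate. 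Everything else is a routine intersection-number computation using the identities of Lemma \ref{lem::chi} and the preceding intersection table.
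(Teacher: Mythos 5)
Your proposal is correct and takes essentially the same route as the paper's proof: both establish nefness of $H$ and of $-K_X=4H-E$ via base-point freeness (the latter because $\mathcal{I}_C$ is generated in degree four, Lemma \ref{lem::genInDegX}), and both verify extremality by intersecting with a ruling of $E$ and with a $4$-secant line to $C$. One harmless slip worth noting: your parenthetical claim that $nH-kE$ with $k<0$ is never effective is false (e.g.\ $H+E$ is effective), but your argument never relies on it---the curve computations $H\cdot\ell=0$, $E\cdot\ell=-1$ for a fiber of $E\to C$, and $(4H-E)\cdot\ell'=0$ for a $4$-secant $\ell'$, do all the work in pinning both boundary rays.
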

\begin{proof}
    Observe that $H$ is base-point free as it is the pullback of a very ample class. By the resolution \eqref{eq::genMFR}, the ideal sheaf $\mathcal{I}_C$ is generated in degree four and, consequently, $-K_X=4H-E$ is base-point free. Therefore, both classes are nef.
    
    Moreover, these classes are not ample: $H$ contracts the exceptional divisor $E$, and $-K_X$ contracts any 4-secant line to $C$.\\
\end{proof}
    
\begin{lem}\label{lem::excLocus}
    Let $C\in \mathcal{H}_{10,11}$ be either smooth ACM or general in the family $D_1$. Let
    $$f:X\xrightarrow{|-K_X|} Y$$
    be the anticanonical morphism. Then the exceptional locus of $f$ is $Exc(f)=L$, the union of the 4-secant lines to $C$.
\end{lem}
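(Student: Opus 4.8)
The plan is to translate the statement into a single intersection-theoretic condition and then classify all curves that satisfy it. First I would record that $-K_X=4H-E$ is base-point free (Proposition~\ref{prop::nefConeGeneral}) and that it is big: using $H^3=1$, $H^2\cdot E=0$, $H\cdot E^2=-10$ and $E^3=-60$ one computes $(-K_X)^3=(4H-E)^3=64-120+60=4>0$. Hence $f$ is a birational morphism onto its image $Y\subset\PP^4$ (a quartic threefold, since $\dim|{-K_X}|=h^0(\mathcal{I}_C(4))-1=4$ and $\deg Y=(-K_X)^3=4$), with $-K_X=f^{*}\mathcal{O}_Y(1)$. Consequently an irreducible curve $\Gamma\subset X$ is contracted by $f$ if and only if $(4H-E)\cdot\Gamma=0$. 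The inclusion $L\subseteq\mathrm{Exc}(f)$ is then immediate: if $\ell$ is a $4$-secant line, its strict transform $\widetilde\ell$ has $H\cdot\widetilde\ell=1$ and $E\cdot\widetilde\ell=4$, so $(4H-E)\cdot\widetilde\ell=0$.

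The content of the lemma is the reverse inclusion, which I would obtain by classifying every irreducible $\Gamma$ with $(4H-E)\cdot\Gamma=0$. Writing $n=H\cdot\Gamma\ge 0$ and $b=E\cdot\Gamma$, the condition reads $b=4n$, and I split into the cases $\Gamma\not\subset E$ and $\Gamma\subset E$. If $\Gamma\not\subset E$, then $\Gamma$ is the strict transform of an irreducible curve $B\subset\PP^3$ of degree $n$ meeting $C$ in $4n$ points, counted with multiplicity. For $n=1$ this is exactly a $4$-secant line, as desired. For $n\ge 2$ I would derive a contradiction by bounding the number of points in which a degree-$n$ curve can meet $C$. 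Here the hypothesis that $C$ lies on no cubic surface is essential: since such a $C$ is ACM (equivalently $h^0(\mathcal{I}_C(3))=0$, so no cubic contains $C$) with $\mathcal{I}_C$ generated in degree $4$, a degree-$n$ curve meeting $C$ in as many as $4n$ points is too special to exist; this is where the first potential multisecant curves, such as $8$-secant conics, are excluded.

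It remains to rule out contracted curves inside the exceptional divisor. Here $E=\PP(N_{C/\PP^3})\to C$ is a ruled surface with $\deg N_{C/\PP^3}=60$, and the restriction $(4H-E)|_E$ is nef, being the restriction of a nef class, with self-intersection $(4H-E)^2\cdot E=20>0$; thus it is nef and big. By Nakai--Moishezon it fails to be ample only if some irreducible curve of $E$ meets it in zero, and a short computation in the basis given by a fibre $\mathfrak{f}$ and $\eta:=-E|_E$ (with $\mathfrak{f}^2=0$, $\mathfrak{f}\cdot\eta=1$, $\eta^2=-60$) shows that such a curve would be a section $\Gamma_0$ with $\Gamma_0^2=-20$. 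I would exclude this by bounding the instability of $N_{C/\PP^3}$: the existence of such a section is equivalent to a sub-line-bundle of degree $\ge 40$, and the balancedness of the normal bundle of $C$ (again controlled by $C$ being ACM and lying on no cubic) forces the minimal section to satisfy $\Gamma_0^2>-20$. Hence $(4H-E)|_E$ is ample, $f|_E$ is finite, and no curve of $E$ is contracted; combining the two cases gives $\mathrm{Exc}(f)=L$.

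The main obstacle is precisely this reverse inclusion, namely excluding the two possible sources of extra contracted curves: multisecant curves of degree $\ge 2$ not contained in $E$, and the minimal section inside $E$. Both are governed by the extrinsic geometry of $C$---its being ACM, its lying on no cubic surface, and the balancedness of $N_{C/\PP^3}$---and converting these qualitative facts into the sharp numerical bounds $b<4n$ for $n\ge 2$ and $\Gamma_0^2>-20$ is the technical heart of the argument.
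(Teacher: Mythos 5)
There is a genuine gap, in two places. First, your framing misreads the hypothesis: ``not contained in a cubic surface'' is \emph{not} equivalent to $C$ being ACM. By Theorem \ref{prop::smoothCurves}, the hypothesis covers two cases, $C$ ACM \emph{or} $C$ semicanonical ($C\in D_1^\circ$, with $HR(C)\cong\mathbb{K}(-2)\neq 0$), and in the semicanonical case your opening claim fails outright: there $|-K_X|$ is base-point free but the induced morphism is $2:1$ onto a quadric threefold (Lemma \ref{lemma:: doublecover}), not birational onto a quartic, so $\deg Y=4$ and $-K_X=f^*\mathcal{O}_Y(1)$ with $f$ birational are false. Your criterion ``$\Gamma$ contracted iff $(4H-E)\cdot\Gamma=0$'' survives (it only needs base-point freeness), but the argument as written silently assumes the ACM case and would not notice the second case at all. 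Second, and more seriously, the two exclusions that constitute the actual content of the lemma are asserted, not proved: you give no argument that a degree-$n$ curve cannot be $4n$-secant to $C$ for $n\ge 2$ (and you have no a priori bound on $n$, so this is an infinite family of cases, starting with $8$-secant conics), and no proof that $N_{C/\PP^3}$ admits no sub-line bundle of degree $40$, which is what your section $\Gamma_0\subset E$ with $\Gamma_0^2=-20$ amounts to. You acknowledge these are ``the technical heart,'' which is exactly the problem: the proposal is a plan, not a proof.

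For comparison, the paper closes both gaps with one uniform argument that never splits off the exceptional divisor and treats ACM and semicanonical curves together. For a contracted irreducible curve $\gamma$, base-point freeness gives $\dim\mathcal{D}_\gamma=3$ for the sublinear system $\mathcal{D}_\gamma\subset|-K_X|$ of members containing $\gamma$; since $C$ lies on no surface of degree $<4$, every member of $\mathcal{D}_\gamma$ is irreducible, so $\gamma$ lies in a complete intersection of two of them and the residual class $\gamma'=(-K_X)^2-\gamma$ is effective. Nefness of $H$ then yields $H\cdot\gamma\le H\cdot(-K_X)^2=6$ --- note this bound instantly disposes of your entire $\Gamma\subset E$ case, since a contracted curve in $E$ dominates $C$ and has $H$-degree at least $10$, making the normal-bundle analysis unnecessary. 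Finally, on a smooth K3 member $F\in|-K_X|$ one has $\gamma+\gamma'=(4H-E)|_F$, whence $\gamma^2-(\gamma')^2=-4$ and $p_a(\gamma)=p_a(\gamma')-2$; for $d=H\cdot\gamma\ge 3$ the residual curve has degree $\le 3$, forcing $p_a(\gamma)<0$, and for $d=2$ one would need a degree-$4$, genus-$2$ curve on a quadric, which a short case check excludes. If you want to salvage your route, you must (i) restate the setup so the semicanonical case is handled (or reduce to degree-zero curves without invoking birationality), (ii) prove a bound on $n$ and then the non-existence of $4n$-secant curves for $2\le n$, and (iii) either prove the sub-line-bundle bound for $N_{C/\PP^3}$ or replace it by the degree argument above.
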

\begin{proof} 
    In both cases, the anticanonical map $f$ is base-point free (by the resolution \eqref{eq::genMFR} or Lemma \ref{lem::genInDegX}, respectively) and the 4-secant lines are contracted. Thus, we need to argue that no more curves are contracted.

    Given $p\in X$, denote by $\mathcal{D}_p\subset|-K_X|$ the sublinear system of divisors whose support contains $p$. Since $f$ is base-point free, the codimension of $\mathcal{D}_p$ in $|-K_X|$ is 1 for every $p\in X$.

    Let $\gamma\subset X$ be an irreducible and reduced curve contracted by $f$. Let us argue that $\gamma$ must be a 4-secant line. We know that
    $-K_X\cdot\gamma=0$. In particular, if $p\in\gamma$ is any point, then an element $F\in|-K_X|$ contains $p$ if and only if it contains $\gamma$. Thus, the locus $\{F\in|-K_X|:\gamma\subset F\}$ is a 3-plane in $|-K_X|\cong\PP^4$, and we can choose two different elements $F,F'\supset\gamma$ in it.
    
    The fact that $\gamma$ is contained in the complete intersection $F\cap F'$ implies that the class $\gamma':=(-K_X)^2-\gamma$ is effective. Since $H$ is nef, then
    $$H\cdot\gamma=H\cdot(-K_X)^2-H\cdot\gamma'\leq H\cdot(-K_X)^2=6.$$
    On the other hand, $H\cdot\gamma\neq0$, since otherwise $E\cdot\gamma=4(H\cdot\gamma)-(-K_X)\cdot\gamma=0$, which is absurd. Thus, $H\cdot\gamma>0$. If $H\cdot\gamma=1$, then $\gamma$ is a 4-secant line to $C$. We are left to prove that $2\leq H\cdot\gamma\leq6$ is impossible.

    Denote $d:=H\cdot\gamma$. Observe from the equality $(4H-E)\cdot\gamma=0$ that $(4H-E)\cdot\gamma'=(4H-E)^3=4$. From this we deduce the identities
    $$H\cdot\gamma'=6-d\qquad E\cdot\gamma'=4(5-d).$$
    Fix a smooth surface $F\in|-K_X|$ containing $\gamma$ and $\gamma'$. In $F$, we have that $\gamma+\gamma'=(4H-E)|_F$. Therefore
    $$\gamma^2-(\gamma')^2=(\gamma+\gamma')\cdot(\gamma-\gamma')=-(4H-E)\cdot\gamma'=-4.$$
    Since $F$ is a K3 surface, this leads to the following identity involving the arithmetic genus of $\gamma$ and $\gamma'$:
    $$p_a(\gamma)=p_a(\gamma')-2.$$
    If $d\geq3$, then $deg(\gamma')=6-d\leq3$ and $p_a(\gamma)=p_a(\gamma')-2<0$, which is a contradiction since $\gamma$ is reduced and irreducible. If $d=2$ then $p_a(\gamma)=0$. Therefore, $p_a(\gamma')=2$. 
    
    Let us argue that $\gamma'$ is contained in a quadric surface. First, we prove that $\gamma'$ is reduced. Otherwise, we can write its divisor class in $F$ as $\gamma'=2c,2\ell_1+2\ell_2$ or $2\ell_1+\ell_2+\ell_3$, where $c$ and $\ell_i$ (i=1,2,3) denote the classes of an irreducible conic and a line, respectively. In $F$ we have
    \begin{align*}
        (2c)^2&=-8,\\
        (2\ell_1+2\ell_2)&=-16+8\ell_1\cdot\ell_2=-16\textrm{ or }-8,\\
        (2\ell_1+\ell_2+\ell_3)^2&=-12+4\ell_1\cdot\ell_2+4\ell_1\cdot\ell_3+2\ell_2\cdot\ell_3=-12,-8,-6,-4,\textrm{ or }-2
    \end{align*}
    depending on wether or not the lines intersect each other. In any of these cases, the possible self-intersection numbers are different from $(\gamma')^2=2.$ Consequently, $\gamma'$ must be reduced.

    Observe that $\gamma'$ is connected: otherwise $\gamma'=\gamma'_1\sqcup\gamma'_2$ with $p_a(\gamma'_i)\leq 3$. This would lead to $p_a(\gamma')=p_a(\gamma'_1)+p_a(\gamma'_2)-1\leq1$, which is a contradiction. 

    Finally, the decomposition into irreducible components of $\gamma'$ must fall in (at least) one of the following cases:
    \begin{enumerate}[a)]
        \item $\gamma'$ is irreducible,
        \item $\gamma'=\ell\cup q$, where $\ell$ is a line, $q$ is a degree 3 irreducible curve, and $\ell\cap q\neq\emptyset$,
        \item $\gamma'=c_1\cup c_2$ is a union of irreducible conics with $c_1\cap c_2\neq\emptyset$,
        \item $\gamma=c\cup \ell_1\cup\ell_2$ with $c\cap\ell_1,\ell_1\cap\ell_2\neq\emptyset$,
        \item $\gamma'=c\cup\ell_1\cup\ell_2$ with $c\cap\ell_1,c\cap\ell_2\neq\emptyset$,
        \item $\gamma'=\ell_1\cup\ell_2\cup\ell_3\cup\ell_4$ with $\ell_i\cap\ell_{i+1}\neq\emptyset$ for $i=1,2,3$, or
        \item $\gamma'=\ell_1\cup\ell_2\cup\ell_3\cup\ell_4$ with $\ell_i\cap\ell_4\neq\emptyset$ for $i=1,2,3$.
    \end{enumerate}
    In each of these cases, we can choose nine points on $\gamma'$ where each degree $k$ component contains at least $2k+1$ of them. B zout's theorem now guarantees that the quadric through such 9 points contains each irreducible component of $\gamma'$.
    
    Now, a smooth quadric has no genus 2 curves of degree 4; a degree 4 curve in a conic cone has genus 1; and a degree 4 curve in a double plane is either a plane quartic (genus 3), a double conic (genus 1), or a line plus a triple line (genus 3). This is a contradiction.

\end{proof}

\noindent
\begin{rmk}
    If $C$ is ACM, we can prove Lemma \ref{lem::excLocus} as follows: all 4-secant lines to $C$ are contracted into singular points of $Y$. By the proof in Proposition \ref{lem4IsInjective} below, $Y$ is a linear determinantal quartic. Thus $Y$ is singular along isolated points by \cite[Prop. 1.3.6, (b)] {oslo} (therein called points of rank 2), and the inverse image of such points under $f$ is precisely the union of the $4$-secant lines to $C$ \cite[Prop. 1.3.6, (c)]{oslo}. This proves that $Exc(f)=L$, provided that $f$ does not contract any divisor.
    
    To see that $f$ is not a divisorial contraction, observe that $h^{2,1}(X)=g(C)=11$ means that the twenty singular points of $Y$ fail to impose independent conditions on cubic hypersurfaces \cite{cynk}. This implies that $Y$ is not $\mathbb{Q}$-factorial, thus $|-K_X|$ induces a small contraction.
   \end{rmk}

\medskip\noindent
We now explore the anticanonical model of $X$.

\begin{prop}\label{lem4IsInjective}
Let $C\in \mathcal{H}_{10,11}$ be smooth and ACM. The anticanonical linear series of $X=Bl_C\PP^3$ yields a birational morphism
$$f:X\xrightarrow{|-K_X|} Y\subset\PP^4$$ onto a quartic 3-fold, $Y$. If $C$ is general, $Y$ is determinantal with 20  singular points.
\end{prop}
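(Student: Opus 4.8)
The plan is to study the linear system $|-K_X|$ with $-K_X = 4H - E$ directly, establishing in turn the target $\PP^4$, the degree of the image, the determinantal presentation, and the singular locus.

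First I would fix the dimension of the system. By Lemma \ref{lem::chi}, $\chi(X, 4H-E) = \binom{7}{3} - 5(4 + 4 - 2 - 1 + 1) = 35 - 30 = 5$. Since $C$ is ACM we have $HR(C) = 0$, so $h^1(\PP^3, \mathcal{I}_C(4)) = 0$; moreover $h^2(\mathcal{I}_C(4)) = h^1(\OO_C(4)) = 0$ because $\deg \OO_C(4) = 40 > 2g - 2$, and $h^3(\mathcal{I}_C(4)) = 0$. Hence $h^0(X, -K_X) = 5$, so $f$ maps to $\PP^4$ and the five sections are independent, making the image non-degenerate. I would then record the intersection number, using the first lemma with $d = 10$, $g = 11$:
$$(4H - E)^3 = 64 H^3 - 48 H^2 E + 12 H E^2 - E^3 = 64 + 12(-10) - (-60) = 4.$$
Because $-K_X$ is base-point free (Proposition \ref{prop::nefConeGeneral}), $f$ is a genuine morphism, and once birationality is known this yields $\deg Y = (-K_X)^3 = 4$.

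Next I would produce the determinantal structure, which is also the key to birationality. For $C$ general (in fact for every smooth ACM curve with this Hilbert function, since $h^0(\mathcal{I}_C(3)) = \chi(X, 3H - E) = 0$ and $h^0(\mathcal{I}_C(4)) = 5$), the Hilbert--Burch theorem forces the minimal free resolution
$$0 \to \OO_{\PP^3}(-5)^4 \xrightarrow{M} \OO_{\PP^3}(-4)^5 \to \mathcal{I}_C \to 0,$$
whose comparison of Hilbert polynomials returns $d = 10$ and $g = 11$; the five quartic generators of $\mathcal{I}_C$ are the maximal minors of the $5\times 4$ matrix $M$ of linear forms, and these are precisely the coordinates of $f$. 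This $M$ is the matrix of the construction of Section \ref{construction} with $c = 4$; its reinterpretation as a $4 \times 4$ matrix $N$ of linear forms in five variables has $Y = \{\det N = 0\}$, and $X$ is the associated incidence variety. Over a point $\lambda \in Y$ with $\mathrm{rank}\, N_\lambda = 3$ the kernel line is unique, so the fibre of $f$ is a single point; this shows $\deg f = 1$, hence $f$ is birational onto the quartic $Y$ and, by \cite[Corollary 1.12]{Beauville}, $Y$ is linear determinantal with $f$ its natural small resolution. I emphasize that this step is where the ACM hypothesis is essential: for a semicanonical $C$ the same system is instead $2$-to-$1$ onto a quadric.

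Finally, for the singular locus I would invoke Cayley's formula, giving a general $C$ exactly twenty $4$-secant lines; since $h^0(\mathcal{I}_C(3)) = 0$ the curve lies on no cubic, so Lemma \ref{lem::excLocus} applies and these lines constitute $\mathrm{Exc}(f)$. As $f$ contracts no divisor each is sent to a point where $\mathrm{rank}\, N_\lambda \le 2$, and by the description of \cite[Proposition 1.3.6]{oslo} these rank-two points are exactly the singular points of $Y$; hence $Y$ has precisely twenty of them. The main obstacle is the birationality in the previous paragraph: ruling out the alternative $\deg f = 2$, $\deg Y = 2$ that is equally compatible with $(-K_X)^3 = 4$ and with non-degeneracy. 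This cannot be settled by numerics alone and genuinely requires the determinantal/incidence description of the generic fibre, i.e. the identification of the anticanonical image with $\{\det N = 0\}$. Once that identification is secured, Beauville's theorem and the secant-line count are bookkeeping.
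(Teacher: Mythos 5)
Your proof is correct, but it takes a genuinely different route from the paper's. The paper argues intrinsically: it fixes a smooth K3 member $F\in|-K_X|$, identifies the restriction of $|-K_X|$ to $F$ with the complete linear system of the residual curve $C'$ (the degree-$6$, genus-$3$ ACM curve linked to $C$ by two quartics, non-hyperelliptic by Theorem \ref{prop::smoothCurves}), and invokes \cite[Prop.\ VIII.13 (iv)]{Beau} to conclude that the restriction of $f$ to $F$ is birational, the surjectivity of $H^0(X,-K_X)\to H^0(F,\OO_F(C'))$ coming from $h^1(\OO_X)=0$; birationality of $f$ follows since such $F$ cover $X$, and the determinantal structure is deduced \emph{a posteriori}: the image of a general plane is a degree-$6$ Bordiga surface, so \cite[Corollary 1.12]{Beauville} applies. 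You reverse the logic: Hilbert--Burch produces the linear resolution, the five minors give $f$, and birationality is read off from the incidence variety of Section \ref{construction} at rank-$3$ points. This is essentially the alternative the paper itself sketches in the remark following Lemma \ref{lem::excLocus}, where \cite[Prop.\ 1.3.6]{oslo} is used exactly as you use it. Your route yields the determinantal equation and the small-resolution structure simultaneously (making the ``20 singular points'' claim bookkeeping, as you say); the paper's route requires no knowledge of the Betti table, but leans on the liaison analysis of Section \ref{Sec2} and on Beauville's characterization instead.

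Two steps in your write-up deserve an extra line. First, your parenthetical that the linear resolution holds for \emph{every} smooth ACM $C$ does not follow from $h^0(\mathcal{I}_C(3))=0$ and $h^0(\mathcal{I}_C(4))=5$ alone (extra degree-$5$ generators cancelled by extra degree-$5$ syzygies are numerically possible): you should add that $h^2(\mathcal{I}_C(2))=h^0(\omega_C(-2))=0$, since $\omega_C(-2)$ has degree $0$ and a smooth ACM curve cannot be semicanonical, whence $\mathcal{I}_C$ is $4$-regular, is generated by the five quartics, and has first syzygies only in degree $5$; this pins down the resolution. Second, the assertion that $X=Bl_C\PP^3$ \emph{is} the incidence variety with $f$ the second projection is the crux of your argument and is stated rather than proved: it holds because the incidence correspondence is cut out by the bilinear equations $\lambda^{T}M(x)=0$, hence coincides with the closure of the graph of the map defined by the five minors, i.e.\ with the blow-up of the ideal those minors generate, which is $\mathcal{I}_C$ by the regularity statement just mentioned (one also needs $\det N\not\equiv 0$ and that the kernel point of a generic rank-$3$ $\lambda$ lies off $C$, both clear for general $C$). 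With those lines added, your argument is complete and valid.
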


\begin{proof}
    First we prove that $f$ is generically injective. Fix a smooth member $F\in|-K_X|$ and a general point $p\in F$. If $f(p)=f(q)$ for some other point $q\in X$, then by definition $q\in F$. We will show that this implies $p=q$.
    
    Consider a general member $F'\in|-K_X|$. The images in $\PP^3$ of $F$ and $F'$ are two smooth quartic surfaces containing $C$, and the residual curve $C'$ is a smooth ACM curve of degree 6 and genus 3, which is not hyperelliptic because it does not lie on a quadric surface, see \cite[Thm. 4.1]{vite}.

    This means that the restriction to $F\subset X$ of the effective divisor $F'$ is the divisor of the strict transform of $C'$, which we will also call $C'\subset F$. By \cite[Prop. VIII.13 (iv)]{Beau}, the complete linear system of $C'$ defines a birational morphism $F\to|C|$.

    Now consider the restriction map $H^0(X,-K_X)\to H^0(F,C')$ appearing in the exact cohomology sequence of
    $$0\to\OO_X\to\OO_X(-K_X)\to\OO_F(C')\to0.$$
    Since $h^0(X,\OO_X)=1$ and $h^1(X,\OO_X)=0$, this restriction map is surjetive. That is, the complete linear system $|C|$ in $F$ is cut-out by restrictions of divisors in $|-K_X|$. This implies that the restriction $f|_F:F\to|-K_X|\cong\PP^4$ is birational onto its image. In particular, $f(p)=f(q)$ implies that $p=q$, since $p\in F$ is general.

    Since smooth members of $|-K_X|$ cover a dense open set of $X$, this argument proves that $f(p)=f(q)$ implies $p=q$ for the general $p\in X$, so that $f:X\to|-K_X|\cong\PP^4$ is birational onto its image $Y$. Lemma \ref{lem::excLocus} implies that $Y$ acquires exactly 20 nodes, and that the degree of $Y$ is $(-K_X)^3=4$.
    
    Moreover, the image of a general member in the linear system $|H|$ under $f$, where $H$ is the pull-back of the a hyperplane section in $\PP^3$, is a \textit{Bordiga surface} $S$ in $\PP^4$ of degree 6. This surface satisfies that $\mathcal{I}_S$ has a minimal free resolution as follows
    \begin{equation}\label{eq::bordigaMFR}
        0\to \mathcal{O}_{\PP^4}(-4)^{3}\overset{N}{\to} \mathcal{O}_{\PP^4}(-3)^{4}\to \mathcal{I}_{S}\to 0.
    \end{equation}
    We conclude by the following lemma.
\end{proof}

\medskip\noindent
A Bordiga surface $S\subset \PP^4$ is defined as a smooth surface whose ideal sheaf $\mathcal{I}_S$ admits a minimal free resolution like (\ref{eq::bordigaMFR}). Containing one of these surfaces has the following implication.

\begin{lem}\label{lem::bordiga}
    If $S\subset\PP^4$ is a Bordiga surface and $Y$ is a quartic hypersurface containing $S$, then $Y$ is linear determinantal.
\end{lem}
\begin{proof}
    The map in the resolution \eqref{eq::bordigaMFR} admits a representation as a $(4\times 3)$-matrix $N=(n_{ij})$, where the entries are linear forms in variables $x,y,z,w$. A set of minimal generators $\{F_1,\ldots,F_4\}$ of $\mathcal{I}_S$ is obtained by  defining $F_i$ as the ($3\times 3$)-minor of $N$ resulting from removing the $i$-th row. If $Y$ is the hypersurface given by a degree 4 polynomial $F$, then $S\subset Y$ implies that $F$ can be written as $\ell_1\cdot F_1+\ldots+\ell_4\cdot F_4$ for some linear forms $\ell_i$. In other words, the following identity holds:
    $$F=det\left(\begin{array}{cccc}
    \ell_1 & n_{11} & n_{12} & n_{13}\\
    -\ell_2 & n_{21} & n_{22} & n_{23}\\
    \ell_3 & n_{31} & n_{32} & n_{33}\\
    -\ell_4 & n_{41} & n_{42} & n_{43}
    \end{array}\right),$$
    which means that $Y$ is linear determinantal as claimed.
\end{proof}

\begin{cor}
    $X$ is a Mori dream space.
\end{cor}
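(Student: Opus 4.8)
The plan is to show that $X = Bl_C\PP^3$ satisfies the standard criteria for being a Mori dream space, namely that it is a $\QQ$-factorial projective variety with finitely generated divisor class group whose effective cone decomposes into finitely many rational polyhedral chambers on each of which the stable base locus is constant, and such that every nef divisor is semiample. Since $X$ is the blow-up of a smooth projective variety along a smooth center, $X$ is itself smooth and projective, hence $\QQ$-factorial, and we already know from the preliminaries that $\PIC(X) \cong N^1(X) \cong \ZZ^2$ is finitely generated. Thus the entire question reduces to understanding the two-dimensional cone structure of $N^1(X)_\RR$, and the key point is that in a two-dimensional Néron--Severi space the effective, nef, and movable cones are all automatically rational polyhedral (each is cut out by at most two extremal rays), so the only real content is establishing semiampleness along the relevant edges.

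First I would invoke Proposition \ref{prop::nefConeGeneral}, which identifies $\Nef(X) = [-K_X, H]$ as a closed rational polyhedral cone with extremal rays spanned by $H$ and $-K_X = 4H - E$. The ray $H$ is the pullback of an ample class from $\PP^3$, hence base-point free and therefore semiample; the ray $-K_X$ is base-point free by Lemma \ref{lem::genInDegX} (since $\mathcal{I}_C$ is generated in degree four) and induces the anticanonical morphism $f : X \to Y$ of Proposition \ref{lem4IsInjective}. Any interior nef class is a positive combination of these two and is automatically semiample, so every nef class on $X$ is semiample. This verifies the semiampleness hypothesis on all of $\Nef(X)$, which is exactly the condition that can fail in higher Picard rank.

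Next I would record that the small resolution $f : X \to Y$ together with the flop $X \dashrightarrow X^+$ of diagram \eqref{SarkisovIntro} produces a second birational model $X^+$, and that the effective cone $\EFF(X)$ is spanned on one side by $H$ (or rather by $E$, the exceptional divisor, which is the rigid extremal ray on the boundary of the effective cone) and on the other side by the class of the degree-$40$ surface mentioned in the introduction. Because $N^1(X)_\RR$ is two-dimensional, $\EFF(X)$ is a priori a cone between two rays, hence rational polyhedral; the Mori chamber decomposition is then simply the subdivision of this cone by the finitely many walls coming from the nef cones of $X$ and $X^+$ pulled back to $X$. Concretely, the chambers correspond to the stable base locus chambers whose existence and finiteness will be established in Theorems \ref{teorema1}, \ref{thm3} and \ref{teorema2}, so one may cite Theorem \ref{mocktheorem::introThm} to conclude that there are exactly four such chambers, each rational polyhedral and each carrying a semiample model.

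The main obstacle, and the only step requiring genuine geometric input rather than the general structure of Picard-rank-two surfaces of cones, is verifying that the nef cone of the flopped model $X^+$ is likewise polyhedral with semiample boundary, so that the two nef cones fit together to tile the whole movable cone and no infinite sequence of flops or non-finitely-generated behavior intervenes. I expect this to follow formally once Theorem \ref{teorema1} has exhibited the explicit Sarkisov link \eqref{SarkisovIntro}: the link presents $X$ and $X^+$ as the only two small $\QQ$-factorial modifications of $Y$, each admitting a morphism to $\PP^3$, and the semiampleness of their nef boundaries is witnessed by these contractions together with the Cremona map $\phi$ of Lemma \ref{lemma11he}. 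Since all the relevant contractions are genuine morphisms and the running of the MMP terminates after the single flop, $X$ has finitely many minimal models and every effective divisor has a finitely generated section ring, which is precisely the assertion that $X$ is a Mori dream space.
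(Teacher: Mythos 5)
Your proof is correct in outline, but it takes a genuinely different route from the paper. The paper's argument is a two-line positivity argument: $-K_X$ is nef and big (Proposition \ref{prop::nefConeGeneral}), the pair $(X,-K_X)$ is klt because $C$ lies on smooth quartic surfaces (so a general member of $|-K_X|$ gives a klt boundary), hence $X$ is log Fano by \cite[Prop.~2.6]{mass}, and log Fano varieties are Mori dream spaces by \cite{BCHM}. You instead verify the Hu--Keel definition directly: $\QQ$-factoriality and finite generation of $\PIC(X)$, semiampleness of the two rays $H$ and $-K_X$ spanning $\Nef(X)$, and the covering of $\MOV(X)$ by $\Nef(X)$ together with the pullback of $\Nef(X^+)$ under the flop. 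This is non-circular (the proofs of Proposition \ref{SLink1} and Theorem \ref{teorema1} do not use the corollary), but it is the long way around: it requires the existence of the flop (via \cite[Thm.~6.14]{KM}, which needs $-K_X$ nef --- essentially the same positivity the paper exploits), the computation of $|11H-3E|$ in Lemma \ref{lemma11he}, and the semiampleness of the nef boundary of $X^+$, which you only gesture at but which does follow since $X^+$ is again the blow-up of $\PP^3$ along a general curve of degree $10$ and genus $11$ (Proposition \ref{SLink1}), so the same base-point-freeness arguments apply. What the paper's approach buys is that the Mori dream space property is available \emph{before} the chamber decomposition is established, so it can structure the subsequent MMP analysis; what your approach buys is an explicit, self-contained description of the Mori chambers, at the cost of front-loading most of Section \ref{sec::3}.

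Two imprecisions worth fixing. First, in Picard rank two the cones $\EFF(X)$, $\Nef(X)$, $\MOV(X)$ are automatically spanned by two rays, but \emph{rationality} of those rays is not automatic; it is supplied here by the explicit computations (Propositions \ref{prop::nefConeGeneral} and \ref{effectivecone}), so you should cite those rather than call it formal. Second, your closing claim that ``every effective divisor has a finitely generated section ring, which is precisely the assertion that $X$ is a Mori dream space'' is not accurate: finite generation of each individual section ring is weaker than the Mori dream space property. The correct conclusion is that you have verified the Hu--Keel conditions themselves (finitely generated Picard group, $\Nef$ generated by finitely many semiample classes, and $\MOV(X)$ covered by the nef cones of the finitely many small $\QQ$-factorial modifications $X$ and $X^+$), which is the definition.
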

\begin{proof}
    The class $-K_X$ is nef and big and therefore $X$ is weak Fano variety. Since $X$ is smooth, by \cite{mass}*{Prop. 2.6} is log Fano. The corollary now follows from \cite{BCHM}*{Cor. 1.3.2}\\
\end{proof}

\medskip\noindent
The following Proposition, via a computation of a tangent space, shows that the family $\mathcal{F}$ is the largest irreducible family that contains linear determinantal quartics.

\begin{prop}\label{familiaF}
The family $\mathcal{F}$ of linear determinantal quartic threefolds in $\PP^4$ is irreducible and generically smooth. Moreover, a general element in $\mathcal{F}$ is the anticanonical model of $X=Bl_C\PP^3$ for an ACM curve $C\in \mathcal{H}_{10,11}$.
\end{prop}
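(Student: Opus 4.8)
The plan is to establish three things: (1) $\mathcal{F}$ is irreducible, (2) it is generically smooth (i.e. smooth at a general point, as a subscheme of $\PP^{69}$), and (3) a general element is the anticanonical model of $X=Bl_C\PP^3$ for some ACM curve $C\in\mathcal{H}_{10,11}$. I would organize the argument around a single construction map and then compute a tangent space, as the statement itself advertises.

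First I would make the parametrization explicit. A $4\times 4$ matrix $A$ of linear forms on $\PP^4$ is an element of the affine space $\mathrm{Hom}(V_4\otimes V_4, V_5)$, where $V_5=H^0(\PP^4,\OO(1))$; this space is irreducible, and the rational map $A\mapsto\{\det A=0\}$ to $\PP^{69}=\PP(H^0(\PP^4,\OO(4)))$ has irreducible image by definition equal to $\mathcal{F}$ (up to closure). Hence irreducibility of $\mathcal{F}$ is immediate from irreducibility of the matrix space, provided the map is dominant onto $\mathcal{F}$, which holds since $\mathcal{F}$ is defined as the closure of this image. So step (1) is essentially formal.

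For step (3) I would run the construction of Section \ref{construction} in reverse and forward. Given a general ACM curve $C\in\mathcal{H}_{10,11}$, Lemma \ref{lem::genInDegX} gives that $\mathcal{I}_C$ is generated in degree $4$, and the ACM condition forces the minimal free resolution of $\mathcal{I}_C$ to have the Hilbert–Burch shape $0\to\OO_{\PP^3}(-5)^4\to\OO_{\PP^3}(-4)^4\to\mathcal{I}_C\to 0$ up to twist; the associated $4\times 4$ matrix of linear entries is precisely the matrix $A$ whose maximal minors cut out $C$, and by the incidence-variety construction (and \cite[Corollary 1.12]{Beauville}, already invoked in Proposition \ref{lem4IsInjective}) the anticanonical model $Y$ of $X=Bl_C\PP^3$ is the linear determinantal quartic $\{\det A=0\}$. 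Conversely, Proposition \ref{lem4IsInjective} shows the general anticanonical model of such an $X$ is a linear determinantal quartic with $20$ nodes, so the two constructions are mutually inverse on general members; this identifies a dense subset of $\mathcal{F}$ with anticanonical models of blow-ups along ACM curves.

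The main obstacle, and the technical heart, is step (2): generic smoothness of $\mathcal{F}\subset\PP^{69}$. Here I would compute the Zariski tangent space to $\mathcal{F}$ at a general point $[Y]$ and compare its dimension to $\dim\mathcal{F}$. The tangent space to the family of quartics through first-order deformations of the matrix $A$ is the image of the differential of $A\mapsto\det A$, namely the space of quartics of the form $\sum_{i,j}(\mathrm{cof}\,A)_{ij}\,\delta a_{ij}$ where $\delta A$ ranges over matrices of linear forms; equivalently it is the degree-$4$ part of the ideal generated by the $3\times 3$ cofactors of $A$. The dimension of $\mathcal{F}$ itself can be computed from the parameter count for the matrix $A$ modulo the $GL_4\times GL_4$ action by row/column operations (an $A$ and $gAh$ give the same quartic up to scalar), yielding $\dim\mathcal{F}=16\cdot 5-2\cdot 16 + \ldots$, which I would pin down precisely. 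Generic smoothness then amounts to showing these two numbers agree, i.e. that $H^1$ of the relevant normal/cofactor sheaf vanishes for general $A$; I would verify this either by a direct cohomological computation using the resolution of $\mathcal{I}_S$ from Lemma \ref{5families}, or by exhibiting an explicit $A$ (as in the \texttt{Macaulay2} computation of Corollary \ref{cor::explicitSemicanonical}) where the tangent space has the expected dimension, since smoothness at one point of an irreducible variety implies generic smoothness. I expect the cofactor-ideal dimension count to be the delicate part, as one must rule out unexpected degree-$4$ syzygies for the general matrix.
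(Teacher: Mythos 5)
Your overall architecture is the same as the paper's: irreducibility from the determinantal parametrization (the paper simply cites \cite[Prop.~1.1]{LLV}), identification of general members with anticanonical models of $Bl_C\PP^3$ via the construction of Section~\ref{construction} together with \cite[Corollary 1.12]{Beauville} and Proposition~\ref{lem4IsInjective}, and smoothness via a tangent-space computation checked on an explicit example in \texttt{Macaulay2} (the paper takes $\dim\mathcal{F}=49$ from \cite[Thm.~1.3]{reivis} instead of your orbit count). However, two of your steps are defective. First, the Hilbert--Burch shape you write is impossible: for a smooth ACM curve $C$ of degree $10$ and genus $11$ one has $h^0(\mathcal{I}_C(4))=35-(40+1-11)=5$, and the resolution is $0\to\OO_{\PP^3}(-5)^4\to\OO_{\PP^3}(-4)^5\to\mathcal{I}_C\to0$, i.e.\ $C$ is cut out by the \emph{five} maximal minors of a $5\times4$ matrix of linear forms. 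Your $0\to\OO(-5)^4\to\OO(-4)^4\to\mathcal{I}_C\to0$ has a cokernel of generic rank $0$, supported on the determinant surface rather than on a curve, and the ``maximal minors'' of a $4\times4$ matrix form the single quartic $\det A$. The $4\times4$ matrix on $\PP^4$ whose determinant is $Y$ arises from this $5\times4$ matrix via the $\Hom$-adjunction of Section~\ref{construction} (with $a=5$, $b=c=4$); indeed the explicit matrix in the paper's proof is $5\times4$.

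Second, and more seriously, your tangent-space logic points the wrong way. The image of the differential of $A\mapsto\det A$ (the degree-$4$ part of the cofactor ideal) is only \emph{contained} in $T_Y\mathcal{F}$: it bounds the tangent space from below, and equality at a general point is automatic in characteristic $0$, so ``the two numbers agree'' cannot certify smoothness at any given $Y$ --- the Zariski tangent space could strictly exceed both the rank of the differential and $\dim\mathcal{F}$. What is needed is an \emph{upper} bound, and the paper obtains it by identifying $T_Y\mathcal{F}$ with $H^0(Y,4H-\Gamma)$, where $\Gamma$ is the set of $20$ nodes: any first-order deformation $F+\epsilon G$ inside a family of $20$-nodal quartics satisfies $G|_\Gamma=0$ (differentiate $F_t(p_t)=0$, $\nabla F_t(p_t)=0$). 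Then smoothness at $Y$ reduces to $\Gamma$ imposing independent conditions on quartics, $h^0(\PP^4,\mathcal{I}_\Gamma(4))=70-20=50$, which is exactly what the Betti table of the explicit example verifies (numerically your computation is the same: $16$ cofactor cubics with $30$ linear syzygies give $16\cdot5-30=50$, but without the containment $T_Y\mathcal{F}\subseteq H^0(\mathcal{I}_\Gamma(4))$ this proves nothing about smoothness). Finally, your parameter count for $\dim\mathcal{F}$ tacitly assumes that the generic fiber of the determinant map is a single $GL_4\times GL_4$-orbit (uniqueness of the determinantal representation) and requires tracking the one-dimensional stabilizer $(\lambda I,\lambda^{-1}I)$; this is a genuine input, which the paper sidesteps by citing \cite{reivis}.
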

\begin{proof} 
The irreducibility of $\mathcal{F}$ is clear (see \cite[Prop. 1.1]{LLV}).

In \cite[Thm. 1.3]{reivis} it is proved that $\mathcal{F}$ has dimension 49. Since a general $Y$ is singular at twenty points $\Gamma:=Sing(Y)$, the Zariski tangent space $T_Y\mathcal{F}$ at a point $Y$ is contained in the space of embedded first-order deformations of $Y$ not smoothing out $\Gamma$. This space $T_Y\mathcal{F}$ is isomorphic to $H^0(Y,\mathcal{I}_{\Gamma/Y}(4))$, where $\mathcal{I}_{\Gamma/Y}$ denotes the ideal sheaf of $\Gamma\subset Y$ \cite[pag.255]{sernesi}. Therefore, $\mathcal{F}$ is smooth at $Y$ provided $\Gamma$ imposes independent conditions on sections of $\OO_Y(4)$. This can be verified computationally. For example, if $C$ is given by the maximal minors of the matrix
\begin{equation*}
    \left(\begin{array}{cccc}
        y& 0& x+y+z+w& y+z+w\\
        y& 0& z& w\\
        y+z& x+y& 0& y+z+w\\
        x& x+w& x& x+z\\
        0& x+y+z& z+w& z+w
    \end{array}\right)
\end{equation*}
inside the projective space $\PP^3$ with coordinates $[x:y:z:w]$, then the singular locus $\Gamma$ of $Y$ consists of 20 double points whose ideal has Betti table
\begin{equation*}
    \begin{array}{c|cccc}
        & 1& 2& 3& 4\\\hline
        2& 16& 30& 16& -\\
        3& -& -& -& -\\
        4& -& -& -& 1
    \end{array}
\end{equation*}
It is now easy to check that $\Gamma$ imposes independent conditions in sections of $\OO_Y(4H)$.

\end{proof}

\begin{rmk}\label{max1}
    The nodes of a general $Y\in \mathcal{F}$ are in a special position. Indeed, the dimension of the intermediate Jacobian $IJ(X)$ is 11, as it coincides with the Jacobian of a smooth curve $C\in \mathcal{H}_{10,11}$. This implies that the nodes of $Y$ fail to impose independent conditions on cubics \cite{cynk}.  Note that Proposition \ref{familiaF} rules out the possibility of $Y$ arising as a specialization, over an irreducible base, of a 20-nodal quartic with nodes in general position: such a quartic would be birationally rigid and, in particular, non-rational \cite{mell}. Hence, $\mathcal{F}$ is the largest family of quartics with twenty nodes and defect $\rho=1$. 
    \end{rmk}

\medskip\noindent
\begin{rmk}\label{moduliX}
Thus far the paper has focused on extrinsic geometry. Intrinsically, we can say the following. The GIT quotient $\mathcal{M}:=\mathcal{F}//\PP GL(5,\mathbb{C})$ is generically smooth of dimension $25$. 
The intrinsic information of a curve $C\in \mathcal{H}_{10,11}$ is encoded in the isomorphism class of $C$ and a complete linear series of degree 10 and dimension 3 on $C$, denoted $g^3_{10}$. Thus, the moduli space of pairs $$\mathcal{W}:=\{(C,g_{10}^3) \ |\ C \mbox{ carries the }g^3_{10}\}$$ admits a map $\xi:  \mathcal{W}\dashedrightarrow \mathcal{M}$, by considering the blow-up $X=Bl_C\PP^3$ and taking its anticanonical model. 
 Observe that $\xi$ is generically finite for dimensional reasons and Proposition \ref{Prop:: isoX}. Corollary B provides a step towards showing that $\xi$ is well-defined throughout $\mathcal{W}$.

More precisely, let $ \mathcal{M}_{11,10}^3\subset \mathcal{M}_{11}$ be the locus of curves of genus 11 that admit a $g^3_{10}$. Projection onto the first coordinate yields the diagram
\begin{equation*}
\xymatrix{ \mathcal{W}\ar[d]_{\pi} \ar@{-->}[r]_{\xi}&  \mathcal{M} \\
 \mathcal{M}_{11,10}^3& } 
\end{equation*}

Observe that $\pi$ is onto.
 Since the residual linear series $|K_C-H|$ to a $g^3_{10}$ is also a $g^3_{10}$, then $\pi$ has degree at least 2. Consequently, there exists a ramification divisor $Ram(\pi)\subset \mathcal{W}$.

The fibers of $\pi$ parametrize the $g^3_{10}$'s on a fixed $C$. Two of them yield the blown-ups of $\PP^3$ that appear in the Sarkisov link of a general $X$. Note that the image of $Ram(\pi)$ under $\xi$ contains isomorphisms classes of quartics that arise from semicanonical curves, $i.e.$ double quadrics, as long as they are GIT-semistable. 

We do not know if the quartics that arise from curves contained in cubic surfaces (see Theorem \ref{teorema2}, (4)) are GIT-semistable and hence define a locus in $\mathcal{M}$. It would be interesting to know if $\xi$ is well-defined over $\mathcal{W}$ and constant on the fibers $\pi^{-1}(C)$, hence inducing an isomorphism $\mathcal{M}^3_{11,10}\cong \mathcal{M}$.
\end{rmk}

\medskip\noindent
We now turn our attention to the divisor class $11H-3E$, which spans an extremal ray in the movable cone $\mathrm{Mov}(X)$. 

\begin{lem}\label{lemma11he}
    The linear system $|11H-3E|$ has dimension 3.
\end{lem}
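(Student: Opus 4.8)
The plan is to push everything down to $\PP^3$ and compute $h^0(\PP^3,\mathcal{I}_C^3(11))$ directly, exploiting that the Euler characteristic is already computable and that an upper bound is in hand. First I would note that, since $C$ is smooth and hence a local complete intersection of codimension two, the blow-up $\pi\colon X\to\PP^3$ satisfies $\pi_*\OO_X(11H-3E)=\mathcal{I}_C^3(11)$ (ordinary and symbolic powers agree for smooth $C$) and $R^q\pi_*\OO_X(11H-3E)=0$ for $q>0$. The vanishing of $R^1$ follows by induction on the order of vanishing from the standard sequences $0\to\OO_X(-kE)\to\OO_X(-(k-1)E)\to\OO_E(-(k-1)E)\to0$, using the surjectivity of the conormal map $\mathcal{I}_C^{k-1}\to S^{k-1}N_C^{\vee}\to 0$ together with the vanishing of $R^1$ of the relative $\OO(1)$ along the $\PP^1$-bundle $E$; higher $R^q$ vanish since the fibres of $\pi$ are at most one-dimensional. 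Leray then gives $H^i(X,11H-3E)\cong H^i(\PP^3,\mathcal{I}_C^3(11))$ for all $i$, so by Lemma \ref{lem::chi} we get $\chi(\PP^3,\mathcal{I}_C^3(11))=\chi(X,11H-3E)=4$.

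Corollary \ref{cor::explicitSemicanonical} already supplies $h^0(\mathcal{I}_C^3(11))\le4$, so the whole content is the reverse inequality $h^0\ge4$; for this it suffices to prove $H^2=H^3=0$, since then $h^0=4+h^1\ge4$, forcing $h^0=4$ and $h^1=0$. The group $H^3(\mathcal{I}_C^3(11))$ vanishes immediately: from $0\to\mathcal{I}_C^3(11)\to\OO_{\PP^3}(11)\to\OO_{3C}(11)\to0$ it is squeezed between $H^2(\OO_{3C}(11))=0$ (the scheme $3C$ cut out by $\mathcal{I}_C^3$ is one-dimensional) and $H^3(\OO_{\PP^3}(11))=0$. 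The same sequence identifies the essential group, $H^2(\mathcal{I}_C^3(11))$, with $H^1(\OO_{3C}(11))$, because $H^1(\OO_{\PP^3}(11))=H^2(\OO_{\PP^3}(11))=0$.

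To annihilate $H^1(\OO_{3C}(11))$ I would filter $\OO_{3C}$ by powers of $\mathcal{I}_C$: by smoothness of $C$ its graded pieces are $\OO_C$, $N_C^{\vee}=\mathcal{I}_C/\mathcal{I}_C^2$ and $S^2N_C^{\vee}=\mathcal{I}_C^2/\mathcal{I}_C^3$, so it is enough to verify $H^1(\OO_C(11))=H^1(N_C^{\vee}(11))=H^1(S^2N_C^{\vee}(11))=0$. The first holds since $\deg\OO_C(11)=110>2g-2=20$. For the other two the key idea, which sidesteps any stability analysis of the normal bundle, is that $\mathcal{I}_C$ is generated in degree $4$ (Lemma \ref{lem::genInDegX}, equivalently the base-point-freeness of $4H-E$ in Proposition \ref{prop::nefConeGeneral}): there is a surjection $\OO_{\PP^3}(-4)^{\oplus5}\to\mathcal{I}_C\to0$. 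Restricting to $C$ yields $\OO_C(-4)^{\oplus5}\to N_C^{\vee}\to0$, and applying the right-exact functor $S^2$ yields $\OO_C(-8)^{\oplus15}\to S^2N_C^{\vee}\to0$. Twisting by $11$ presents $N_C^{\vee}(11)$ and $S^2N_C^{\vee}(11)$ as quotients of $\OO_C(7)^{\oplus5}$ and $\OO_C(3)^{\oplus15}$; since $H^1(\OO_C(7))=H^1(\OO_C(3))=0$ (degrees $70,30>20$) and $H^2$ vanishes on the curve, both $H^1$'s vanish. Running the filtration then gives $H^1(\OO_{3C}(11))=0$, hence $H^2=H^3=0$, hence $h^0=4$ and $\dim|11H-3E|=3$.

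The one genuinely delicate step is the control of $H^2(\mathcal{I}_C^3(11))$, that is, of the non-reduced thickening $3C$; everything else is formal. The maneuver that makes it elementary is realizing the symmetric powers of the conormal bundle as quotients of $\OO_C(-4)^{\oplus\bullet}$, which replaces a positivity/stability question about $N_C$ by the trivial non-speciality of $\OO_C(k)$ for $k\ge3$.
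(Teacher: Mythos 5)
Your proof is correct, but it takes a genuinely different route from the paper's. The paper never leaves $X$: it writes $-3H+E=K_X+H$, kills $H^i(X,-H)$ by Kawamata--Viehweg vanishing plus $\chi(X,-H)=0$, and then climbs the ladder $-H\to 3H-E\to 7H-2E\to 11H-3E$ by repeatedly restricting to a fixed smooth quartic $F\in|4H-E|$ (a K3 surface), using $\chi=0$ on each rung, the fact that the general $C$ lies on no cubic, and intersection-number arguments of the form $(4H-E)\cdot(-aH+bE)\cdot H<0$ to kill the $H^0$ and $H^2$ of the restrictions, arriving at $h^0(X,11H-3E)\ge\chi=4$. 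You instead push down to $\PP^3$ via $R^q\pi_*\OO_X(-kE)=0$ and $\pi_*\OO_X(-3E)=\mathcal{I}_C^3$, identify the essential group as $H^2(\PP^3,\mathcal{I}_C^3(11))\cong H^1(\OO_{3C}(11))$, and kill it by filtering $\OO_{3C}$ by the symmetric powers of the conormal bundle, which degree-4 generation of $\mathcal{I}_C$ presents as quotients of $\OO_C(7)^{\oplus5}$ and $\OO_C(3)^{\oplus15}$; in the end only non-speciality of line bundles of degree $>2g-2=20$ on $C$ is needed. Both arguments share the same skeleton --- the upper bound $h^0\le4$ from Corollary \ref{cor::explicitSemicanonical} and $\chi(X,11H-3E)=4$ from Lemma \ref{lem::chi}, reducing everything to $H^2=H^3=0$ --- but your vanishing argument is more elementary (no Kawamata--Viehweg, no K3 geometry) and slightly more general: since it uses only degree-4 generation and smoothness, it applies verbatim to general $C\in D_1$, covering the parallel computation that Section \ref{S5} omits, whereas the paper's use of ``$C$ lies on no cubic'' is harmless but tied to the ACM case. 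One citation quibble: Lemma \ref{lem::genInDegX} as stated concerns $D_1$ and $D_2$, not the general ACM curve of Section \ref{sec::3}; for that curve the surjection $\OO_{\PP^3}(-4)^{\oplus5}\to\mathcal{I}_C$ follows instead from the Hilbert--Burch resolution $0\to\OO(-5)^4\to\OO(-4)^5\to\mathcal{I}_C\to0$ of an ACM curve of degree 10 and genus 11 --- though the paper makes the same loose citation in the proof of Proposition \ref{prop::nefConeGeneral}, so this is consistent with its conventions.
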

\begin{proof}
    According to Corollary \ref{cor::explicitSemicanonical}, $\dim\ |11H-3E|\le 3$ for a generic curve $C$. Let us show that the opposite inequality also holds. 
    
    First, observe that $-3H+E=K_X+H$,
    where $H$ is nef and big. It follows from the Kawamata-Viehweg vanishing theorem that
    $H^i(X,-H)\cong H^{3-i}(X,-3H+E)=0$
    for every $i<3$. Since $\chi(X,-H)=0$, then $H^i(X,-H)=0$ for all $i$.

    Now consider a general element $F\in|4H-E|$, which we think of as the strict transform of a smooth quartic surface containing $C$. The exact cohomology sequence induced by the following sequence
    $$0\to\OO_X(-H)\to\OO_X(3H-E)\to\OO_F(3H-E)\to 0,$$
    and the vanishing above, imply that
    $$H^i(X,3H-E)\cong H^i(F,3H-E)$$
    for all $i$. Since the general $C$ is not contained in a cubic surface, then these groups are trivial if $i=0$. On the other hand, observe that
    $$H^2(F,3H-E)\cong H^0(F,-3H+E)=0.$$
    Indeed, the restriction of $H$ to the surface $F$ is an ample class and the intersection $(-3H+E)\cdot H$ in $F$ is equal to
    $(4H-E)\cdot(-3H+E)\cdot H=-2<0.$
    Therefore, the restriction of $-3H+E$ to $F$ cannot be effective. Putting all these computations together, along with the fact that $\chi(F,3H-E)=0$ (see Lemma \ref{lem::chi}), implies that $H^i(X,3H-E)=0$ for every $i\ge 0$.
    
    A similar argument, this time using the following exact sequence,
    $$0\to\OO_X(3H-E)\to\OO_X(7H-2E)\to\OO_F(7H-2E)\to0$$
    proves that $H^i(X,7H-2E)=0$ for all $i$.

    Finally, we iterate once more the argument above and consider the sequence
    $$0\to\OO_X(7H-2E)\to\OO_X(11H-3E)\to\OO_F(11H-3E)\to0.$$
    It follows that $H^3(X,11H-3E)=0$ and
    $$H^2(X,11H-3E)\cong H^2(F,11H-3E)\cong H^0(F,-11H+3E)=0,$$
    as we have that
    $(4H-E)\cdot(-11H+3E)\cdot H<0$. We arrive at the inequality
    $$h^0(X,11H-3E)\geq\chi(X,11H-3E)=4.$$
\end{proof}

\begin{prop}\label{SLink1} The generic $Y\in \mathcal{F}$ fits into the following diagram
\begin{equation} \label{diagprop5.8}
    \xymatrix{
        & X \ar@{-->}[rr]^{\sigma} \ar[dr]^{f} \ar[dl]_{\pi} & &X^{+}\ar[dr]^{\pi^{+}} \ar[dl]_{f^+}& \\
        \PP^{3}& & Y& & \PP^3\cong|11H-3E|, 
    } 
\end{equation}
where $\sigma$ is a flop and $\pi^+$ is the blow-up of $\PP^3$ along a generic curve $C^+$ of degree 10 and genus 11.
\end{prop}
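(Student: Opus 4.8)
The plan is to read the entire diagram off the chamber decomposition of $\Mov(X)$, which is finite because $X$ is a Mori dream space: the left half of \eqref{diagprop5.8} is the flop across the wall spanned by $-K_X$, and the right half records the divisorial contractions at the two extremal rays $H$ and $11H-3E$ of $\Mov(X)$. To construct the flop, recall from Proposition \ref{lem4IsInjective} and Lemma \ref{lem::excLocus} that the anticanonical morphism $f:X\to Y$ is small and contracts precisely the twenty $4$-secant lines $\ell_1,\dots,\ell_{20}$ to the nodes of $Y$. Since $-K_X=f^*\OO_Y(1)$, we have $K_X\cdot\ell_i=0$, so $f$ is a flopping contraction. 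As the twenty singularities of $Y$ are ordinary nodes, each $\ell_i$ is the exceptional curve of a small resolution of a node and hence has normal bundle $\OO(-1)\oplus\OO(-1)$; thus $\sigma$ is a composition of twenty Atiyah flops, $X^+$ is smooth, and $f^+:X^+\to Y$ is the induced second small contraction. These maps are exactly the SQM realizing the chamber of $\Mov(X)$ adjacent to $\Nef(X)=[-K_X,H]$ across the wall $-K_X$.

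I would then study the opposite wall $11H-3E$. On $X^+$ this class is nef, hence semiample, and $\dim|11H-3E|=3$ by Lemma \ref{lemma11he}. The induced morphism $\pi^+:X^+\to Z\subseteq\PP^3$ is birational onto a three-dimensional image: the flop raises $(11H-3E)^3$ from its negative value on $X$ to a positive one on $X^+$ through the contributions of the twenty flopped lines, so $Z=\PP^3$ and $\pi^+$ is an extremal divisorial contraction between smooth threefolds of relative Picard number one. Because $h^{2,1}(X^+)=h^{2,1}(X)=g(C)=11>0$, the exceptional divisor of $\pi^+$ cannot be contracted to a point, as blowing up a point in $\PP^3$ produces a threefold with $h^{2,1}=0$. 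By Mori's classification of extremal contractions from a smooth threefold, $\pi^+$ is therefore the blow-up of a smooth curve $C^+\subset\PP^3$.

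Finally I would pin down $C^+$. A flop preserves the canonical class, so $-K_{X^+}=-K_X=4H-E$; writing $H^+:=\pi^{+*}\OO_{\PP^3}(1)=11H-3E$ and using the blow-up formula $-K_{X^+}=4H^+-E^+$ then forces $E^+=40H-11E$, which recovers the degree-$40$ edge of $\EFF(X)$ mentioned in the Introduction. Since flops preserve Hodge numbers, $g(C^+)=h^{2,1}(X^+)=11$. For the degree, $X^+\to Y$ is the anticanonical model, so $(-K_{X^+})^3=\deg Y=4$; expanding $(4H^+-E^+)^3$ via the standard blow-up relations on $X^+$ gives $62-8\deg C^+ +2g(C^+)=4$, whence $\deg C^+=10$. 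That $C^+$ is again generic — in fact abstractly isomorphic to $C$ and embedded by the residual series $|K_C-H|$ — follows from the symmetry of the link together with Remark \ref{moduliX}.

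The main obstacle is the middle step: showing that the contraction attached to the wall $11H-3E$ is genuinely the blow-up of a smooth curve in a smooth $\PP^3$, rather than some other divisorial or small contraction. This rests on two points I would treat carefully. First, the smoothness of $X^+$, which I reduce to the fact that the flopped lines are $(-1,-1)$-curves precisely because the singularities of $Y$ are ordinary nodes. Second, the identification of the image of $\pi^+$ with $\PP^3$, which hinges on the positivity of $(11H-3E)^3$ on $X^+$; establishing this requires tracking how the flop modifies the triple-intersection form on $N^1(X)$, the same bookkeeping that simultaneously yields $E^+=40H-11E$ and $\deg C^+=10$ as a consistency check.
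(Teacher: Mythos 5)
Your overall architecture is the same two-ray game the paper runs, and several of your ingredients are correct and match the paper's: the flop half of the diagram (the paper gets existence from \cite[Theorem 6.14]{KM}, and your $(-1,-1)$ description of the twenty $4$-secants is consistent with Lemma \ref{lem::normalBundleLines}(b)), and your closing numerology ($E^+=40H-11E$, $\deg C^+=10$, $g(C^+)=11$) agrees with Proposition \ref{effectivecone}. But there is a genuine gap exactly at the step you flag as the ``main obstacle,'' and your proposed resolution of it does not work. You assume that $11H-3E$ is nef, hence semiample, on $X^+$ --- equivalently, that $\Nef(X^+)=[-K_X,11H-3E]$ and that the movable cone ends there in a divisorial wall. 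Positivity of $(11H-3E)^3$ after the flop (indeed $-19\mapsto 1$) is necessary but not sufficient: a big class need not be nef, and a priori the second extremal ray of $X^+$ could be of fiber type (conic bundle over a surface, or del Pezzo fibration over a curve), in which case $\Mov(X)$ terminates strictly before $11H-3E$ and no morphism to $\PP^3$ ever appears. Your proposal never excludes these cases, yet that exclusion is the bulk of the paper's actual proof. Moreover, reading the endpoints $H$ and $11H-3E$ off ``the chamber decomposition of $\Mov(X)$'' is circular relative to the paper's logic: the chamber structure (Propositions \ref{effectivecone} and \ref{SBLDgeneric}, and the corollary computing $\Mov(X)$) is \emph{derived from} diagram \eqref{diagprop5.8}, not available beforehand; even the fact that $\mathrm{Bs}(11H-3E)$ is contained in the flopped locus is proved in the paper by looking at this very diagram.

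What the paper does instead, and what would patch your argument: first, the second ray of $\overline{NE}(X^+)$ is $K$-negative (its $K$-trivial curves are exactly $\mathrm{Exc}(f^+)$, which span the other ray), so by \cite[Theorem 1.32]{KM} its contraction is of type C, D, or E. Types C and D are then killed by flop-invariant intersection numerics: writing the strict transform of the relevant divisor as $\widetilde D=a(-K_X)+bE$ and using $(-K_X)^2\cdot E=20$ and $(-K_X)\cdot E^2=20$, the conic-bundle case forces $2a^2+20ab+10b^2=1$ and the del Pezzo case forces $a^2+10ab+5b^2=0$, neither of which has rational solutions. Only then is the contraction known to be birational, and the paper identifies it as E1--E1 with target $\PP^3$ and a smooth curve of degree $10$ and genus $11$ via the tables of \cite{tow}. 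Note that your $h^{2,1}$ argument is a nice, more self-contained substitute for the table lookup --- it cleanly rules out E2 and, since E3--E5 have singular targets while $\PP^3$ is smooth, pins down E1 --- but it only becomes usable \emph{after} the target is known to be $\PP^3$, which again requires the fiber-type exclusion (or the nefness of $11H-3E$ on $X^+$) that your write-up takes for granted. With that exclusion supplied, your endgame goes through and the two proofs essentially merge.
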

\begin{proof}
This proof follows closely ideas from \cite{five}. Since $-K_X$ defines an extremal small contraction $f$, \cite[Thm. 6.14]{KM} implies that the flop $\sigma:X\dashedrightarrow X^+$ exists. Following the notation in \cite{KM}*{Thm. 1.32}, note that $f^{+}$ is exceptional (E) $i.e.$, a divisorial contraction; a conic bundle (C); or a del Pezzo fibration (D).
    
    Suppose that $f^{+}$ is a conic bundle and let $D$ be the pullback of $\OO_{\PP^{2}}(1)$ on $X^{+}$. Then $-K_{X^{+}}\cdot D^{2}=2$. Consider the strict transform of $D$ across the flop $\sigma$, which can be written as $\widetilde{D}=a(-K_{X})+bE$ for some rational numbers $a,b$. Since
    $$(-K_{X})^{2}\cdot E=4d+2-2g=20\qquad \text{and}\qquad (-K_{X})\cdot E^{2}=2g-2=20,$$
    then
    \begin{align*}
        2&= -K_{X^{+}}\cdot D^{2}\\
        &= -K_{X}\cdot \widetilde{D}^{2}\\
        &=-K_{X}\cdot (a(-K_{X})+bE)\\
        &=2(2a^{2}+20ab+10b^{2}).
    \end{align*}
    This equation has no rational solutions. Therefore, this case is not possible.
    
    Now suppose that $f^{+}$ is a del Pezzo fibration and let $D$ be the divisor class of the del Pezzo
    surfaces in the fibration. Then $K_{X^{+}}\cdot D^{2}=0$. Consider the strict transform of $D$ across the flop $\sigma$, $\widetilde{D}=a(-K_{X})+bE$ for some rational numbers $a,b$. Then we similarly have that
    $$0=-K_{X^{+}}\cdot D^{2}=4a^{2}+40ab+20b^{2}=4(a^{2}+10ab+5b^{2}).$$
    Likewise, this equation does not have rational solutions. Then $f^{+}$ is of exceptional type (E) $i.e.$, a divisorial contraction.
    
    Then, following \cite{tow} and its notation, we are in a case of type $E1-Ei$, with $i\in{\{1,2,3,4,5\}}$. According to their tables the cases with $i=2,3,4,5$ are not possible. Therefore, we are in the case $E1-E1$, which implies that $Y^{+}=\PPP$ and $\pi^{+}$ is the blow-up over a smooth curve of degree 10 and genus 11, as claimed.\\
\end{proof}

\medskip\noindent
We now describe the effective cone $\EFF(X)$.

\begin{prop}\label{effectivecone}
    The divisor class $40H-11E$ is effective and rigid. This class spans an extremal ray of the effective cone. In other words, $\mathrm{Eff}(X)=[40H-11E, E]$.
\end{prop}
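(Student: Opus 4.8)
The plan is to exploit the Sarkisov link of Proposition \ref{SLink1}, where $\sigma$ is a flop onto $X^+ = Bl_{C^+}\PP^3$. A flop is an isomorphism in codimension one, so it identifies $N^1(X)$ with $N^1(X^+)$, preserves the effective cone, $\EFF(X) = \EFF(X^+)$, and is crepant, so that $-K_X = -K_{X^+}$ under this identification. Since $X$ is a Mori dream space, $\EFF(X) \subset N^1(X)_{\RR}\cong \RR^2$ is a closed, salient, rational polyhedral cone; it therefore has exactly two extremal rays, and the whole statement reduces to identifying them. First I would locate the two exceptional divisors $E$ and $E^+$ in this common cone.

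The class $E^+$ of the $\pi^+$-exceptional divisor is forced by the link. By the bottom of diagram \eqref{diagprop5.8}, $\pi^+$ is the morphism resolving the map given by $|11H - 3E|$ (Lemma \ref{lemma11he}), so $H^+ := (\pi^+)^*\OO_{\PP^3}(1)$ corresponds to $11H - 3E$. Combining $-K_{X^+} = 4H^+ - E^+$ with $-K_{X^+} = 4H - E$ gives
$$E^+ = 4H^+ - (4H - E) = 4(11H - 3E) - 4H + E = 40H - 11E.$$
This already settles effectivity and rigidity: $E^+$ is an irreducible divisor with $(\pi^+)_*\OO_{X^+}(E^+) = \OO_{\PP^3}$, hence $h^0(X^+, E^+) = 1$, and as $\sigma$ is an isomorphism in codimension one, $h^0(X, 40H - 11E) = 1$. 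Its strict transform on $X$ maps under $\pi$ to a surface of degree $(40H - 11E)\cdot \tilde\ell = 40$ (for $\tilde\ell$ the transform of a general line, disjoint from $C$) vanishing to order $11$ along $C$, whose normalization is the ruled surface $E^+ \to C^+$ promised in the introduction.

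For extremality I would use negative curves. A fiber $\ell$ of $E \to C$ has $E\cdot \ell = -1 < 0$, so any decomposition $E = D_1 + D_2$ into effective classes forces one $D_i$ to be negative on $\ell$, hence proportional to $E$; thus $E$ spans an extremal ray. Symmetrically, a fiber $\ell^+$ of $E^+ \to C^+$ has $E^+\cdot \ell^+ = -1 < 0$, so $40H - 11E = E^+$ spans an extremal ray of $\EFF(X^+) = \EFF(X)$. These two rays are distinct, so they are the two edges of the cone; since $H = \tfrac{1}{40}(40H - 11E) + \tfrac{11}{40}E$ lies in the cone they bound, I conclude $\EFF(X) = [40H - 11E, E]$.

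I expect the genuinely delicate point to be the extremality argument rather than the numerology: one must invoke the Mori dream space property to guarantee that $\EFF(X)$ is a pointed two-dimensional polyhedral cone, so that exhibiting two extremal rays suffices, and then confirm that $E$ and $E^+$ are truly extremal and not interior walls. I would also take care that the identification $H^+ = 11H - 3E$ is consistent with Lemma \ref{lemma11he}, since the computation of $E^+$, and hence the entire proof, rests on it.
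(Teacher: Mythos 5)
Your proposal is correct, and the effectivity half is exactly the paper's argument: both of you read $\sigma^*H^+=11H-3E$ and the crepancy identity $\sigma^*(4H^+-E^+)=4H-E$ off the Sarkisov link of Proposition \ref{SLink1} and solve for $\sigma^*(E^+)=40H-11E$ (you make the rigidity $h^0=1$ explicit via $(\pi^+)_*\OO_{X^+}(E^+)=\OO_{\PP^3}$ and invariance of sections under the flop, which the paper leaves implicit). Where you genuinely diverge is the extremality step. The paper stays on $X$: it takes two general members $F,F'$ of the $3$-dimensional system $|11H-3E|$ (Lemma \ref{lemma11he}), writes $F\cap F'=L\cup C'$ with $L$ the twenty $4$-secants, computes $H\cdot C'=11$ and $E\cdot C'=40$, and concludes $(40H-11E)\cdot C'=0$ for a curve class sweeping out a dense open subset of $X$ — a movable curve pairing to zero, so the class sits on the boundary of the pseudoeffective cone. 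You instead argue on the negative side: the rulings of $E$ and of $E^+$ are moving curves \emph{inside} those irreducible divisors on which the divisors are negative, so each spans an extremal ray; this is precisely the rigidity criterion of Chen--Coskun that the paper itself quotes later (for Proposition \ref{effectiveNefD3}), so your route is sound, and your compressed phrase ``negative on $\ell$, hence proportional to $E$'' is exactly what that lemma supplies after the subtract-and-iterate step. Your version is shorter, handles the extremality of $E$ (which the paper takes for granted) and the rigidity of $40H-11E$ in one stroke, and does not need the dimension count of Lemma \ref{lemma11he} at this stage, at the cost of leaning on the identification $\EFF(X)\cong\EFF(X^+)$ across the flop and on pointedness of the rank-two cone to conclude that two distinct extremal rays bound it. The paper's computation buys something you lose: an explicit covering family of degree-$11$ curves meeting $C$ in $40$ points, dual to the extremal edge, which feeds directly into the stable base locus analysis of Proposition \ref{SBLDgeneric}.
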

\begin{proof}
In order to see that $40H-11E$ is effective, consider the diagram \eqref{diagprop5.8} in Proposition \ref{SLink1}. Let $C^+\subset\PP^3$ denote the curve blown-up by $\pi^+$, and let $H^+$ and $E^+$ be the generators of $\PIC(X^+)$. Then, observe that
\begin{align*}
    \sigma^*H^+=11H-3E& \quad \mbox{and} \quad \sigma^*(4H^+-E^+)=4H-E.
\end{align*}
The second identity means that $\sigma$ sends $K_{X^+}$ to $K_X$. It follows from these equations that $\sigma^*(E^+)=40H-11E$ is effective.

\medskip
Let us now show that $40H-11E$ is extremal. To that end, consider the union of the twenty 4-secant lines to $C$, denoted by $L$.
Fix two general divisors $F, F'$ in the 3-dimensional linear system $|11H-3E|$ (Lemma \ref{lemma11he}). Observe that $$F\cap F'=L\cup C',$$ 
where $C'$ is a curve that sweeps out a dense open subset of $X$ as $F,F'$ vary.  Also, $H\cdot C'=H\cdot\left((11H-3E)^2-L\right)=11,$ and similarly
    $E\cdot C'=40$. This implies that $(40H-11E)\cdot C'=0$, which finishes the proof.

\end{proof}

\begin{prop}\label{SBLDgeneric}
    The stable base locus decomposition of the effective cone
of $X$ consists of the following four chambers:
    \begin{enumerate}[(1)]
        \item 
        In the region $(H, E]$, the stable base locus equals the support of $E$.
        \item In the closed cone $[4H-E, H]$, the stable base locus is empty. 
        \item In the region $[11H-3E, 4H-E)$, the stable base locus equals the union $L$ of the twenty 4-secant lines to $C$.
        \item In the region $[40H-11E, 11H-3E)$, the stable base locus is equal to the support $J$ of the rigid divisor class $40H-11E$.
    \end{enumerate}
\end{prop}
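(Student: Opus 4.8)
The plan is to determine the stable base locus (SBL) on each of the four chambers separately, using the extremal generators of the cones established in Propositions \ref{prop::nefConeGeneral}, \ref{effectivecone} and the flop of Proposition \ref{SLink1} as anchors. Recall that the SBL is constant on the interior of each Mori chamber and can be computed by taking the base locus of sufficiently divisible/positive members; moreover, the SBL of a sum $aD_1+bD_2$ is contained in the intersection of the SBLs of $D_1$ and $D_2$, which lets me bound each chamber from the behavior at its bounding rays. I would first dispense with the two nef chambers. On $[4H-E,H]=\Nef(X)$ the SBL is empty: by Proposition \ref{prop::nefConeGeneral} both generators are base-point free, hence so is every class in the cone they span, giving chamber (2). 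For chamber (1), the region $(H,E]$, I would argue that $E$ is a fixed component. Indeed every class $aH+bE$ with $a>0$ and $b>0$ (writing it as $nH-kE$ with $k<0$) corresponds to hypersurfaces of degree $n$ vanishing to order $\le 0$ along $C$; more directly, since $H$ is base-point free and the only effective representatives of the ray through $E$ are supported on $E$, any effective divisor in the open region $(H,E)$ must contain $E$ in its base locus, and conversely the mobile part is pulled back from $|H|$ which is base-point free, so the SBL is exactly the support of $E$.

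Next I would treat chamber (3), the region $[11H-3E,4H-E)$ lying between the anticanonical ray and the flopping wall. The key input is Lemma \ref{lem::excLocus}, which identifies the $-K_X=4H-E$ exceptional locus as $L$, the union of the twenty $4$-secant lines. Since $-K_X$ is base-point free with $\Exc(f)=L$, every $4$-secant line $\ell$ satisfies $(4H-E)\cdot\ell=0$; on the other ray one computes from Proposition \ref{effectivecone} that $(11H-3E)\cdot\ell=11-3\cdot 4=-1<0$, so each $\ell\subset L$ is in the base locus of every class in this chamber. Thus $L$ is contained in the SBL throughout $[11H-3E,4H-E)$. For the reverse inclusion I would use the flop picture: on $X^+$ the class $\sigma^*H^+=11H-3E$ becomes base-point free (it is the pullback of $\OO_{\PP^3}(1)$ under $\pi^+$), so away from the flopping curves $L$ there are no other base points, and a general member of $|11H-3E|$ meets $X\setminus L$ in a mobile divisor. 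Combining, the SBL on the interior of chamber (3) equals $L$ exactly, and I would note the boundary ray $11H-3E$ itself has empty SBL away from $L$ while the wall $4H-E$ is base-point free, consistent with the half-open description.

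Finally, chamber (4), the region $[40H-11E,11H-3E)$ adjacent to the rigid edge of $\EFF(X)$. By Proposition \ref{effectivecone} the class $40H-11E$ is rigid and extremal with a unique effective representative whose support I call $J$; rigidity means $J$ is a fixed component of every class in the half-open cone bounded by it. Explicitly, writing a class as $a(40H-11E)+b(11H-3E)$ with $a>0,b\ge 0$, the first summand contributes only its fixed divisor $J$ while the second is mobile across the flop; hence the SBL is contained in $J$. For the reverse containment I would observe that any class strictly inside this chamber pairs non-negatively but not base-point-freely, so $J$, being the fixed part inherited from the extremal ray, persists as the stable base locus — in other words the SBL equals $\Supp(J)$. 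The main obstacle I anticipate is the reverse inclusions (showing the SBL is no larger than $L$ in chamber (3) and no larger than $J$ in chamber (4)): the containments of $L$ and $J$ \emph{inside} the SBL are immediate from intersection numbers and rigidity, but proving nothing else is in the base locus requires exhibiting, on the $X^+$ side of the flop and via the explicit $|11H-3E|$ computation of Lemma \ref{lemma11he}, that a general member is smooth away from these loci; this is where the extrinsic geometry of $C$ and the concrete $\PP^3\cong|11H-3E|$ identification of Proposition \ref{SLink1} do the essential work.
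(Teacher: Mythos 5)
Your proposal follows essentially the same route as the paper's proof: base-point-freeness of $H$ and $-K_X$ settles chambers (1)--(2), negativity against the twenty $4$-secant lines combined with the flop diagram of Proposition \ref{SLink1} (so that $Bs(11H-3E)\subseteq Exc(f)=L$) settles chamber (3), and rigidity of $40H-11E=\sigma^*(E^+)$ settles chamber (4). One small correction: the base locus of $aD_1+bD_2$ is contained in the \emph{union} $Bs(D_1)\cup Bs(D_2)$, not the intersection (multiply non-vanishing sections), though your actual chamber-by-chamber arguments never rely on the incorrect intersection statement.
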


\begin{figure}[h] \label{h2} 
    \centering
    \includegraphics[width=0.5\textwidth]{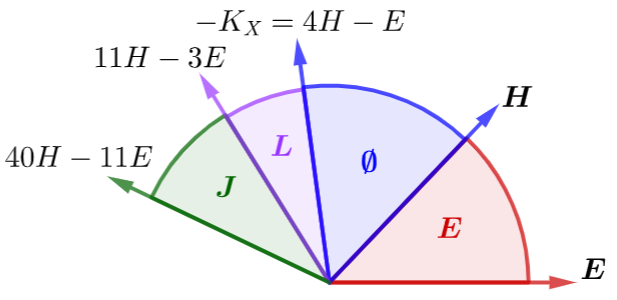}
    \caption{ SBLD of $X=Bl_C\ \PP^3$ for a generic $C$.}
\end{figure}

\begin{proof}\ 
    \begin{enumerate}[(1)]
        \item The class $H$ is base-point free. Then, any linear combination $aH+bE$, with $a,b\ge 0$, has base locus contained in $E$. If $\ell\subset E$ stands for a ruling of $E$, then observe that $(aH+bE)\cdot\ell<0$ as long as $b>0$. This implies that the base locus of this chamber is $E$.

        \item Follows from Proposition \ref{prop::nefConeGeneral}.

        \item Since $K_X$ is nef, the stable base locus $Bs(D)$ is contained in $Bs(11H-3E)$ for any $D\in [11H-3E,-K_X)$. Furthermore the twenty 4-secants to $C$ satisfy $L\subset Bs(11H-3E)$. Then it suffices to show that $Bs(11H-3E)=L$. Looking at Diagram \eqref{diagprop5.8}, we observe that only points in
        $Exc(f)=L$ can be in $Bs(11H-3E)$. 

        \item This follows from the fact that $J=\sigma^*(E^+)$ is contracted by $11H-3E$ (Proposition \ref{effectivecone}). Note that $(40H-11E)\cdot\ell=-4<0$ for any (strict transform of a) 4-secant line to $C$. Therefore, $L\subset J$.
    \end{enumerate}
\end{proof}

\begin{cor}
    The movable cone of $X$ is  $\MOV(X)=[ 11H-3E,H]$.
\end{cor}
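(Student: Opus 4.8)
The plan is to derive the movable cone directly from the stable base locus decomposition established in Proposition \ref{SBLDgeneric}, since the movable cone is by definition the closure of the cone of divisor classes whose stable base locus has codimension at least two (equivalently, contains no divisorial component). First I would recall that a divisor class lies in $\MOV(X)$ precisely when its stable base locus does not contain a divisor; so I simply read off which of the four chambers of Proposition \ref{SBLDgeneric} have this property. In chamber (2), the closed cone $[4H-E,H]$, the stable base locus is empty, and in chamber (3), the region $[11H-3E,4H-E)$, the stable base locus is the union $L$ of the twenty $4$-secant lines, which has codimension two. Both of these chambers therefore contribute to $\MOV(X)$.

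Next I would rule out the remaining two chambers on the grounds that their stable base loci are divisorial. In chamber (1), the region $(H,E]$, the stable base locus is the support of the exceptional divisor $E$, which is a divisor; hence no class strictly inside this chamber is movable. Similarly, in chamber (4), the region $[40H-11E,11H-3E)$, the stable base locus is the support $J$ of the rigid divisor $40H-11E$, again a divisor. Thus classes in the interiors of chambers (1) and (4) are excluded from $\MOV(X)$, and the movable cone is bounded by the rays spanned by $H$ and $11H-3E$.

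Assembling these observations, the movable cone is the union of the nef cone $[4H-E,H]$ (chamber (2)) with the adjacent chamber (3) whose stable base locus is the codimension-two locus $L$; together these give the closed cone $[11H-3E,H]$. The extremal ray $11H-3E$ is genuinely movable rather than merely a boundary artifact: by Lemma \ref{lemma11he} the system $|11H-3E|$ is three-dimensional, and by Proposition \ref{SLink1} it defines the morphism $f^+$ realizing the second small contraction of the flop $\sigma$, so its base locus is exactly $L$ and contains no divisor. Taking the closure, I conclude $\MOV(X)=[11H-3E,H]$.

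The main point requiring care, rather than a genuine obstacle, is the justification that the movable cone is literally the closure of the union of the two interior chambers (2) and (3) together with their common boundary ray $11H-3E$, and that nothing is lost or gained at the boundary walls. This rests on the Mori dream space property established in the Corollary to Proposition \ref{lem4IsInjective}, which guarantees that $\EFF(X)$ decomposes into finitely many rational polyhedral chambers and that $\MOV(X)$ is itself rational polyhedral; given this, identifying $\MOV(X)$ as the union of those stable base locus chambers whose base locus has codimension at least two is immediate, and the boundary ray $11H-3E$ belongs to $\MOV(X)$ precisely because it supports the second birational contraction in diagram \eqref{diagprop5.8}.
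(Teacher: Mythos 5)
Your proposal is correct and follows essentially the same route as the paper, which states this corollary as an immediate consequence of the stable base locus decomposition in Proposition \ref{SBLDgeneric}: chambers (2) and (3) have stable base locus empty or equal to the curve $L$ (codimension two), while chambers (1) and (4) have divisorial stable base loci $E$ and $J$. Your additional justification of the boundary ray $11H-3E$ via Lemma \ref{lemma11he} and diagram \eqref{diagprop5.8}, together with the Mori dream space property, matches the ingredients the paper itself uses to establish chamber (3).
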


\subsection{Birational models of $X$.}
Recall that, given a movable divisor $D$ in $X$, there is an associated \textit{birational model} of $X$ defined as
$$X(D):=\mathrm{Proj}\left( \bigoplus_{m\ge 0}H^0(X,  mD)\right).$$
Let us now describe the different birational models of $X$ induced by each SLBD chamber in $\MOV(X)$. The following theorem is the main result of this section.
\begin{theorem}\label{teorema1}
Let $D$ be an effective divisor in $X$, the blow-up of $\PP^3$ along a general ACM degree 10 and genus 11 curve $C\in\mathcal{H}_{10,11}$. Then, we have the following birational models $X(D)$:
\begin{enumerate}[(1)]
    \item[(0)] $X(H)\cong\PP^3$.
    \item $X(D)\cong X$ for $D\in(-K_X, H)$.

    \item $X(-K_X)\cong Y$ and the map $f:X\rightarrow Y\subset \PP^4$ is a small contraction whose exceptional locus is $\mathrm{Exc}(f)=L$, the union of the twenty 4-secants of $C$.

    \item $X(D)\cong X^{+}$ for $D\in(-K_X,11H-3E)$. The map $f^+: X^{+}\rightarrow Y$ is the flop of $f$, where the flopping locus is $L$, the union of the twenty 4-secants to $C$.

    \item $X(11H-3E)\cong \PP^3$.
\end{enumerate}
\end{theorem}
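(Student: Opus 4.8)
The plan is to read all five models directly off the stable base locus decomposition of Proposition \ref{SBLDgeneric}, invoking the standard principle for a Mori dream space: the movable cone is subdivided into finitely many chambers, on the interior of each of which the model $X(D)$ is constant and equal to the small $\QQ$-factorial modification whose (pulled-back) nef cone is that chamber, while on the walls $X(D)$ is a further contraction. Since $X$ is a Mori dream space and $\MOV(X)=[11H-3E,H]$ is the union of exactly two chambers, namely $\Nef(X)=[-K_X,H]$ (Proposition \ref{prop::nefConeGeneral}) and its neighbor $[11H-3E,-K_X]$, there are only two interior models and three boundary models to identify.

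I would first dispatch the two interior cases. For $D\in(-K_X,H)$ the class is ample, being interior to $\Nef(X)$, so $X(D)\cong X$ by the standard identification of $\Proj$ of the section ring of an ample divisor with the variety itself; this is (1). For (3), I would exploit that the flop $\sigma:X\dashrightarrow X^+$ of Proposition \ref{SLink1} is an isomorphism in codimension one, so that for every divisor $D$ the strict transform $D^+:=\sigma_*D$ satisfies $H^0(X,mD)\cong H^0(X^+,mD^+)$ for all $m\ge0$, whence $X(D)\cong X^+(D^+)$. The relations $\sigma^*H^+=11H-3E$ and $\sigma^*(4H^+-E^+)=4H-E$ from Proposition \ref{effectivecone} carry the open chamber $(-K_X,11H-3E)$ onto the ample cone $(-K_{X^+},H^+)$ of $X^+$; hence $D^+$ is ample and $X(D)\cong X^+(D^+)\cong X^+$.

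The three edges are then handled by direct computation of section rings. Since $H=\pi^*\OO_{\PP^3}(1)$ for the blow-down $\pi:X\to\PP^3$, the ring $\bigoplus_m H^0(X,mH)$ is the homogeneous coordinate ring of $\PP^3$, giving $X(H)\cong\PP^3$, which is (0). For (2), Proposition \ref{lem4IsInjective} already realizes the base-point-free class $-K_X$ through the anticanonical morphism $f:X\to Y\subset\PP^4$, with $\Proj\bigoplus_m H^0(X,-mK_X)$ equal to the image $Y$ and $\mathrm{Exc}(f)=L$ by Lemma \ref{lem::excLocus}; as $L$ is a union of curves, $f$ is small. For (4), the same codimension-one invariance applied to $11H-3E=\sigma^*H^+$ gives $H^0(X,m(11H-3E))\cong H^0(X^+,mH^+)$, and since $H^+=(\pi^+)^*\OO_{\PP^3}(1)$ for the blow-down $\pi^+:X^+\to\PP^3$ of Proposition \ref{SLink1}, this ring is again the coordinate ring of $\PP^3$; thus $X(11H-3E)\cong\PP^3$.

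The only genuinely delicate point is the invariance of section rings across the flop used in (3) and (4): one must know that $\sigma$ is an isomorphism away from loci of codimension at least two on both $X$ and $X^+$, so that sections of $\OO(mD)$ and $\OO(mD^+)$ agree by normality. This is precisely the statement that $\sigma$ is a small modification (a flop), already established in Proposition \ref{SLink1}, so no work beyond recording the identification is needed. Everything else is assembled from Propositions \ref{prop::nefConeGeneral}, \ref{lem4IsInjective}, \ref{SLink1}, \ref{effectivecone}, and \ref{SBLDgeneric}.
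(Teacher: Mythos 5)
Your proposal is correct and follows essentially the same route as the paper: the paper likewise reads the five models off the stable base locus decomposition, using Lemma \ref{lemma::DC} for the two copies of $\PP^3$, Proposition \ref{lem4IsInjective} together with Lemma \ref{lem::excLocus} for $Y$, and the flop of Proposition \ref{SLink1} for $X^+$. If anything, your treatment of item (3) is more complete than the paper's one-line observation that $D\cdot\ell_i<0$ for the twenty $4$-secants, since you explicitly transport the chamber across $\sigma$ using the pullback relations of Proposition \ref{effectivecone} and verify ampleness of $D^+$ on $X^+$, which is the step the paper leaves implicit.
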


\noindent
The following diagram summarizes the previous theorem:
\begin{equation}\label{X1DIAGRAM}
\xymatrix @!=1pc{& X\ar[dl]_{\pi} \ar@{-->}[rr]^{\sigma} \ar[dr]^{f}&  &  X^{+} \ar[dl]_{f^+}\ar[dr]^{\pi^+}&\\
\PP^{3} && Y &&  \PP^3} 
\end{equation}

\begin{lem}\cite[Lem. 3.1]{DC}\label{lemma::DC}
Let $f:X\rightarrow Y$ be a birational morphism, where $X$ and $Y$ are normal projective varieties. If $D\subset Y$ is an ample divisor, then 
$$\mathrm{Proj}\left(\bigoplus_{m\ge 0}H^0(X,mf^*D)\right)=Y. $$
\end{lem}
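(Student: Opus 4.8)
The plan is to identify the graded section ring of $f^*D$ on $X$ with the section ring of $D$ on $Y$, and then invoke the standard fact that an ample divisor reconstructs its ambient variety via $\mathrm{Proj}$. First I would establish that $f_*\OO_X=\OO_Y$. Since $f$ is a proper birational morphism and $Y$ is normal, this is a consequence of Zariski's Main Theorem: the fibers of $f$ are connected and $Y$ is integrally closed, so the natural inclusion $\OO_Y\hookrightarrow f_*\OO_X$ is an isomorphism.

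Next, regarding $D$ as an ample Cartier divisor, $f^*D$ is a well-defined Cartier divisor on $X$ and $\OO_X(mf^*D)\cong f^*\OO_Y(mD)$ for every $m\ge 0$. By the projection formula together with the identity just established,
$$f_*\OO_X(mf^*D)\cong f_*\bigl(f^*\OO_Y(mD)\bigr)\cong \OO_Y(mD)\otimes f_*\OO_X\cong \OO_Y(mD).$$
Since $f$ is proper one has $H^0(X,\cF)=H^0(Y,f_*\cF)$ for any coherent sheaf $\cF$, so taking global sections yields a graded isomorphism
$$\bigoplus_{m\ge 0}H^0(X,mf^*D)\;\cong\;\bigoplus_{m\ge 0}H^0(Y,mD).$$
I would then note that this is an isomorphism of graded \emph{rings}, not merely of graded pieces: the projection-formula identifications are functorial and compatible with the multiplication maps $\OO_Y(mD)\otimes\OO_Y(nD)\to\OO_Y((m+n)D)$, so the ring structures match.

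Finally, because $D$ is ample on the projective variety $Y$, the section ring $\bigoplus_{m\ge 0}H^0(Y,mD)$ has $\mathrm{Proj}$ equal to $Y$; this is essentially the defining property of ampleness, since for $m\gg 0$ the linear system $|mD|$ embeds $Y$ and its homogeneous coordinate ring (after passing to a Veronese subring) recovers $Y$. Combining this with the graded ring isomorphism above gives $\mathrm{Proj}\left(\bigoplus_{m\ge 0}H^0(X,mf^*D)\right)=Y$, as claimed.

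The step I expect to be the main obstacle, or at least the one requiring the most care, is the identity $f_*\OO_X=\OO_Y$: it relies essentially on the normality of $Y$, without which the inclusion $\OO_Y\subset f_*\OO_X$ can be strict and the section rings need not agree. The remaining points are formal, once one is careful that the isomorphism of section rings is multiplicative and that $\mathrm{Proj}$ is insensitive to passing to a Veronese subring.
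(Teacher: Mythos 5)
Your proof is correct and follows essentially the same route as the cited source \cite{DC}, whose argument the paper invokes without reproducing: normality of $Y$ plus properness and birationality give $f_*\mathcal{O}_X=\mathcal{O}_Y$, the projection formula then identifies the section ring of $f^*D$ with that of $D$, and ampleness of $D$ recovers $Y$ as the $\mathrm{Proj}$ of its section ring. Your closing remark correctly isolates the one step where normality is essential, so there is nothing to fix.
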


\medskip
\begin{proof}[Proof of Theorem \ref{teorema1}]
Item (0) follows from Lemma \ref{lemma::DC}, while items (1) and (2) follow from the definition of $X$ and Lemma \ref{lem4IsInjective}. In order to show (3), observe that any $D=-aK_X+b(11H-3E)$, where $a\ge 0, b>0$ satisfies that $D\cdot\ell_i<0$ for each 4-secant $\ell_i$. Item (4) follows from Lemma \ref{lemma11he}. This completes the proof.\\ 
\end{proof}

\begin{rmk}\label{AutK3}
    Let $F$ be a general quartic surface containing a general $C\in \mathcal{H}_{10,11}$. It turns out that the automorphism group of $F$ is $Aut(F)\cong \mathbb{Z}$. In \cite[Thm. 2]{caro} it is proved that the generator of this group arises from a birational map $\varphi\in Bir(\PP^3)$ that preserves $F$.  The map $\varphi$ can be understood in terms of the Sarkisov link (\ref{X1DIAGRAM}), and Remark \ref{AutK3II} describes how this situation changes as $C$ specializes.
\end{rmk}

\subsection{The twenty 4-secants} Let us describe the behavior of the twenty 4-secants of the curve $C$ as the Mori program is run on $X$. We use the notation of the diagram (\ref{X1DIAGRAM}) above. We have that $\sigma^*(H^+)=11H-3E$, where $H^+$ denotes the pullback of the hyperplane section under $\pi^+$. By Bertini's Theorem, a general element $F\in |11H-3E|$ is smooth. Note that a smooth surface $F^+$ in the class $H^+\in \PIC(X^+)$ is isomorphic to the blow-up of a plane $\PP^2\subset(\PP^{3})^+$ at the 10 points of intersection with $C^+$.  Furthermore, the surface $B:=f^+(F^+)\subset \PP^4$ is a \textit{Bordiga surface} and it is isomorphic to $\PP^2$ blown-up at 10 points, embedded in $\PP^4$ with the linear system of quartics through the blown-up points.  Notice that the 4-secants to the curve $C^+$ are not contained in $F^+$, but their images under $f^+$ are points on the Bordiga surface $B\subset Y$. 

\medskip\noindent
Now, the morphism $f$ contracts each twenty 4-secant line $\ell_i$ of $C$, and each of them is a (-1)-curve in $F\in |11H-3E|$. In fact, $\ell_i\subset F$ since $F\cdot\ell_i=(11H-3E)\cdot\ell_i=-1$. By the adjuntion formula:
\begin{equation*}
-2=\ell_i^2+K_F\cdot\ell_i=\ell_i^2+(7H-2E)\cdot\ell_i=\ell_i^2-1.
\end{equation*}
Hence,  the image $f(F)=B\subset \PP^4$ is a smooth Bordiga surface and $F$ is isomorphic to the blow-up of $\PP^2$ at 30 points. Let us summarize the previous discussion.

\begin{prop}
If $F\in |11H-3E|$ is smooth, then $F$ is isomorphic to the plane $\pi^+\circ\sigma(F)\subset\PP^3$ blown-up at 30 points: the 10 points of intersection with the curve $C^+$ and the 20 points of intersection with its twenty 4-secant lines.
\end{prop}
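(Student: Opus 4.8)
The plan is to read off the structure of $F$ by transporting it through the Sarkisov link of diagram \eqref{X1DIAGRAM}, comparing $F$ with its strict transform $F^{+}=\sigma(F)$ across the flop. First I would record that, since $\sigma^{*}H^{+}=11H-3E$, the strict transform $F^{+}$ is a general (smooth) member of $|H^{+}|$ on $X^{+}$. As $\pi^{+}\colon X^{+}\to\PP^3$ is the blow-up along $C^{+}$ and a general hyperplane $\PP^2:=\pi^{+}(F^{+})$ meets $C^{+}$ transversally in $\deg C^{+}=10$ points, restricting $\pi^{+}$ to this plane identifies $F^{+}$ with the blow-up of $\PP^2$ at those ten points; embedded by $f^{+}$ these ten points realize the Bordiga surface $B=f^{+}(F^{+})\subset\PP^4$. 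Because $H^{+}\cdot\ell_i^{+}=1$ for each $4$-secant line $\ell_i^{+}$ of $C^{+}$, the surface $F^{+}$ meets the flopping locus $L^{+}=\mathrm{Exc}(f^{+})$ in finitely many points rather than in a curve, so $f^{+}|_{F^{+}}\colon F^{+}\to B$ is an isomorphism and $B\cong Bl_{10}\PP^2$.

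Next I would analyze $f|_{F}\colon F\to B$ on the $X$-side. Each $4$-secant line $\ell_i$ of $C$ satisfies $(11H-3E)\cdot\ell_i=11-12=-1<0$, so every effective member of $|11H-3E|$ contains $L=\bigcup_i\ell_i$; in particular $L\subset F$. Since $K_F=(K_X+11H-3E)|_F=(7H-2E)|_F$ and $(7H-2E)\cdot\ell_i=7-8=-1$, adjunction gives $\ell_i^2=-1$, so the twenty lines $\ell_i$, which are disjoint for a general $C$, are $(-1)$-curves on the smooth surface $F$. By Lemma \ref{lem::excLocus}, $\mathrm{Exc}(f)=L$, and since $f=f^{+}\circ\sigma$ away from the flopping loci we have $f(F)=f^{+}(F^{+})=B$. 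Thus $f|_{F}$ contracts exactly the twenty disjoint $(-1)$-curves $\ell_i$ and is an isomorphism elsewhere onto $B$; by Castelnuovo, $f|_{F}$ exhibits $F\cong Bl_{20}B$.

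Combining the two steps, $F\cong Bl_{20}(Bl_{10}\PP^2)\cong Bl_{30}\PP^2$, where $\pi^{+}\circ\sigma(F)=\PP^2$ is the plane of the statement. It remains to match the thirty centers with the claimed geometry: the first ten are $\PP^2\cap C^{+}$ from the first step, and the twenty additional centers on $B$ are the images $f(\ell_i)=f^{+}(\ell_i^{+})$. Since $f^{+}|_{F^{+}}$ is an isomorphism and $F^{+}\cap\ell_i^{+}$ is the single point lying over $\ell_i^{+}\cap\PP^2$, these twenty centers correspond, under $\pi^{+}$, exactly to the twenty intersection points of $\PP^2$ with the $4$-secant lines of $C^{+}$.

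The step I expect to be the main obstacle is the bookkeeping across the flop in the last paragraph: one must verify that the twenty points blown up on $B$ genuinely correspond, via the isomorphism $f^{+}|_{F^{+}}$ and the compatibility $f=f^{+}\circ\sigma$, to the plane points $\ell_i^{+}\cap\PP^2$ rather than to some unrelated configuration, and that disjointness of the $4$-secants of a general $C$ guarantees $F$ is a smooth iterated blow-up at thirty distinct points. Everything else follows directly from $\sigma^{*}H^{+}=11H-3E$, Lemma \ref{lem::excLocus}, and the adjunction computation recorded above.
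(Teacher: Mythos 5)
Your proposal is correct and follows essentially the same route as the paper: transport $F$ across the Sarkisov link via $\sigma^*H^+=11H-3E$, identify $F^+$ with the plane blown up at the ten points of $\PP^2\cap C^+$ mapping onto the Bordiga surface $B$, and use the adjunction computation $K_F=(7H-2E)|_F$ to show the twenty $4$-secants are $(-1)$-curves in $F$ contracted by $f$. Your extra bookkeeping (that $f^+|_{F^+}$ is an isomorphism since $F^+$ meets the flopping locus only in points, and that the twenty centers on $B$ correspond under $\pi^+$ to the points $\ell_i^+\cap\PP^2$) makes explicit what the paper leaves implicit, and is a welcome refinement rather than a deviation.
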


\medskip\noindent
The previous 30 points are in special position on the plane. To see this, let us write down the restriction of a class $dH-kE$ in $X$ to $F$ in terms of its description as a blow-up. Let us denote by $\ell$ the pullback to $F$ of $\OO_{\PP^2}(1)$ under the blow-down morphism. Let $s_1,\ldots, s_{20}$ be the classes of the 4-secant lines inside $F$, and $e_1,\ldots,e_{10}$ be the other 10 exceptional divisors. For convenience, we write
$$s:=\sum_{i=1}^{20}s_i,\qquad \mbox{and}\qquad
e:=\sum_{i=1}^{10}e_i.$$

\noindent
Observe the canonical class $K_F$ can be readily computed as follows
$$(7H-2E)|_F=K_F=-3\ell+e+s.$$
On the other hand, the linear series of the restriction of the class $4H-E$ to $F$ yields the morphism into the Bordiga surface inside $\PP^4$. This means that it contracts the exceptional divisor class $s$, and we get
$$(4H-E)|_F=4\ell-e.$$
We summarize this discussion in the following Lemma.
\begin{lem}
    If $F\in|11H-3E|$ is general then $$H|_F=11\ell-s-3e,\quad  E|_F=40\ell-4s-11e.$$
    \end{lem}

\medskip\noindent
    As a corollary of the previous Lemma, we have that the 30 blown-up points to obtain $F$ are in special position. For example, the class $H|_F=11\ell-3s-e$ is not effective for $30$ generic points. Furthermore, the curve $\tilde{C}=E|_F$ satisfies $h^0(S,\mathcal{O}_F(\tilde{C}))=1$, and by the adjunction formula, $\tilde{C}$ has genus 71. Since $\pi$ restricted to $\tilde{C}$ is a $3:1$ ramified cover of $C$, the Riemann-Hurwitz formula predicts 80 ramification points, which are exactly the intersections of $C$ with its twenty 4-secant lines $L$.

\begin{figure}[h]
\centering
\includegraphics[scale=.39]{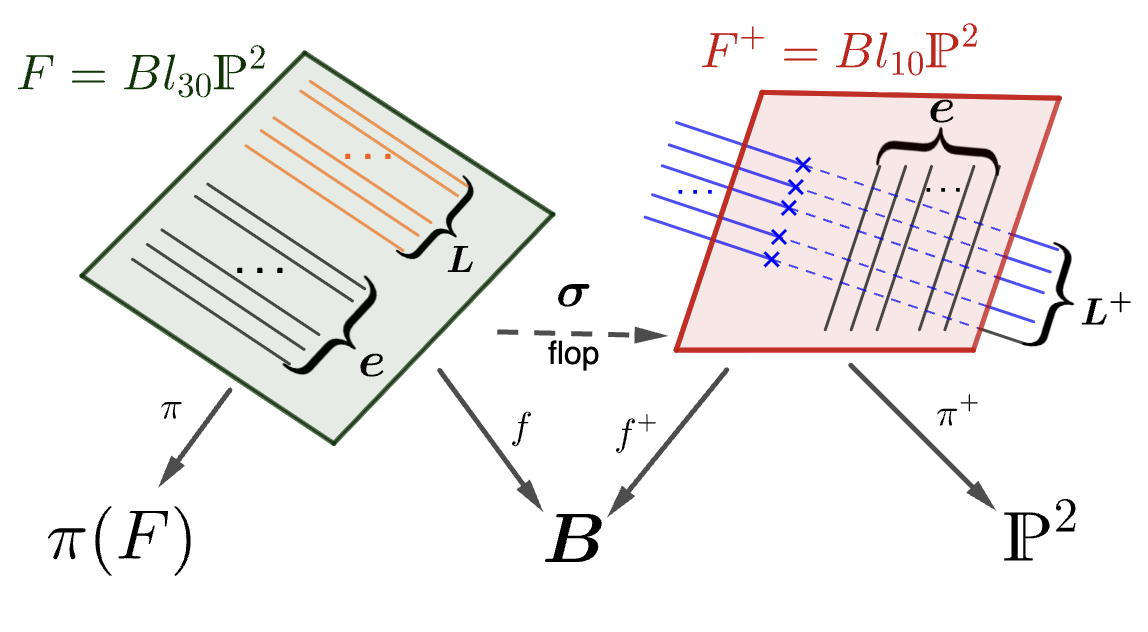}
\caption{Diagram \ref{X1DIAGRAM} restricted to the twenty 4-secant lines to $C$.}
\end{figure}

\section{Mori Program for $Bl_C \PP^3$, with $C$ a semicanonical curve}\label{S5}

\noindent
This section describes the stable base locus decomposition and the birational models of $X=Bl_C\PP^3$ when $C$ is general in the locus of \textit{semicanonical} curves; that is, curves of degree 10 and genus 11 satisfying $\OO_C(2)\cong\omega_C$. Many computations are similar to those in Section \ref{sec::3} and will be omitted.

\medskip\noindent    
Set $X:=Bl_C \PP^3$,  with $C$ a semicanonical curve. By \cite{lamy} $X$ is a weak Fano threefold, which means that $-K_{X}$ is nef and big but not ample. It turns out that it is extremal in the nef cone $\mathrm{Nef}
(X)$. In fact, the arguments of the previous section can also describe the stable base locus decomposition (SBLD) of $X$.

\begin{prop}
    Following the notation \ref{notat}, the stable base locus decomposition of the effective cone
of $X$ consists of the following 4 chambers:
    \begin{enumerate}[(1)]
        \item In the region $(H, E]$, the stable base locus equals the support of $E$.
        \item In the closed cone $[4H-E, H]$, the stable base locus is empty. 
        \item In the region $[11H-3E, 4H-E)$, the stable base locus equals the union of the twenty 4-secant lines of $C$.
        \item In the region $[40H-11E, 11H-3E)$, the stable base locus is equal to the support of $40H-11E$.
    \end{enumerate}
\end{prop}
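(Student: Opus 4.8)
The plan is to follow the proof of Proposition \ref{SBLDgeneric} almost verbatim, checking at each step that the geometric inputs it uses survive when the general ACM curve is replaced by a general semicanonical curve $C\in D_1$. The four chamber walls are the same rays $E$, $H$, $4H-E=-K_X$, $11H-3E$, and $40H-11E$, so the task reduces to re-establishing four facts in the semicanonical setting: (a) $\Nef(X)=[4H-E,H]$; (b) the anticanonical map $f$ is a small contraction with $\mathrm{Exc}(f)=L$, the union of the twenty $4$-secant lines; (c) $\dim|11H-3E|=3$; and (d) $\EFF(X)=[40H-11E,E]$ with $40H-11E$ rigid and contracted by $|11H-3E|$.

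I would dispatch (a)--(c) by observing that each rests only on invariants insensitive to the semicanonical condition, or on the single hypothesis that $C$ is not contained in a cubic surface. For (a), Lemma \ref{lem::genInDegX} gives that $\mathcal{I}_C$ is generated in degree $4$ for general $C\in D_1$, so $-K_X=4H-E$ is base-point free and the argument of Proposition \ref{prop::nefConeGeneral} applies unchanged. For (b), Lemma \ref{lem::excLocus} is stated precisely for the two cases $C$ ACM or semicanonical, so $\mathrm{Exc}(f)=L$ here as well and $f$ is small. Since a general member of $D_1$ is not a member of $D_2$ (Definition \ref{def::typesOfCurves}), such a $C$ lies on no cubic surface --- exactly the hypothesis driving the Kawamata--Viehweg vanishings of Lemma \ref{lemma11he} --- so together with Corollary \ref{cor::explicitSemicanonical} we again obtain $\dim|11H-3E|=3$, giving (c).

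For (d) I would reprove Propositions \ref{SLink1} and \ref{effectivecone} in this setting. The computations ruling out the conic-bundle and del Pezzo-fibration alternatives for the second extremal contraction use only $(-K_X)^2\cdot E=4d+2-2g=20$ and $(-K_X)\cdot E^2=2g-2=20$, which depend solely on $d=10$ and $g=11$; these carry over verbatim, forcing the exceptional case, whence $40H-11E$ is effective and spans an extremal ray of $\EFF(X)$. With (a)--(d) in hand, the four items follow exactly as in Proposition \ref{SBLDgeneric}: chamber (1) from base-point-freeness of $H$ and $(aH+bE)\cdot\ell<0$ on a ruling $\ell\subset E$ for $b>0$; chamber (2) from the nef cone; chamber (3) from the monotonicity of stable base loci along the segment toward $-K_X$ together with $\mathrm{Bs}(11H-3E)=L$; and chamber (4) from $40H-11E$ being rigid and contracted by $|11H-3E|$.

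The main obstacle is (d). Unlike the ACM case, the anticanonical model of $X$ is now a double quadric rather than a quartic (the very feature that justifies treating $D_1$ separately), so I must confirm that this change does not disturb the Sarkisov link: that $f$ remains a small contraction --- hence that the flop exists --- and that the resulting threefold is again $\PP^3$ blown up along a degree-$10$ genus-$11$ curve rather than a different Mori fibre space. Because the smallness of $f$ is guaranteed by Lemma \ref{lem::excLocus} and the discriminating intersection numbers are unchanged, I expect the link to persist; but this is the step that genuinely requires checking rather than direct citation.
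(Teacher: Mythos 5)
Your proposal is correct and coincides with the paper's intended argument: the paper omits the proof of this proposition entirely, stating that the computations of Section \ref{sec::3} carry over, and your explicit verification of the inputs --- degree-$4$ generation of $\mathcal{I}_C$ (Lemma \ref{lem::genInDegX}), $\mathrm{Exc}(f)=L$ (Lemma \ref{lem::excLocus}, which is stated to cover the semicanonical case), $\dim|11H-3E|=3$ (Corollary \ref{cor::explicitSemicanonical} together with the vanishing argument, valid since a general $C\in D_1$ lies on no cubic), and the persistence of the Sarkisov link --- is exactly that justification. The one step you rightly flag as needing genuine checking, the survival of the link when the anticanonical model becomes a double quadric, is precisely what the paper settles in Theorem \ref{thm3} (via Iskovskikh's description of hyperelliptic weak Fano threefolds, with $X^+\cong X$ and the flop a Galois involution), and your observation that the discriminating intersection numbers depend only on $d=10$, $g=11$ is the same mechanism that forces the $E1$--$E1$ case there.
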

\subsection{Birational models of $X$.}
We now describe the birational models of $X$ induced by each SLBD chamber in $\MOV(X)$. The only difference with Theorem \ref{teorema1} is written in the following lemma.

\begin{lem}\label{lemma:: doublecover}
    Let $f_0 : X\to Y_0\subset\PP^4$ be the morphism induced by the linear system $|\!-\!K_X|$. Then $f_0$ is generically $2:1$ onto a quadric threefold. 
\end{lem}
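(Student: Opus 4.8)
The plan is to mirror the proof of Proposition \ref{lem4IsInjective}, replacing the non-hyperelliptic residual curve of the ACM case by a hyperelliptic one. First I would record that $-K_X=4H-E$ is base-point free: since $C\in D_1$ is general, $\mathcal{I}_C$ is generated in degree $4$ by Lemma \ref{lem::genInDegX}, so $f_0$ is a genuine morphism. Fixing a general smooth member $F\in|-K_X|$, which is a K3 surface, and using $H^1(X,\OO_X)=0$ in the cohomology sequence of
$$0\to\OO_X\to\OO_X(-K_X)\to\OO_F(D)\to 0,\qquad D:=(-K_X)|_F,$$
shows that $f_0|_F$ is defined by the complete linear system $|D|$, with $D^2=(-K_X)^3=4$. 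Hence a smooth member of $|D|$ is exactly the residual curve $C'$ of degree $6$ and genus $3$ appearing in Proposition \ref{lem4IsInjective}.

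Next I would identify the restricted map with the canonical map of $C'$. By adjunction on the K3 surface $F$ one has $\OO_{C'}(C')\cong\omega_{C'}$, and the restriction $H^0(F,D)\to H^0(C',\omega_{C'})$ is surjective (again because $H^1(F,\OO_F)=0$); comparing dimensions, $|D|$ cuts out on $C'$ exactly its canonical system, so $\phi_{|D|}$ is birational if and only if $C'$ is non-hyperelliptic. The crucial input is that, for $C\in D_1^\circ$, the residual sextic is \emph{hyperelliptic}: by Theorem \ref{prop::smoothCurves} (via \cite[Thm 4.1]{vite}) the hypothesis $C\in D_1^\circ$ forces $HR(C')\cong\mathbb{K}(2)$, i.e. case (2) of Theorem \ref{thm::HR63}, and such a smooth curve of degree $6$ and genus $3$ is hyperelliptic \cite{am1}. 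By the theory of linear systems on K3 surfaces \cite[Ch. VIII]{Beau} (Saint-Donat), a base-point-free class $D$ with $D^2=4$ whose general member is hyperelliptic defines a $2{:}1$ morphism $\phi_{|D|}\colon F\to Q_F$ onto a quadric surface $Q_F\subset\PP^3$.

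Finally I would globalize by a degree count. The divisor $F$ is the $f_0$-preimage of a hyperplane $\Lambda\subset\PP^4$, so $f_0(F)=Y_0\cap\Lambda$ is the corresponding hyperplane section of $Y_0:=f_0(X)$, and for general $F$ this is a general hyperplane section. Since $f_0|_F$ is $2{:}1$ onto the quadric surface $Q_F=Y_0\cap\Lambda$, the general hyperplane section of $Y_0$ has degree $2$; hence $\deg Y_0=2$ and $Y_0\subset\PP^4$ is a quadric threefold. The relation $\deg f_0\cdot\deg Y_0=(-K_X)^3=4$ then forces $\deg f_0=2$, so $f_0$ is $2{:}1$ onto the quadric $Y_0$, as claimed.

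The main obstacle is the second step: establishing that the residual sextic $C'$ is hyperelliptic precisely in the semicanonical case. I expect to dispatch it exactly as in Proposition \ref{lem4IsInjective}, exchanging the roles of the hyperelliptic and non-hyperelliptic genus-$3$ curves through the correspondence of Theorem \ref{prop::smoothCurves}; once hyperellipticity is in hand, the K3 input \cite{Beau} and the degree count are routine.
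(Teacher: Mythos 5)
Your proof is correct, and it rests on the same essential input as the paper's: the curve residual to a semicanonical $C$ in a complete intersection of two quartics is a hyperelliptic sextic of genus $3$, on which $|-K_X|$ cuts out the canonical system (your adjunction $\OO_{C'}(C')\cong\omega_{C'}$ on the K3 member is the same computation as the paper's $\OO_{C'}(-K_X)=\OO_F(C')|_{C'}=\omega_{C'}$). Where you diverge is in how you globalize the $2{:}1$ behavior and in how hyperellipticity is sourced. The paper obtains hyperellipticity constructively -- it starts from a bidegree $(2,4)$ curve on a smooth quadric and observes that every semicanonical curve is linked to such a curve -- and then concludes directly: the curves $C'$ cover $X$, each maps $2{:}1$ under $f_0$, hence $f_0$ is $2{:}1$, and $(-K_X)^3=4$ forces the image to be a quadric. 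You instead derive hyperellipticity from the Hartshorne--Rao classification ($C\in D_1^\circ$ iff $HR(C')\cong\mathbb{K}(2)$, via Theorem \ref{prop::smoothCurves} and \cite[Thm 4.1]{vite}, together with \cite{am1}), and you globalize through the K3 member $F$: Saint-Donat's dichotomy for a base-point-free class with $D^2=4$ gives that $f_0|_F$ is $2{:}1$ onto a quadric surface, after which the hyperplane-section count gives $\deg Y_0=2$ and the degree formula gives $\deg f_0=2$. Note the order is inverted relative to the paper (you get the quadric first and the degree of $f_0$ second), and your appeal to Saint-Donat is slightly heavier machinery than strictly needed -- since $|D|$ restricts to the canonical system on the hyperelliptic $C'$, the non-birationality of $\phi_{|D|}$ plus $\deg\phi_{|D|}\cdot\deg\phi_{|D|}(F)=4$ and nondegeneracy already force the $2{:}1$ quadric conclusion -- but everything you invoke is standard and the argument is complete. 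What your route buys is a structural explanation (the anticanonical model sits over a quadric because the polarized K3 sections are hyperelliptic in Saint-Donat's sense), at the cost of routing through the HR-module equivalence, whereas the paper's covering-family argument is more elementary and self-contained given its explicit linkage description of $D_1$.
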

\begin{proof}  Since the ideal of the curve $C$ is generated by quartics (Lemma \ref{lem::genInDegX}), then $f_0$ is a morphism. Let us argue that it is generically $2:1$.

Consider a hyperelliptic curve $C'\subset\PP^3$ of bidegree $(2,4)$ in a smooth quadric surface. Given two quartic surfaces $F$ and $F^{\prime}$, without common components and containing  $C^{\prime}$, by \cite{vite}*{Thm. 4.1} we have that $F\cap F^{\prime}=C'\cup C$, where $C$ is a semicanonical curve of degree $10$ and genus $11$. Furthermore, every semicanonical curve is obtained in this fashion. 

Observe that
    $$\OO_{C'}(-K_X)=\OO_{C'}(4H-C)=\OO_{F}(C')|_{C'}=\omega_{C'}.$$
    Therefore, $f_0|_{C'}$ is 2:1 onto a rational curve. Since $X$ is covered by the curves $C'$, this proves that $f_0$ is at least generically 2:1. If $F\in|-K_X|$ is a smooth surface containing such a curve $C'$, observe that $\OO_F(-K_X)=\OO_F(C')$. By \cite[Prop. VIII.3 (iv)]{Beau}, $f_0$ sends $F$ to a degree 2 surface. This, together with $(-K_X)^3=4$, implies $f_0$ is in fact $2:1$, and that the image of $f_0$ is a quadric hypersurface.\\
\end{proof}

\begin{theorem}\label{thm3}
Let $X$ be the blow-up of $\PP^3$ along a generic curve $C\in D_1$. The birational models of $X$ induced by the SBLD of $\EFF(X)$ are the following:
\begin{enumerate}[(1)]
    \item[(0)] $X(H)\cong\PP^3$.
    \item $X(D)\cong X$ for $D\in(H,-K_X)$.
    \item $X(-K_X)= Y$ is a 2:1 ramified cover of a quadric hypersurface $Y_0\subset\PP^4$, and the induced map $f:X\to Y$ has exceptional locus $L$, the union of the twenty 4-secant lines to $C$.
    \item $X(D)= X^{+}\cong X$ for $D\in(11H-3E,-K_X)$. The flopping map $\sigma:X \dashedrightarrow X^{+}$ is a birational Galois involution for $\mu$.
    \item $X(11H-3E)\cong \PP^3$.
\end{enumerate}
\end{theorem}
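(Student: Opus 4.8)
\textbf{Proof proposal for Theorem \ref{thm3}.}
The plan is to follow the template of Theorem \ref{teorema1} and only highlight the points where the semicanonical hypothesis $\OO_C(2)\cong\omega_C$ forces a different conclusion; indeed the SBLD of $\EFF(X)$ was already established for this case in the preceding proposition, so the four chambers, their walls $H$, $-K_X=4H-E$, $11H-3E$, and $40H-11E$, and the stable base loci are identical to the ACM case. Item (0) follows from Lemma \ref{lemma::DC} exactly as before, since $H$ is the pullback of a hyperplane and $X(H)=\mathrm{Proj}\bigoplus_m H^0(X,mH)=\PP^3$. Item (1) follows from the fact that the interior wall $(H,-K_X)$ lies in the ample cone (the nef cone is $[{-}K_X,H]$ and $-K_X$ is nef and big but not ample by \cite{lamy}), so every divisor there is ample and its model is $X$ itself.

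For item (2) the key new input is Lemma \ref{lemma:: doublecover}: the anticanonical model $X(-K_X)$ is still a variety $Y$, but now the morphism $f_0=f:X\to Y_0\subset\PP^4$ is $2:1$ onto a quadric threefold rather than birational onto a quartic. Concretely I would define $Y:=X(-K_X)=\mathrm{Proj}\bigoplus_m H^0(X,-mK_X)$ as the Stein factorization target of $f_0$, so that $f_0$ factors as $X\to Y\xrightarrow{\mu}Y_0$ with $X\to Y$ having connected fibers and $\mu:Y\to Y_0$ the finite degree-$2$ part. The exceptional locus is then read off from Lemma \ref{lem::excLocus} (whose proof covers both the ACM and semicanonical cases): the twenty $4$-secant lines $L$ are $(-K_X)$-trivial, so they are contracted in passing from $X$ to $Y$ and $\mu$ identifies them with the nodes of the branch locus, giving $\mathrm{Exc}(\mu)=L$.

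For item (4), $X(11H-3E)\cong\PP^3$ follows from Lemma \ref{lemma11he} together with Corollary \ref{cor::explicitSemicanonical}: the computation there applies verbatim to a general $C\in D_1$ (the corollary is stated for $C$ general in $\mathcal{H}_{10,11}$ \emph{or} in $D_1$), so $\dim|11H-3E|=3$ and the model attached to the outer wall of $\MOV(X)$ is again a $\PP^3$. The genuinely new statement, and the main obstacle, is item (3): I must show that the flop $\sigma:X\dashrightarrow X^+$ not only exists (which follows from \cite[Theorem 6.14]{KM} since $f$ is a $K_X$-trivial small contraction) but that $X^+\cong X$ and that $\sigma$ is precisely the birational Galois involution covering $\mu$. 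The existence of the involution is structural: since $\mu:Y\to Y_0$ is generically $2:1$, there is a birational deck transformation $\iota$ of $Y$ over $Y_0$, and I would identify $\sigma$ with the lift of $\iota$ to the small resolutions. The delicate part is showing $X^+\cong X$ rather than a genuinely different smooth model: here the Sarkisov-link analysis of Proposition \ref{SLink1} must be re-examined, because the involution forces the two ends of the link to be abstractly isomorphic. I would argue that $\iota$ interchanges the two small resolutions of $Y$ associated to the two rulings of $Y_0$ at each node, so that $\sigma^*$ fixes $-K_X$ and swaps the two boundary rays of $\MOV(X)$; since both rays give the model $\PP^3$ (item (0) and item (4)), the flopped variety $X^+$ is the blow-up of $\PP^3$ along a curve projectively equivalent to $C$, and the involutive nature of $\iota$ then yields $X^+\cong X$. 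Verifying that this self-map is genuinely the Galois involution of $\mu$, rather than merely an abstract isomorphism, is where I expect the real work to lie, and I would pin it down by checking that $\sigma$ acts as $-1$ on the relevant part of $N^1$ while fixing $-K_X$, matching the deck transformation of the double cover $\mu$.
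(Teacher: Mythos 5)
Your proposal is correct in outline and ends at the same diagram as the paper, but it takes a genuinely more self-contained route on the one new point, item (3). Items (0), (1) and (4) are treated identically (the paper says only that the situation is similar to Section \ref{sec::3} and, like you, it relies on Lemma \ref{lemma::DC}, the nef cone, Lemma \ref{lemma11he} and Corollary \ref{cor::explicitSemicanonical}); for items (2)--(3) the paper outsources the work to the literature: after Lemma \ref{lemma:: doublecover}, it invokes Iskovskikh's theory of hyperelliptic Fano threefolds \cite[pag.~776, Thm.~2.2]{isk} to obtain the factorization $f_0=\mu\circ f$ with $Y$ the anticanonical model, $\mu$ finite of degree $2$, $Y_0$ a smooth quadric, and $X$ determined by the pair $(Y_0,R_0)$ with $R_0$ a quartic section of $Y_0$; it then quotes \cite[Lemma~2.8]{jahnke} for exactly the assertion you try to prove by hand, namely that the flop is the lift of the Galois involution and $X^+\cong X$. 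Your Stein-factorization definition of $Y$ and $\mu$ produces the same diagram as Iskovskikh's (since $-K_X$ is semiample, $X(-K_X)$ is the Stein factor of $f_0$), and your deck-transformation strategy is precisely the content of the cited lemma; what your route buys is independence from those two references, at the cost of having to carry out the local analysis yourself. Your use of Lemma \ref{lem::excLocus} for the exceptional locus is legitimate, since its proof in the paper explicitly covers the semicanonical case.

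On that local analysis, two points in your sketch need repair. The phrase ``the two rulings of $Y_0$ at each node'' is misstated: $Y_0$ is a smooth quadric threefold, with no nodes and no two families of planes; the relevant geometry sits at the twenty nodes of $Y$, which lie over the nodes of the branch surface $R_0$ (Proposition \ref{prop:: doublequadric}). There the local model is $w^2=q(x,y,z)$ with $q$ of rank $3$, an ordinary double point whose rank-$4$ tangent cone carries two pencils of planes, and the deck map $w\mapsto-w$ interchanges the two pencils: writing $w^2-q=ab-cd$ with $a=w-x$, $b=w+x$, the involution sends the Weil divisor $\{a=c=0\}$ to $\{b=c=0\}$, whose class in the local class group is the negative. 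Hence $\iota$ switches the small resolution at every node, so $\iota\circ f$ is the flopped contraction, $X^+\cong X$ with $f^+=\iota\circ f$, and $\sigma$ is the lift of $\iota$, an involution satisfying $f_0\circ\sigma=f_0$ --- which is your claimed check that $\sigma$ fixes $-K_X$ and acts as $-1$ on the complementary part of $N^1$. Once this is in place, your intermediate step ``both boundary rays give $\PP^3$, hence $X^+$ is the blow-up of $\PP^3$ along a curve projectively equivalent to $C$'' should be dropped: as stated it does not follow (it would require an E1--E1 classification argument as in Proposition \ref{SLink1}), and it is superseded by the node-by-node swap.
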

\noindent
The following diagram summarizes the previous result.
\begin{align*}
\xymatrix @!=1pc{ 
 & X\ar[ddl]_{\pi} \ar[ddr]_{f_0} \ar@{-->}[rr]^{\sigma} \ar[dr]^{f}&  &  X^{+} \ar[dl]_{f^{+}} \ar[ddr]^{\pi^+}&\\
& &Y \ar[d]^{\mu} &  \\
\PP^{3}\ar@{-->}@/_0.6cm/[rrrr]& & \ \ \ \ \ \ Y_0\subset \PP^{4} & &\PP^3}
\end{align*}

\medskip
\begin{proof} The situation is similar to the previous section, except for item (3), which is what we address below. 

By Lemma \ref{lemma:: doublecover}, the linear system $|-K_X|$ induces a $2:1$ map onto a quadric $Y_0$ in $\PP^4$. It follows from \cite[pp. 776]{isk}, that $X$ is \textit{hyperelliptic}, $Y_0$ smooth and one has a commutative diagram
$$ \xymatrix{
X \ar[rr]^{f}\ar[rd]^{f_0}& & Y\ar[ld]_{\mu}\\
& Y_0
}  $$
where $f_0$ denotes the anticanonical morphism, $Y$ is the anticanonical model and $\mu$ a finite morphism of degree $2$. By \cite[Thm. 2.2]{isk} $X$ is uniquely determined by the pair $(Y_0, R_0)$, where $R_0\subset Y_0$ is the branch divisor of $\mu$. In this case, $R_0=Y_0\cap V$, where $V$ is a quartic in $\PP^{4}$. 

The rest of item (3) follows from the previous paragraph and \cite[Lem. 2.8]{jahnke}.\\
\end{proof}

\medskip\noindent
The birational geometry in Theorem \ref{thm3} is similar to that in Theorem \ref{teorema1}, but there are two noteworthy differences which we now describe. First, there is no irreducible surface of degree eight containing the 4-secant lines to a general $C\in \mathcal{H}_{10,11}$.

\begin{cor}\label{Ram0}
There exists an irreducible surface in $\PP^3$ of degree 8 and double along $C$, which  contains the 4-secant lines to $C$.
\end{cor}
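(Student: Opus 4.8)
The plan is to exhibit the degree-8 surface as the image in $\PP^3$ of a distinguished effective divisor on $X$, exploiting the extra section that the Hartshorne-Rao module of a semicanonical curve provides. Recall from \eqref{eq::rank1HR} that for $C\in D_1^\circ$ one has $HR(C)\cong\mathbb{K}(-2)$, so that $h^1(\mathcal{I}_C(2))=1$; this single nonzero graded piece is precisely what forces the existence of a surface not available for a general ACM curve. First I would identify the relevant divisor class on $X=Bl_C\PP^3$: a surface of degree $8$ double along $C$ corresponds to the class $8H-2E$, so the statement amounts to producing an irreducible effective divisor in $|8H-2E|$ whose support contains the strict transforms of the twenty 4-secant lines $L$.

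The key computation is to show $|8H-2E|$ is nonempty and, in fact, larger than one would expect generically. I would compute $h^0(X,8H-2E)=h^0(\PP^3,\mathcal{I}_C^2(8))$ and compare it with the Euler characteristic from Lemma \ref{lem::chi}, which gives $\chi(X,8H-2E)=\binom{11}{3}-5\cdot 2(16+8-8-2+1)=165-150=15$. The point is that for a semicanonical $C$ the dimension jumps relative to the ACM case because of the nonvanishing $H^1(\mathcal{I}_C(2))$; concretely, the surface $8H-2E$ can be built as $2\cdot(4H-E)$ supplemented by the extra section coming from $HR(C)$, and one checks it is not merely twice the anticanonical class. The plan here is to run the same Kawamata–Viehweg/restriction-to-$F$ bootstrap used in Lemma \ref{lemma11he}, restricting along a smooth $F\in|4H-E|$ and using the sequence $0\to\OO_X(4H-E)\to\OO_X(8H-2E)\to\OO_F(8H-2E)\to 0$, but now tracking the contribution of the nonzero $H^1$ that distinguishes $D_1$ from the ACM locus.

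Next I would argue irreducibility and that the surface is genuinely \emph{double} along $C$ rather than, say, a union of two distinct quartics through $C$. Irreducibility should follow because a reducible member would force a surface of degree $\leq 4$ singular along $C$ or a splitting into quartics, and the quartics through a semicanonical $C$ form a net by Lemma \ref{lem::genInDegX} (it is generated in degree $4$); a general member of $|8H-2E|$ cannot be a product of two such quartics for dimension reasons once the $H^1$-contribution is accounted for. That the surface contains the 4-secant lines is automatic from the SBLD: the class $8H-2E=2(4H-E)$ lies on the ray through $4H-E=-K_X$, and by the semicanonical analogue of Proposition \ref{SBLDgeneric} every divisor in the chamber $[11H-3E,4H-E)$ has the lines $L$ in its base locus, so the strict transforms of all twenty 4-secants lie on the surface.

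The main obstacle I anticipate is establishing that the surface is \emph{irreducible} and carries multiplicity exactly $2$ along $C$, as opposed to being reducible or more singular; this is where the rank-one Hartshorne–Rao module must be used decisively rather than formally. The cohomology jump guarantees a section of $8H-2E$ exists beyond the obvious symmetric products of quartics, but pinning down that a general such section is an integral surface with an ordinary double structure along $C$ will require a local analysis of $\mathcal{I}_C^2$ near $C$ together with the genericity of $C\in D_1$. I would complete the argument by a direct \texttt{Macaulay2} verification in the spirit of the explicit curve of Subsection on ``An explicit curve in $D_1$'': compute $\mathcal{I}_C^2$ in degree $8$ for the semicanonical $C$ given by the linkage $D,F_1,F_2$ there, confirm $h^0=15$, and check that a general member is irreducible and smooth away from $C$, thereby certifying the existence claim.
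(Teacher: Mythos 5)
Your proposal has a genuine gap, and it is located exactly at the step you treat as ``automatic.'' The class $8H-2E=-2K_X$ lies on the ray through $-K_X=4H-E$, which sits in the chamber $[4H-E,H]$ of the stable base locus decomposition, where the base locus is \emph{empty} --- not in the half-open chamber $[11H-3E,4H-E)$, which excludes the ray of $4H-E$ itself. Concretely: since $\mathcal{I}_C$ is generated in degree $4$ for general $C\in D_1$ (Lemma \ref{lem::genInDegX}), $|-K_X|$ is base-point free, hence so is $|8H-2E|$, and its \emph{general} member does not contain the $4$-secant lines: under the double cover $f_0:X\to Y_0$ of Lemma \ref{lemma:: doublecover}, the general member is the pullback of a quadric section of $Y_0$, which misses the twenty points to which the $4$-secants are contracted. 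So no dimension count for $h^0(X,8H-2E)$ can by itself produce the desired surface; one must isolate a single special member. Your ``cohomology jump'' mechanism is also miscounted: $\chi(X,8H-2E)=15$ in both the ACM and semicanonical cases, and $h^0=15$ in both cases as well (for ACM $C$, quadrics of $\PP^4$ restrict injectively to the quartic $Y$; for semicanonical $C$, the projection formula with $f_{0*}\OO_X=\OO_{Y_0}\oplus\OO_{Y_0}(-2)$ gives $14+1$). There is no jump in $h^0$; what changes is the rank of the multiplication map $\mathrm{Sym}^2H^0(X,-K_X)\to H^0(X,-2K_X)$, whose kernel becomes one-dimensional precisely because the anticanonical image is a quadric, and the one section not coming from $\mathrm{Sym}^2$ is the anti-invariant section cutting out the ramification divisor. (A smaller slip: quartics through $C$ form a $\PP^4$, as $h^0(\mathcal{I}_C(4))=5$; they are not a net, and Lemma \ref{lem::genInDegX} says nothing about their number.)

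The paper's proof sidesteps all of your acknowledged ``main obstacles'' by naming the special member directly: it takes $R$ to be the ramification divisor of the anticanonical double cover $f_0:X\to Y_0$ from Theorem \ref{thm3}. Its class is $8H-2E$ because the branch divisor is $R_0=Y_0\cap V$ with $V$ a quartic and $f_0^*\OO_{\PP^4}(1)=\OO_X(4H-E)$; it is irreducible because $f|_R:R\to R_0$ is birational and $R_0$ is irreducible for general $C\in D_1$; and it contains $L$ because $f_0$ is finite of degree $2$ away from $R$, so any curve contracted by $f$ must lie in $R$. Pushing forward by $\pi$ gives the degree-$8$ surface, double along $C$, containing the twenty $4$-secants. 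Note also that your concluding \texttt{Macaulay2} plan --- checking a \emph{general} member of $\mathcal{I}_C^2$ in degree $8$ --- would certify a surface that does not contain the $4$-secant lines; to salvage a computational route you would have to target the specific anti-invariant section, i.e., reconstruct $R$ anyway.
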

\begin{proof}
Using the notation from Theorem \ref{thm3}, the strict transform $R$ of the ramification divisor of $\mu$ has class equal to $8H-2E\in \mathrm{Pic}(X)$. Since $f|_R:R\to R_0$ is birational, and $R_0$ is irreducible for the general $C\in D_1$, so is $R$.

Each of the 4-secant lines $\ell_i$ to $C$ is contracted by both $f$ and $f_0$. Since $X$ is weak Fano, then $Y$ is normal and thus $f^{-1}(f(\ell_i))=\ell_i$. Therefore, $\mu^{-1}(f_0(\ell_i))=f(\ell_i)$ is a single point. This implies that the union $L=\bigcup_{i=1}^{20}\ell_i\subset X$ is contained in $R$. Therefore the image of $R$ under $\pi$ is an irreducible surface of degree 8 and double along $C$, that contains the twenty $4$-secant lines to $C$.\\
\end{proof}

\begin{lem}\label{lem::smoothRam}
    If $C\in D_1$ is general, then $R\subset X$ from the previous proof is smooth.
\end{lem}

\begin{proof}
Let $R'=\pi(R)\subset\PP^3$. We claim that, for a general $C\in D_1$, the following properties are satisfied:
    \begin{enumerate}[a)]
        \item the singular scheme $K_1:=Sing(R')$ is supported on $C$, and
        \item the singular scheme $K_2:=Sing(K_1)$ is reduced and zero-dimensional.
    \end{enumerate}
    Suppose that some $C\in D_1$ satisfies these properties. Let us prove that blowing-up $C$ resolves the singularities of $R'$.

    Let $p\in C$. Consider the completion $\hat{\OO}_{\PP^3,p}$ of the local ring of $\PP^3$ at $p$, which is isomorphic to the ring of power series $\mathbb{C}[[x,y,z]]$. One can find an isomorphism $\hat{\OO}_{\PP^3,p}\cong\mathbb{C}[[x, y, z]]$ such that $C$ is defined infinitesimally by the ideal $(x,y)$. In these coordinates, the defining equation of $R'$ is a power series of the form $$F=\sum_{i+j+k=2}^\infty a_{ijk}x^iy^iz^k.$$
    Since $R'$ is singular along $C$, we must have $a_{ijk}=0$ whenever $i+j\leq1$.

    It is a straightforward computation to verify that $F$ can be written (up to an additional change of coordinates) by a power series of the form
    \begin{equation*}
        F=\left\{\begin{array}{cc}
            x^2-y^2+\textrm{terms of degree}>2 & \textrm{if }p\in C\setminus K_2,\\[2mm]
            y^2+\lambda x^2z+\textrm{terms of degree}>2 & \textrm{if }p\in K_2.
            \end{array}\right.
    \end{equation*}
    Above, the terms of degree 3 or higher belong to the ideal $(x,y)^2$; and $\lambda\neq0$ in the second case. In both cases, we can verify the smoothness of $R$ along the exceptional fiber over $p$, by looking at the affine chart $(x, y, z)=(x, ux, z)$ of the blow-up of $C$.

    Finally, let us use \texttt{Macaulay2} to show that $C\in D_1$ in Corollary \ref{cor::explicitSemicanonical} satisfies properties a) and b). The code immediately above the statement of Corollary \ref{cor::explicitSemicanonical}, exhibits a curve in $D_1$. The following code now constructs the surface $R'$ and verifies properties a) and b), hence the surfaces $R$ is smooth. Since the condition that $R$ is smooth is an open condition, the general $C\in D_1$ satisfies these conditions as well, as claimed.
    \begin{verbatim}
    L = saturate(ideal(C3_0, C3_1, C3_2, C3_3), C);
    R' = ideal(L_0);
    K1 = saturate trim ideal singularLocus R';
    K2 = saturate trim ideal singularLocus K1;
    
    saturate(K1, C) == ideal(1_P3)		-- K1 is supported on C
    K2 == radical K2			-- K2 is reduced
    codim K2 == 3				-- K2 is zero-dimensional
    \end{verbatim}
\end{proof}
\begin{rmk}
    If $C\in D_1$ is not assumed general, then the previous Lemma is false. For example, in the construction above, consider the unique (locally Cohen-Macaulay) curve $C_0$ in a smooth quadric surface representing the divisor $2L_1+4L_2$, where $L_1$ and $L_2$ are lines in different rulings of the quadric. In this setting, $C\in D_1$ can be smooth semicanonical and $R$ has 2 singular points.
\end{rmk}

\medskip\noindent
By Lemma \ref{lemma:: doublecover}, the linear series $|-K_X|$ yields a double quadric $X\to Y_0\subset \PP^4$, whose branch surface arises in Corollary \ref{Ram0}, and is described below.

\begin{prop}\label{prop:: doublequadric}
    The double quadric $f_0:X\to Y_0\subset \PP^4$ is branched along a degree 8 surface with twenty nodes in a special position.
\end{prop}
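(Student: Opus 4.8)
The plan is to read off all three assertions from the double-cover structure established in Lemma \ref{lemma:: doublecover} and the proof of Theorem \ref{thm3}. Write $H_0$ for the hyperplane class of $Y_0\subset\PP^4$, so that $f_0^*H_0=-K_X=4H-E$, and note $K_{Y_0}=-3H_0$ since $Y_0$ is a smooth quadric threefold. The ramification formula $K_X=f_0^*K_{Y_0}+R$ then gives $R=-(4H-E)+3(4H-E)=8H-2E$, in agreement with Corollary \ref{Ram0}. Because $f_0^*R_0=2R=16H-4E=f_0^*(4H_0)$, the branch surface satisfies $R_0\in|4H_0|$; that is, $R_0=Y_0\cap V$ for a quartic $V\subset\PP^4$, whence $\deg R_0=\deg Y_0\cdot\deg V=8$. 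This settles the first assertion.

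For the twenty nodes I would argue through the local model of the double cover. Over the smooth threefold $Y_0$ the map $\mu\colon Y\to Y_0$ is locally $\{y^2=g\}$, where $g=0$ is a local equation of $R_0$; the singular locus of the total space is exactly $\{y=0\}\times\mathrm{Sing}(R_0)$, and $Y$ has an ordinary node over a point $p\in R_0$ if and only if $R_0$ has an ordinary node at $p$ (the Hessian of $y^2-g$ is nondegenerate iff that of $g$ is). It therefore suffices to locate twenty ordinary nodes of $Y$ lying on the ramification locus. These are supplied by the twenty $4$-secant lines: each $\ell_i$ satisfies $-K_X\cdot\ell_i=(4H-E)\cdot\ell_i=0$, so $f_0$ contracts $L=\bigcup_i\ell_i$ to twenty points of $Y$; since $f\colon X\to Y$ is the small anticanonical contraction of these curves, $Y$ acquires an ordinary node at each image point, exactly as in the ACM case. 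Finally $L\subset R$ by Corollary \ref{Ram0}, so these twenty nodes lie on the ramification divisor and descend to twenty ordinary nodes of $R_0$.

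The special position of the nodes I would deduce exactly as in Remark \ref{max1}. The threefold $X$ is a small resolution of the double quadric $Y$, and its intermediate Jacobian $IJ(X)$ equals the Jacobian of $C$, so $h^{2,1}(X)=g(C)=11$. A smooth double cover of a quadric threefold branched along a quartic section is a Fano threefold of genus $3$ and degree $(-K)^3=4$, with $h^{2,1}=30$, just like a smooth quartic. Comparing Hodge numbers across the small resolution of the $20$-nodal $Y$ gives a defect $\delta=11-(30-20)=1>0$; by \cite{cynk} this is precisely the failure of the twenty nodes to impose independent conditions on cubics, so the nodes are in special position.

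The main obstacle is the second step: one must be sure that each of the twenty contracted $4$-secants produces a genuine ordinary double point of $Y$ (and not a worse singularity), so that the local double-cover computation applies and yields honest nodes of $R_0$. This is where the identification of the flop $\sigma$ with the Galois involution of $\mu$ in Theorem \ref{thm3} is used: the flopping curves are $(-1,-1)$-curves, which guarantees that $f$ introduces ordinary nodes and matches the node count on $R_0$ with the twenty lines of $L$.
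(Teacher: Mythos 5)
Your proposal is correct, but it reaches each of the three assertions by a genuinely different route from the paper's proof. For the degree, you use the ramification formula $K_X=f_0^*K_{Y_0}+R$ together with $f_0^*R_0=2R$ to place $R_0$ in $|4H_0|$; the paper instead computes directly $\deg R_0=(4H-E)^2\cdot(8H-2E)=8$ --- equivalent, one line each. For the nodes, the paper never leaves the surface: it notes that $R$ is smooth for general $C\in D_1$ and that each $4$-secant line is a $(-2)$-curve on $R$ by adjunction ($-2=\ell_i^2+K_R\cdot\ell_i=\ell_i^2$, since $K_R=(4H-E)|_R$ pairs to zero with $\ell_i$), so $f_0|_R$ contracts twenty $(-2)$-curves and $R_0$ acquires twenty double points; you instead go through the threefold --- nodes of $Y$ at the images of the $(-1,-1)$-curves, $L\subset R$ from Corollary \ref{Ram0}, and the local model $y^2=g$ to push the nodes down to $R_0$. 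Your route is valid: the $(-1,-1)$ statement you lean on is exactly Lemma \ref{lem::normalBundleLines}(b), whose proof applies for $C\in D_1$ because $|11H-3E|$ has smooth members there (Lemma \ref{lemma11he}, Corollary \ref{cor::explicitSemicanonical}); but it costs the Hessian bookkeeping and the node-type analysis on $Y$, all of which the paper's one-line adjunction computation on $R$ sidesteps. For the special position, the paper gets the defect for free --- $\rho(Y)=1$ since $f$ is a small contraction and $\mathrm{rank}\,\PIC(X)=2$ --- and plugs into $\rho(Y)=|\mathrm{Sing}(R_0)|+h^0(\PP^4,\mathcal{I}(3))-35$ to find $h^0(\PP^4,\mathcal{I}(3))=16>15$; you recover the same defect Hodge-theoretically from $h^{2,1}(X)=g(C)=11$ versus $h^{2,1}=30$ for a smooth double quadric, which is a nice consistency check parallel to Remark \ref{max1}. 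Two small caveats on your version, neither a gap in substance: \cite{cynk} as cited treats nodal hypersurfaces in $\PP^4$, and the double quadric is not one, so you should invoke the analogous defect formula for this family (the one the paper uses) rather than Cynk verbatim; and the value $h^{2,1}=30$ for the smooth genus-$3$ double quadric, while standard, deserves a reference.
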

\begin{proof}
    Observe that $R_0=f_0(R)\subset\PP^4$ is the branch divisor of $f_0$. Its degree is $deg(R_0)=(4H-E)^2\cdot(8H-2E)=8$.

    If $C\in D_1$ is general, $R$ is smooth. Each 4-secant line $\ell_i$ to $C$ is a $(-2)$-curve of $R$ by adjunction, as $$-2=\ell_i^2+K_R\cdot\ell_i=\ell_i^2.$$
    Therefore, $R_0$ is an irreducible surface of degree 8 with twenty singular double points. 
    
    Note that the defect of $Y$ is $\rho(Y)=1$ as $f$ is a small contraction and $rank \ Pic(X)=2$. On the other hand, $$\rho(Y)= |Sing(R_0)|+ h^0(\PP^4,\mathcal{I}(3)) -35,$$ where $\mathcal{I}$ denotes the ideal sheaf of the singular locus of $R_0$, which consists of twenty points. Consequently, $h^0(\PP^4,\mathcal{I}(3))=16$, which means that the singular points of $R_0$ fail to impose independent conditions on cubics.\\
\end{proof}

\medskip\noindent
A double quadric $X\to Y_0\subset \PP^4$ as above is ramified along a surface with twenty nodes that fail to impose independent conditions on cubics by Proposition \ref{prop:: doublequadric}. 
Double quadrics with these twenty nodes in general position exist, and are birationally rigid \cite{sha}. One might then expect by Proposition \ref{prop:: doublequadric} that birational rigidity fails by specializing the nodes. However, the situation is more complicated as the former double  quadric $Y$ satisfies that $H_4(Y,\mathbb{Z})=\ZZ^2$, while the latter (birationally rigid) has $H_2(Y,\ZZ)\cong\ZZ$. Therefore, they are not diffeomorphic, making a deformation between them impossible.

\medskip\noindent
A multiplicity 2 quadric $Y_0^2$ is an element in $\mathcal{F}$. 
However, the anti-canonical model of $X$, a 2:1 ramified cover of $Y_0$, is determined by the pair $(Y_0,R_0)$ which is not in $\mathcal{F}$. Thus, the specialization of a linear determinantal quartic to a double quadric takes place in a birational modification of the family $\mathcal{F}$. 

\medskip\noindent
This resembles a well-known situation for curves. Let $Y_1\subset \PP^2$ be a smooth plane curve of degree 4, and consider the family $$\mathcal{Y}_t= tY_1+Y_0^2,$$ where $Y_0$ is a fixed smooth conic. The stable limit of $\mathcal{Y}_0$ in the moduli space $\mathcal{M}_3$ is a smooth hyperelliptic curve $Y$. This curve arises as a double cover $Y\to Y_0$ ramified at the eight points $R_0=Y_1\cap Y_0$. The pair $(Y_0,R_0)$ is what determines the stable limit, and belongs to a birational modification of the space of quartic curves $\PP^{14}$.

\begin{rmk}\label{AutK3II}
The ideal $\mathcal{I}_C$ of a general curve $C\in \mathcal{H}_{10,11}$ can be generated by smooth quartics whose Picard rank is 2 and whose automorphism group isomorphic to $\ZZ$ (Remark \ref{AutK3}). A smooth curve $C'$ satisfying $\omega_{C'}=\OO_{C'}(2)$ is also cut out by smooth quartics, but in this case, their Picard rank is 3, and the automorphism group of a general such surface is $$\ZZ/2\ZZ*\ZZ/2\ZZ*\ZZ/2\ZZ.$$

Interestingly, \cite{caro} resolves Gizatullin's question in both cases: when is an automorphism of a smooth quartic surface $F\subset \PP^3$ the restriction of a Cremona transformation of $\PP^3$? In each case, the Sarkisov links discussed earlier yield generators of $\mathrm{Aut}(F)$. 

For example, when $C'$ satisfies $\omega_{C'}=\OO_{C'}(2)$, the Galois involution from Theorem \ref{thm3} (3) generates a $\ZZ/2\ZZ$ factor of the free product $Aut(F)=\ZZ/2\ZZ*\ZZ/2\ZZ*\ZZ/2\ZZ$. We thank Carolina Araujo and Daniela Paiva for pointing this out to us.
    \end{rmk}

\medskip
\section{Mori Program for $Bl_C \PP^3$ with $C$ in a cubic surface}\label{S4}

\noindent
This section determines the stable base locus decomposition and describes the birational models of $X=Bl_C\PP^3$, when $C$ is contained in a smooth cubic surface. In contrast to Section \ref{sec::3}, where the existence of Diagram \eqref{diagprop5.8} in Proposition \ref{SLink1} was known, here we must construct the analogous diagram. This construction is the technical core of the section.

\medskip \noindent
Recall that $D_2\subset\mathcal{H}_{10,11}$ denotes the family of curves in $\mathcal{H}_{10,11}$ that lie on a cubic surface. This family forms an irreducible divisor in $\mathcal{H}_{10,11}$. Throughout this section, we fix a general element $C\in D_2$ and set $X=Bl_C \PP^3$. 

\medskip\noindent
Let $S$ be the smooth cubic surface containing $C$. The incidences of its 27 lines with $C$ will be relevant, and are described below.

\begin{lem}\label{secantes} If $S$ is the smooth cubic surface containing $C$, then its 27 lines intersect $C$ as follows:
\begin{itemize}
    \item 2 skew lines $e_1,e_2$ of $S$ are 5-secant lines to $C$. 
    \item 10 lines of $S$ are 4-secant lines to $C$.
    \item 10 lines of $S$ are 3-secant lines to $C$.
    \item 5 lines of $S$ are 2-secant lines to $C$.
\end{itemize}
\end{lem}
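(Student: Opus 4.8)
The plan is to exploit the classical description $S\cong \mathrm{Bl}_6\PP^2$, compute the class of $C$ in $\PIC(S)$ from its degree and genus, and then pair that class against the $27$ lines, whose classes are explicitly known. First I would fix an isomorphism $S\cong \mathrm{Bl}_6\PP^2$ and write $\PIC(S)=\langle \ell,e_1,\dots,e_6\rangle$ with $\ell^2=1$, $e_i\cdot e_j=-\delta_{ij}$, $\ell\cdot e_i=0$, so that $H=-K_S=3\ell-\sum_i e_i$ and $H^2=3$. In this basis the $27$ lines fall into three types: the six $e_i$; the fifteen $\ell-e_i-e_j$ with $i<j$; and the six $2\ell-\sum_{m\neq i}e_m$. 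Writing $[C]=a\ell-\sum_i b_i e_i$, the intersection numbers against these three families are, respectively, $b_i$, $\ a-b_i-b_j$, and $\ 2a-\sum_m b_m+b_i$, so the whole problem reduces to determining $(a;b_1,\dots,b_6)$ and then bookkeeping.

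The class is pinned down by two numerical identities. Since $\deg C=C\cdot H=10$ we get $3a-\sum_i b_i=10$, while adjunction on $S$ gives $2g-2=C^2+C\cdot K_S=C^2-10$, hence $C^2=a^2-\sum_i b_i^2=30$. A Cauchy--Schwarz bound applied to these two equations confines $a$ to the range $8\le a\le 12$, and after acting by the Weyl group $W(E_6)$ (equivalently, choosing the blow-down model so that the multiplicities $b_i$ are as balanced as possible, which forces $a=8$) the effective class of a smooth irreducible genus-$11$ curve becomes
$$[C]=8\ell-3e_1-3e_2-2e_3-2e_4-2e_5-2e_6.$$
That this class is effective with smooth irreducible general member of genus $11$ follows from Riemann--Roch on $S$, giving $\dim|C|=20$, together with base-point-freeness and Bertini. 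Crucially, the multiset $\{C\cdot L\}_{L}$ over the $27$ lines depends only on the $W(E_6)$-orbit of $[C]$, since $W(E_6)$ permutes the lines; hence it is legitimate to compute in this single balanced model.

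With $(a;b)=(8;3,3,2,2,2,2)$ and $\sum_m b_m=14$ I would now read off the three families. The six $e_i$ give $3,3,2,2,2,2$. The fifteen $\ell-e_i-e_j$ give $8-b_i-b_j$, namely $2$ once (the pair of $3$'s), $3$ eight times (a $3$ and a $2$), and $4$ six times (two $2$'s). The six $2\ell-\sum_{m\neq i}e_m$ give $2+b_i$, namely $5$ twice (the two $3$'s) and $4$ four times (the four $2$'s). Tallying, the value $5$ occurs twice, $4$ occurs $6+4=10$ times, $3$ occurs $2+8=10$ times, and $2$ occurs $4+1=5$ times. For a general $C$ the intersections are transverse, so these intersection numbers are exactly the secancy orders, which is the claim. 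As a consistency check one can verify $\sum_L C\cdot L=9\,H\cdot C=90$ and $\sum_L (C\cdot L)^2=-6\,C^2+5(K_S\cdot C)^2=320$, both of which match the tally.

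Finally I would confirm that the two $5$-secants are skew: in this model they are $2\ell-\sum_{m\neq1}e_m$ and $2\ell-\sum_{m\neq2}e_m$, whose intersection number on $S$ is $4-4=0$; since distinct lines on a smooth cubic meet in $0$ or $1$ points, they are disjoint. I expect the only genuinely delicate point to be the determination of $[C]$ in the second step --- specifically, justifying that the general member of the irreducible divisor $D_2$ (Proposition \ref{toolvite}) realizes exactly this $W(E_6)$-orbit, rather than another class of the same degree and genus, and that the orbit is represented by a smooth irreducible curve. Once the class is fixed, everything else is a finite verification with the $27$ lines.
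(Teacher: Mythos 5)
Your proposal is correct and takes essentially the same route as the paper: the paper's proof also determines $[C]\in\PIC(S)$ from the degree and genus conditions and reads off the incidences with the explicit classes of the $27$ lines, working with the Weyl-equivalent representative $(12;5,5,4,4,4,4)$ (two quadratic Cremona moves away from your balanced $(8;3,3,2,2,2,2)$), which yields the identical tally $2,10,10,5$. The uniqueness of the $W(E_6)$-orbit, which you rightly flag as the one delicate point, is asserted in the paper at the same level of detail (``up to automorphisms of the cubic surface, the only solution is...''), so your argument is no less complete than the original.
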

\begin{proof}
Given 6 points $p_1,p_2,\ldots,p_6$ on the plane, one can easily compute the values $(d, a_1,a_2,\ldots,a_6)$ for which a degree $d$ plane curve having multiplicity $a_i$ at $p_i$ becomes a degree 10 and genus 11 curve after embedding the blow-up $\PP^2_{p_1,\ldots,p_6}$ as a cubic surface inside $\PP^3$. This computation shows that, up to automorphisms of the cubic surface, the only solution is $(12,5,5,4,4,4,4)$. This means that a curve $C$ of degree 10 and genus 11 in a smooth cubic surface $S$ has class
$$[C]=12\ell-5e_1-5e_2-4\sum_{i=3}^6e_i$$
where $\ell$ is the pullback of $\OO_{\PP^2}(1)$ and the $e_i$'s denote the classes of the exceptional divisors. Denoting the lines on $S$ as $e_i, \ell_{ij}$ and $c_i$, where
\begin{itemize}
    \item $e_i$ is the exceptional divisor over $p_i$,
    \item $\ell_{ij}$ is the strict transform of the line passing through $p_i$ and $p_j$, and
    \item $c_i$ is the strict transform of the conic passing through each $p_j$ except for $p_i$,
\end{itemize}
then the 27 lines of $S$ have the following incidences with $C$:
\begin{itemize}
    \item $e_1$ and $e_2$ are 5-secant to $C$.
    \item $e_3,\ldots,e_6$, as well as $\ell_{ij}$ for $i,j\geq 3$, are 4-secant to $C$.
    \item $\ell_{ij}$ for $i\leq 2$ and $j\geq 3$, as well as $c_i$ for $i\leq 2$, are 3-secant to $C$.
    \item $\ell_{12}$ as well as $c_i$ for $i\geq3$ are 2-secant to $C$.
\end{itemize}
\end{proof}

\medskip\noindent
The following analysis of the normal bundles of these secant lines in $X$ will be used in proving Theorem \ref{teorema2} (3).

\begin{lem}\label{lem::normalBundleLines}
    Let $\ell\subset X$ be the strict transform of a line in $\PP^3$. If $\ell$ is a
    \begin{enumerate}[a)]
       \item 3-secant line to $C$, then $N_{\ell/X}\cong\OO_{\PP^1}(-1)\oplus\OO_{\PP^1}$;
       \item 4-secant line to $C$, then $N_{\ell/X}\cong \OO_{\PP^1}(-1)^2$;
       \item 5-secant line to $C$, and $C$ is contained in a smooth cubic surface, then $N_{\ell/X}\cong\OO_{\PP^1}(-1)\oplus\OO_{\PP^1}(-2).$
    \end{enumerate}
\end{lem}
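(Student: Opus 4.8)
The plan is to realize $\ell$ inside a smooth surface on which its self-intersection is transparent, and then to read off the splitting type of $N_{\ell/X}$ from a normal-bundle sequence whose extension class is forced to vanish.

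First I would record the elementary intersection numbers. Since $\ell$ is the strict transform of a line meeting $C$ transversally, $\pi|_\ell$ is an isomorphism onto $\PP^1$, and $H\cdot\ell=1$, $E\cdot\ell=k$, where $k\in\{3,4,5\}$ is the secant number. In particular $-K_X\cdot\ell=(4H-E)\cdot\ell=4-k$, so adjunction gives $\deg N_{\ell/X}=-K_X\cdot\ell-2=2-k$; this already pins the degree to $-1,-2,-3$ in the three cases, and since every rank-two bundle on $\PP^1$ splits, it remains only to identify the two summands. Observe that in all three cases the asserted answer is the \emph{balanced} splitting of the given degree.

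The key step uses the smooth cubic surface $S\supset C$ fixed throughout this section, with $\ell$ among its $27$ lines by Lemma \ref{secantes}. Let $\widetilde S\subset X$ denote its strict transform, of class $3H-E$. Because $C$ is a Cartier divisor on the smooth surface $S$, the induced map $\widetilde S\to S$ is the blow-up of $S$ along $C$, hence an isomorphism; thus $\widetilde S\cong S$ is smooth and carries $\ell$ as one of the $(-1)$-lines of the cubic surface, so that $N_{\ell/\widetilde S}\cong\OO_{\PP^1}(-1)$. On the other hand $N_{\widetilde S/X}|_\ell=\OO_X(3H-E)|_\ell=\OO_{\PP^1}(3-k)$. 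Feeding these into the normal-bundle sequence
$$0\to N_{\ell/\widetilde S}\to N_{\ell/X}\to N_{\widetilde S/X}|_\ell\to 0$$
exhibits $N_{\ell/X}$ as an extension of $\OO_{\PP^1}(3-k)$ by $\OO_{\PP^1}(-1)$.

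Finally I would observe that this extension necessarily splits: its class lies in $\mathrm{Ext}^1(\OO_{\PP^1}(3-k),\OO_{\PP^1}(-1))\cong H^1(\PP^1,\OO_{\PP^1}(k-4))$, which vanishes for every $k\le 5$. Hence $N_{\ell/X}\cong\OO_{\PP^1}(-1)\oplus\OO_{\PP^1}(3-k)$, reading as $\OO_{\PP^1}(-1)\oplus\OO_{\PP^1}$, $\OO_{\PP^1}(-1)^2$, and $\OO_{\PP^1}(-1)\oplus\OO_{\PP^1}(-2)$ in cases (a), (b), (c) respectively. The crux of the argument is the identification $\widetilde S\cong S$ together with the value $\ell^2_{\widetilde S}=-1$: it is precisely this $-1$ (rather than the $-2$ one would obtain from a smooth quartic K3 surface through $C$, for which the corresponding Ext groups do \emph{not} all vanish) that forces the balanced splitting in every case. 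For the assertions in (a) and (b) stated without assuming $C$ lies on a cubic, I would instead identify $h^0(N_{\ell/X})$ with the tangent space to the scheme of $k$-secant lines to $C$, which is finite for $k=4$ by Cayley's formula and one-dimensional for $k=3$; the resulting bound on $h^0$ again forces the balanced type.
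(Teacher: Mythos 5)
Your proof is correct, and the underlying technique is the same as the paper's: the paper also computes $N_{\ell/X}$ from the sequence $0\to N_{\ell/F}\to N_{\ell/X}\to N_{F/X}|_\ell\to 0$ for a smooth surface $F\supset\ell$, splitting it by an $\mathrm{Ext}^1$ vanishing (phrased there as the criterion $(2F+K_X)\cdot\ell<0$). The genuine difference is the choice of $F$: you use the strict transform of the cubic $S$ (class $3H-E$) uniformly, whereas the paper takes $F\in|3H-E|$ only in case (a) and $F\in|11H-3E|$ in cases (b) and (c). Your choice is in fact the one that works in case (c). For $C\in D_2$ the conics $C'\subset S$ residual to a 2-secant line satisfy $(11H-3E)\cdot C'=-2$, so every member of $|11H-3E|$ contains $S$; writing a member as $S\cup G$ with $G\in|8H-2E|$, the $5$-secant satisfies $(8H-2E)\cdot\ell=-2$, hence $\ell\subset S\cap G$ and every member is singular along $\ell$, so no admissible smooth $F$ exists in that class. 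Moreover, applying the paper's own formulas formally with $F=11H-3E$ and $k=5$ gives $N_{\ell/F}\cong\OO_{\PP^1}(1)$ and $N_{F/X}|_\ell\cong\OO_{\PP^1}(-4)$, i.e.\ the splitting $\OO_{\PP^1}(1)\oplus\OO_{\PP^1}(-4)$, not the asserted $\OO_{\PP^1}(-1)\oplus\OO_{\PP^1}(-2)$. So your uniform use of the cubic, with $N_{\ell/\widetilde S}\cong\OO_{\PP^1}(-1)$ and $N_{\widetilde S/X}|_\ell\cong\OO_{\PP^1}(3-k)$, quietly repairs a defect in the paper's written proof of (c), while agreeing with the paper verbatim in case (a).

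Two caveats. First, a small slip: $\mathrm{Ext}^1(\OO_{\PP^1}(3-k),\OO_{\PP^1}(-1))\cong H^1(\PP^1,\OO_{\PP^1}(k-4))$ vanishes precisely when $k-4\ge-1$, i.e.\ for $k\ge3$, not ``for every $k\le5$'' (it is nonzero for $k=2$); this is harmless since only $k\in\{3,4,5\}$ occur. Second, your closing remark about proving (a) and (b) for secant lines \emph{not} lying on $S$ is incomplete as stated: finiteness (resp.\ one-dimensionality) of the scheme of $k$-secants bounds its dimension at $[\ell]$, not the tangent space $h^0(N_{\ell/X})$, so you would additionally need the secant scheme to be reduced (resp.\ smooth) at $[\ell]$ to force the balanced splitting. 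Note, however, that the paper's proof has the same restriction: $S$ is the unique member of $|3H-E|$, and every member of $|11H-3E|$ contains $\ell$ only when $\ell\subset S$, so the paper's argument likewise only covers secant lines lying on the cubic, which are the ones used later in Section \ref{S4}.
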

\begin{proof}
    Let $F\subset X$ be any smooth surface containing $\ell$. Consider the exact sequence
    \begin{equation}\label{equation:normalBundlesSequence}
        0\to N_{\ell/F}\to N_{\ell/X}\to N_{F/X}|_\ell\to0
    \end{equation}
    where
    
    $$    N_{\ell/F}\cong\OO_{\PP^1}(-2-(K_X+F)\cdot\ell),\quad \mbox{and}\quad N_{F/X}|_\ell\cong\OO_{\PP^1}(F\cdot\ell).$$

   The sequence (\ref{equation:normalBundlesSequence}) splits if
    $$(2F+K_X)\cdot\ell<0.$$
    This number is negative in cases $b)$ and $c)$ for $F\in|11H-3E|$; and in case $a)$ for $F\in|3H-E|$; and in each case the splitting of $N_{\ell/X}$ is as claimed. \\
\end{proof}

\medskip\noindent
Now let us consider the case c) of the previous proposition and further compute another normal bundle.
\begin{lem}\label{lem::normalBundleDirectrix}
    Assume that $C$ is contained in a smooth cubic and let $\ell$ be a 5-secant line to $C$. Let $X_1=Bl_\ell(X)$ stand for the blow-up of $\ell$ and let $E_\ell\subset X_1$ be the exceptional divisor. If $y$ denotes the $(-1)$- curve (the directrix) curve inside $E_\ell\cong\mathbb{F}_1$, then $N_{y/X_1}\cong\OO_{\PP^1}(-1)^2.$
\end{lem}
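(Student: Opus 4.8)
===BEGIN PLAN===

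The plan is to compute the normal bundle $N_{y/X_1}$ by comparing it, via an exact sequence, with the two pieces of geometric data we already control: the normal bundle of $y$ inside the exceptional divisor $E_\ell \cong \mathbb{F}_1$, and the restriction to $y$ of the normal bundle $N_{E_\ell/X_1}$. Since $y \subset E_\ell \subset X_1$, the standard short exact sequence of normal bundles reads
\begin{equation*}
0 \to N_{y/E_\ell} \to N_{y/X_1} \to N_{E_\ell/X_1}\big|_y \to 0.
\end{equation*}
The first step is to identify the two outer terms. On the Hirzebruch surface $E_\ell \cong \mathbb{F}_1$ the directrix $y$ is the unique section with $y^2 = -1$, so $N_{y/E_\ell} \cong \OO_{\PP^1}(-1)$. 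For the quotient term, I would use that $E_\ell = \PP(N_{\ell/X}^\vee)$ (or its dual convention) and that $N_{E_\ell/X_1}\big|_{\text{fiber}} \cong \OO(-1)$ tautologically; restricting this relative $\OO(-1)$ along the directrix $y$ and using $N_{\ell/X} \cong \OO_{\PP^1}(-1)\oplus\OO_{\PP^1}(-2)$ from Lemma \ref{lem::normalBundleLines}(c) pins down the degree.

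The key computation is the degree of $N_{E_\ell/X_1}\big|_y$. Here I would exploit the projectivization structure: with $N_{\ell/X} \cong \OO(-1)\oplus\OO(-2)$, the surface $\mathbb{F}_1 = \PP(\OO(-1)\oplus\OO(-2))$ has its directrix $y$ corresponding to the quotient of larger degree, and the restriction of the relative tautological bundle $\OO_{E_\ell}(-1) = N_{E_\ell/X_1}$ to $y$ has degree determined by the splitting type. Concretely, $\deg\big(N_{E_\ell/X_1}\big|_y\big) = -1$, so $N_{E_\ell/X_1}\big|_y \cong \OO_{\PP^1}(-1)$. Granting this, the sequence becomes an extension of $\OO_{\PP^1}(-1)$ by $\OO_{\PP^1}(-1)$, and since $\mathrm{Ext}^1(\OO_{\PP^1}(-1),\OO_{\PP^1}(-1)) = H^1(\PP^1,\OO) = 0$, it splits, yielding $N_{y/X_1} \cong \OO_{\PP^1}(-1)^2$ as claimed.

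I expect the main obstacle to be getting the degree of $N_{E_\ell/X_1}\big|_y$ exactly right, since it depends on the precise convention (Grothendieck vs. dual) for $\PP(N_{\ell/X})$ and on correctly matching the directrix $y$ of $\mathbb{F}_1$ to the correct summand of the split bundle $\OO(-1)\oplus\OO(-2)$. To minimize the risk of a sign or convention error, I would cross-check the answer intrinsically: the triple $X_1 \to X$, the surface $E_\ell$, and the curve $y$ all carry computable intersection numbers, and one can recover $\deg N_{y/X_1} = -2$ independently via $\deg N_{y/X_1} = -K_{X_1}\cdot y - 2 + 2p_a(y)$ together with $p_a(y)=0$ and a direct computation of $-K_{X_1}\cdot y$ using $K_{X_1} = \phi^*K_X + E_\ell$ (where $\phi: X_1 \to X$ is the blow-up). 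Matching this total degree $-2$ with the splitting forced by the vanishing $\mathrm{Ext}^1 = 0$ confirms $\OO_{\PP^1}(-1)^2$ is the only possibility among bundles of the right total degree that arises as a (necessarily split) extension of line bundles of degree $-1$.

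===END PLAN===
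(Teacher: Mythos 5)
Your proposal is correct and follows essentially the same route as the paper: the normal-bundle exact sequence for $y\subset E_\ell\subset X_1$, the identifications $N_{y/E_\ell}\cong\OO_{\PP^1}(-1)$ (from $y^2=-1$) and $N_{E_\ell/X_1}|_y=\OO_{E_\ell}(-1)|_y\cong\OO_{\PP^1}(-1)$ (the paper writes $\OO_{E_\ell}(-1)=\OO_{E_\ell}(-2f-y)$ and restricts to $y$), followed by the vanishing of $\mathrm{Ext}^1(\OO_{\PP^1}(-1),\OO_{\PP^1}(-1))=H^1(\PP^1,\OO_{\PP^1})$ to split the extension. One caution on your cross-check: $E_\ell\cdot y$ is exactly the degree in question, so to avoid circularity compute $E_\ell|_{E_\ell}$ independently from the fiber degree $E_\ell|_{E_\ell}\cdot f=-1$ and $E_\ell^3=-\deg N_{\ell/X}=3$, which force $E_\ell|_{E_\ell}=-y-2f$ without having to match the directrix to a summand of $\OO_{\PP^1}(-1)\oplus\OO_{\PP^1}(-2)$.
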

\begin{proof}
    Similarly to (\ref{equation:normalBundlesSequence}), consider the short exact sequence on normal bundles corresponding to $y\subset E_\ell\subset X_1$. The normal bundle $N_{y/E_\ell}$ is isomorphic to $\OO_y(y)\cong\OO_{\PP^1}(-1)$ since $y^2=-1$ inside $E_\ell$. 
    
    Note that $N_{E_\ell/X_1}|_y$ is just $\OO_y(E_\ell)$. Furthermore, $E_\ell$ is a projective bundle over $\ell$. We get that
    $$\OO_{E_\ell}(E_\ell)=\OO_{E_\ell}(-1)=\OO_{E_\ell}(-2f-y)$$
    where $f$ denotes the fiber of the projection $E_\ell\to\ell$. Since $f\cdot y=1$ inside $E_\ell$, we see that this line bundle restricted to $y$ is isomorphic to $\OO_{\PP^1}(-2f\cdot y - y^2)=\OO_{\PP^1}(-1)$. 
    
    All this proves that $N_{y/X_1}$ is an extension of two copies of $\OO_{\PP^1}(-1)$, which must be trivial as claimed.\\
\end{proof}

\medskip\noindent
Let us now start computing the SBLD of $X$. Note that, contrary to the previous cases, the class $-K_X$ is not nef. We follow Notation \ref{notat}.

\begin{prop}\label{effectiveNefD3} The nef cone of $X$ is $\NEF(X)=[5H-E,H].$ The cone of effective divisors of $X$ is $\EFF(X)=[ 3H-E,E]$.
\end{prop}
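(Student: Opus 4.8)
The plan is to establish the two edges of each cone separately, using the extrinsic geometry of $C\subset S$ from Lemma \ref{secantes} to produce the explicit divisors that span the extremal rays, and then to argue that no effective (resp. nef) class can lie outside the stated cones. For the nef cone, I would first verify that $H$ is nef and base-point free, being the pullback of a very ample class. The interesting edge is $5H-E$: by Lemma \ref{lem::genInDegX}, for a general $C\in D_2$ the ideal $\mathcal{I}_C$ is generated in degree $5$, so $5H-E$ is base-point free and hence nef. To see it is extremal (i.e. that nothing beyond $5H-E$ toward $E$ is nef), I would exhibit a curve on which $5H-E$ has degree zero while classes $aH-bE$ with $b/a>1/5$ are negative: the natural candidate is a strict transform of one of the $5$-secant lines $e_1,e_2$ to $C$, since $(5H-E)\cdot\widetilde{e_i}=5-5=0$. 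Thus $5H-E$ is nef but not ample, and $\NEF(X)=[5H-E,H]$.

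\textbf{The effective cone.} For $\EFF(X)=[3H-E,E]$, the class $E$ is obviously effective and extremal (it is the exceptional divisor; any $aH+bE$ with $b>0$ is negative on a ruling of $E$, so the ray spanned by $E$ bounds the cone). The substantive edge is $3H-E$. Effectivity is immediate: since $C\in D_2$ lies on the smooth cubic surface $S$, the strict transform of $S$ is an effective divisor in the class $3H-E$, and by Lemma \ref{lem::genInDegX} the cubic through $C$ is unique, so $h^0(X,3H-E)=1$ and this class is rigid. The harder task is extremality — showing no effective class lies strictly below $3H-E$, i.e. that $aH-bE$ with $b/a>1/3$ is never effective. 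I would argue this by intersecting against a curve on which $3H-E$ vanishes. The rigid surface $S$ (class $3H-E$) should be swept out by a family of curves $\gamma$ with $(3H-E)\cdot\gamma=0$; concretely, inside the cubic surface $S\cong\mathbb{P}^2$ blown up at six points, the classes of lines on $S$ (the $27$ lines, or more usefully a pencil such as $\ell-e_i$ realized in $X$) meet $3H-E$ trivially while having positive $E$-degree, forcing any $aH-bE$ with $b/a>1/3$ to be negative on them, hence non-effective.

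\textbf{Main obstacle.} The principal difficulty is the extremality of $3H-E$, and in particular producing a complete covering family of curves $\gamma\subset S$ with $(3H-E)\cdot\gamma=0$ and $E\cdot\gamma>0$ whose classes genuinely certify that the effective cone does not extend past $3H-E$. Since $S$ is the \emph{unique} cubic through $C$, the surface $S$ is itself rigid and extremal, so the geometric content is that $S$ cannot be moved and that any divisor more negative along $E$ than $3H-E$ would have to contain $S$ with negative residual behavior. I would make this precise by restricting a hypothetical effective $D=aH-bE$ (with $b>3a$, suitably normalized) to $S$ and computing $D|_S$ against the classes of lines inside $S$, using the incidence data of Lemma \ref{secantes} to show the restriction has negative degree on some line of $S$ and therefore $D|_S$ is not effective; combined with the uniqueness $h^0(3H-E)=1$, which controls how many times $S$ can split off, this yields a contradiction. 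Once both edges of both cones are pinned down, the proposition follows.
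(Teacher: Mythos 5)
Your nef-cone argument is correct and is exactly the paper's: $H$ is pulled back from $\PP^3$, $|5H-E|$ is base-point free by Lemma \ref{lem::genInDegX}, and the two $5$-secant lines of Lemma \ref{secantes} witness that $5H-E$ is nef but not ample. The genuine gap is in the extremality of $3H-E$, where your load-bearing claim is numerically false. With $[C]=12\ell-5e_1-5e_2-4(e_3+\cdots+e_6)$ in $\PIC(S)$ (this is computed in the proof of Lemma \ref{secantes}), a conic $\gamma\in|\ell-e_i|$ has $H\cdot\gamma=2$ and $E\cdot\gamma=\gamma\cdot[C]=8$ for $i\geq3$ (and $7$ for $i=1,2$), so $(3H-E)\cdot\gamma=-2$ (resp.\ $-1$), not $0$; and among the $27$ lines only the ten $3$-secants satisfy $(3H-E)\cdot\gamma=0$, and those are finitely many lines that do not sweep out $S$. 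This is not a repairable sign slip within your framework: since the curves that actually move in $S$ meet $3H-E$ \emph{negatively}, your inference ``negative on them, hence non-effective'' proves too much --- applied to the class $3H-E$ itself it would show the cubic $S$ is not effective. Negativity against a curve moving only inside $S$ shows merely that $S$ is a component of the divisor, nothing more.

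Your fallback sketch in the last paragraph has the right shape but two faulty steps. First, negative degree on \emph{some line} of $S$ does not make $D|_S$ non-effective: the lines are rigid $(-1)$-curves, and effective classes are routinely negative on rigid curves they contain; you need negativity against a nef (moving) class on $S$. Second, $h^0(X,3H-E)=1$ does not control how many times $S$ can split off, since $m(3H-E)$ is effective for every $m$. The correct bookkeeping --- and the paper's actual proof --- is the criterion of \cite[Lemma 4.1]{CoskunChen}: exhibit a curve moving in the irreducible divisor $S$ with $S\cdot C'<0$. The paper takes $C'$ to be the conic residual to a $2$-secant line of $C$ in a general hyperplane section of $S$; it meets $C$ in $8$ points, so $(3H-E)\cdot C'=6-8=-2<0$, and extremality and rigidity of $S$ follow at once. (These residual conics are, up to class, exactly your pencils $\ell-e_i$ with $i\geq3$: you had the right curves and the wrong intersection number.) If you want to avoid the citation, the splitting-off argument does terminate, but not by rigidity of $S$: if $D=aH-bE$ is effective with $b/a>1/3$, then $D\cdot C'=2a-8b<0$ forces $D-(3H-E)$ effective, and the condition $b/a>1/3$ is preserved under this subtraction while the $H$-degree drops by $3$; after finitely many steps one obtains an effective class with negative degree against the strict transform of a general line of $\PP^3$, which moves through every point of $X$ --- a contradiction.
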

\begin{proof}
Let us elaborate on the nef cone. By Lemma \ref{lem::genInDegX}, $C$ is cut out, as a scheme, by quintic surfaces. This implies that the linear system  $|5H-E|$ is base-point free and the morphism induced by it contracts (the strict transforms of) the 5-secant lines $e_1$ and $e_2$. 
Thus the class $5H-E$ is nef but not ample. The class $H$ is also nef but not ample as it is the pullback of the hyperplane class in $\PP^3$.

Let us now elaborate on the effective cone. The class $E$ is clearly effective and extremal as it is contracted by the blow-up morphism. On the other hand, since $X=Bl_C \PP^3$ and $C$ is contained in a unique cubic surface $S$, then the class $3H-E$ is effective. Let us prove that it is extremal. Consider a 2-secant line $\ell\subset S$ of $C$. The general hyperplane $H_\ell$ containing $\ell$ satisfies that $H_\ell\cap S=\ell\cup C'$, where $C'$ is a conic intersecting $C$ at 8 points. Thus, $(3H-E)\cdot C'=-2.$
Since $C'\subset S$ is a moving curve in $S$, then the extremality of the $3H-E$ follows from the following Lemma. \\
\end{proof}

\begin{lem}\cite[Lem. 4.1]{CoskunChen}
Suppose that $C'$ is a moving curve in an irreducible effective divisor $S$ satisfying $S \cdot C' < 0$. Then $S$ is extremal in the cone of effective divisors and rigid. 
\end{lem}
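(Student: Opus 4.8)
The plan is to establish rigidity first and then deduce extremality from it, using a single device: intersecting divisors against the moving curve $C'$. Throughout I work with numerical classes in $N^1(X)$ and rely on two elementary facts: an effective divisor that meets a curve \emph{not} contained in its support has non-negative intersection with it; and an effective divisor that is numerically trivial must vanish, since its degree against $H^{n-1}$ for an ample $H$ is zero, forcing all its coefficients to be zero.

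For rigidity I would take an arbitrary effective divisor $D$ with $D\equiv mS$ for some integer $m\geq 1$ and show $D=mS$ as divisors. Write $D=kS+D'$, where $k=\mathrm{ord}_S(D)\geq 0$ and $D'$ is effective with $S\not\subseteq\mathrm{Supp}(D')$, so that $D'\equiv (m-k)S$. Because $S$ is irreducible and $S\not\subseteq\mathrm{Supp}(D')$, the locus $\mathrm{Supp}(D')\cap S$ is a proper closed subset of $S$; since the deformations of $C'$ sweep out a dense subset of $S$, a general member is not contained in $\mathrm{Supp}(D')$, whence $D'\cdot C'\geq 0$. On the other hand $D'\cdot C'=(m-k)(S\cdot C')$, and $S\cdot C'<0$ forces $m-k\leq 0$. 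Then $(k-m)S+D'\equiv 0$ is a sum of two effective classes, so it vanishes; therefore $k=m$ and $D'=0$, i.e. $D=mS$. In particular $h^0(X,mS)=1$, so $S$ is rigid.

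Extremality follows formally. Suppose $m[S]=[D_1]+[D_2]$ with $D_1,D_2$ effective; I want to conclude that $[D_1]$ and $[D_2]$ lie on the ray $\RR_{\geq 0}[S]$. Since $D_1+D_2\equiv mS$ is effective, rigidity gives $D_1+D_2=mS$ as divisors, and the irreducibility of $S$ forces $D_1=aS$ and $D_2=(m-a)S$ for some integer $0\leq a\leq m$. Thus both summands are non-negative multiples of $S$, which is exactly the statement that $\RR_{\geq 0}[S]$ is an extremal ray of $\EFF(X)$; a standard density argument upgrades this from integral to real classes since the relevant cone is closed.

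I expect the only genuine subtlety to be the step asserting that a general member of the family of $C'$ is not contained in $\mathrm{Supp}(D')$: this is precisely where the hypothesis that $C'$ \emph{moves} is indispensable, as one must know that the family dominates $S$ so that its general member avoids the fixed proper closed subset $\mathrm{Supp}(D')\cap S$. Everything else is bookkeeping with the two elementary facts above, and the negativity $S\cdot C'<0$ enters only to convert the inequality $D'\cdot C'\geq 0$ into the bound $k\geq m$.
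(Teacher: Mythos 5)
The paper offers no proof of this lemma at all: it is imported wholesale from \cite[Lemma 4.1]{CoskunChen}, so the relevant comparison is with the standard argument in that reference, which is essentially what you reproduce. Your three ingredients --- the decomposition $D=kS+D'$ with $S\not\subseteq\mathrm{Supp}(D')$, the use of the dominating family of deformations of $C'$ to force $D'\cdot C'\geq 0$ (correctly isolating, as you note, that the general member avoids the proper closed subset $\mathrm{Supp}(D')\cap S$ of the irreducible $S$ precisely because the family dominates $S$), and the fact that a nonzero effective class pairs positively with $H^{n-1}$ for $H$ ample --- are exactly the ingredients of the cited proof. Both the rigidity step and the deduction of extremality for integral decompositions are correct; note also that rigidity in the sense $h^0(X,mS)=1$ follows since any member of $|mS|$ is numerically equivalent to $mS$ and your argument shows $mS$ is the unique effective representative of its numerical class.

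The one inaccurate sentence is the final claim that ``a standard density argument upgrades this from integral to real classes since the relevant cone is closed.'' The cone $\mathrm{Eff}(X)$ is not closed in general (this is why one distinguishes it from the pseudoeffective cone), and extremality of a ray is not a property that passes to limits, so density does not by itself yield the real-coefficient statement. Fortunately, no upgrade is needed: an arbitrary class of $\mathrm{Eff}(X)$ is a \emph{finite} sum $\sum a_i[A_i]$ with $a_i>0$ real and $A_i$ irreducible effective, and your intersection argument applies verbatim to such sums. If $[S]=\alpha+\beta$ with $\alpha,\beta\in\mathrm{Eff}(X)$, then pairing with $C'$ shows some irreducible component $A_i$ occurring in $\alpha+\beta$ has $A_i\cdot C'<0$; the general deformation of $C'$ must then lie in $A_i$, so the dense locus swept out in $S$ is contained in the closed set $A_i$, giving $S\subseteq A_i$ and hence $A_i=S$. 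Writing $[S]=c[S]+\gamma$ with $c>0$ and $\gamma$ a nonnegative real combination of irreducible divisors other than $S$ (each pairing nonnegatively with $C'$ by the same reasoning), pairing with $C'$ gives $c\geq 1$, and pairing the relation $(c-1)[S]+\gamma\equiv 0$ with $H^{n-1}$ forces $c=1$ and $\gamma=0$, so $\alpha$ and $\beta$ lie on $\mathbb{R}_{\geq 0}[S]$. With that cosmetic repair, your proof is complete and coincides with the one the paper cites.
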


\begin{figure}[h]
\centering
\includegraphics[scale=.6]{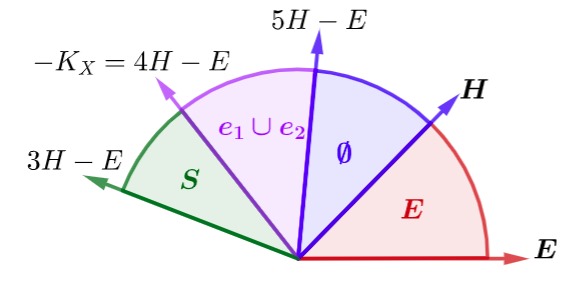}
\caption{ SLBD of $X=Bl_C\ \PP^3$ for $C$ contained in a cubic.}
\end{figure}

\medskip\noindent
Let us describe the stable base locus decomposition of $X$ using the notation in Proposition \ref{SBLDgeneric}.

\begin{prop}\label{SBLDcubic}
    The stable base locus decomposition of the effective cone
of $X$ consists of the following $4$ chambers:
    \begin{enumerate}[(1)]
        \item In the region $(H,E]$, the stable base locus equals the support of $E$.
        \item In the closed cone $[5H-E, H]$, the stable base locus is empty. 
        \item In the region $[4H-E, 5H-E)$, the stable base locus is the union of 5-secant lines $e_1\cup e_2$.
        
        \item In the region $[3H-E, 4H-E)$, the stable base locus equals the cubic surface $S$.    
    \end{enumerate}
\end{prop}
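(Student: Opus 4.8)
The plan is to establish the four chambers by exhibiting the stable base locus in each region, using the cone description already in hand ($\NEF(X)=[5H-E,H]$ and $\EFF(X)=[3H-E,E]$ from Proposition \ref{effectiveNefD3}) together with the normal bundle data of the secant lines (Lemmas \ref{lem::normalBundleLines} and \ref{lem::normalBundleDirectrix}). The general strategy mirrors the proof of Proposition \ref{SBLDgeneric}: for each chamber I produce divisor classes whose base locus I can pin down, and then use the fact that the stable base locus is constant across a chamber and can only shrink as one moves toward the interior of the nef cone.

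First I would treat the two extreme chambers. For region (1), $(H,E]$, the argument is identical to Proposition \ref{SBLDgeneric}(1): $H$ is base-point free, so any $aH+bE$ with $a\geq 0$, $b>0$ has base locus in $E$, and testing against a ruling $\ell\subset E$ gives $(aH+bE)\cdot\ell<0$ whenever $b>0$, forcing $E$ into the stable base locus. For region (2), $[5H-E,H]$, emptiness is exactly Proposition \ref{effectiveNefD3}, since every class here is nef and base-point free (recall $|5H-E|$ and $|H|$ are both base-point free, and the cone is spanned by these). The content of the proposition is therefore chambers (3) and (4).

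For chamber (3), $[4H-E,5H-E)$, I would argue that the stable base locus is exactly the two $5$-secant lines $e_1\cup e_2$. Since the boundary class $5H-E$ contracts precisely $e_1,e_2$ (Proposition \ref{effectiveNefD3}), these lines lie in $\mathrm{Bs}(D)$ for any $D$ strictly inside this chamber, as can be verified by intersecting: a class $a(4H-E)+b(5H-E)$ with $b>0$, meets each $5$-secant $e_i$ negatively (using that $e_i$ is contracted by $5H-E$ but not by $4H-E$). To show nothing more appears, I would use that $4H-E$ is nef — it is the boundary of chamber (2) — so its stable base locus is empty, and the base locus of $D\in[4H-E,5H-E)$ is contained in the locus contracted by $5H-E$, which is $e_1\cup e_2$. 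A short intersection computation, parallel to Lemma \ref{lem::excLocus}, rules out any residual curve or higher-dimensional component.

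For chamber (4), $[3H-E,4H-E)$, the claim is that the stable base locus is the whole cubic surface $S$. The extremal class $3H-E$ is rigid with support $S$ (Proposition \ref{effectiveNefD3}), so $S\subset\mathrm{Bs}(D)$ for every $D$ in this chamber, exactly as in Proposition \ref{SBLDgeneric}(4). I expect the main obstacle to lie here: I must check that the base locus is \emph{all} of $S$ and not a proper subvariety, and that no extra components (such as the $4$-secant lines, which governed the analogous chamber in the ACM case) appear or, conversely, get absorbed into $S$. Because $C\subset S$, the $4$-secant lines of $C$ all lie on $S$, so they are subsumed into the surface rather than forming a separate stratum — this is precisely the qualitative difference from Proposition \ref{SBLDgeneric}, and verifying it cleanly is the crux. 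I would confirm it by writing a general $D\in[3H-E,4H-E)$ as a positive combination $a(3H-E)+b(4H-E)$ and showing that every section vanishes on $S$ (since the $3H-E$ factor forces this), while $4H-E$ being base-point free away from $S$ guarantees no further base locus, so that $\mathrm{Bs}(D)=S$ exactly.
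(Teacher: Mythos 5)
Your treatment of chambers (1) and (2) matches the paper, but chamber (3) rests on a false premise. You assert that $4H-E$ is nef, that it is the boundary of chamber (2), and that its stable base locus is empty. That is the ACM picture of Section \ref{sec::3}; in the present case $\NEF(X)=[5H-E,H]$ (Proposition \ref{effectiveNefD3}), and $(4H-E)\cdot e_i=4-5=-1<0$ against the two $5$-secant lines, so $-K_X=4H-E$ is precisely \emph{not} nef --- the section emphasizes this point. Your claim also contradicts the very statement being proved (chamber (3) contains $4H-E$ and has stable base locus $e_1\cup e_2\neq\emptyset$) and your own previous sentence placing $e_1\cup e_2$ in $Bs(D)$. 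What is genuinely needed is an upper bound on $Bs(4H-E)$, and the paper obtains it from a concrete geometric input you are missing: a smooth quartic surface $F$ containing $C$ meets the cubic $S$ in $S\cap F=C\cup e_1\cup e_2$, so that (since $5H-E$ is base-point free, giving $Bs(D)\subseteq Bs(4H-E)$ for $D$ in the chamber, and $S$ plus a moving plane is a member of $|4H-E|$) one gets $Bs(4H-E)=e_1\cup e_2$ exactly. Your fallback, that $Bs(D)$ is contained in the locus contracted by $5H-E$, is also unjustified here: in Proposition \ref{SBLDgeneric}(3) the analogous containment came from the flop diagram \eqref{diagprop5.8}, but in the $D_2$ case the corresponding flip is only constructed \emph{after} this proposition, and indeed $|4H-E|$ defines merely a rational map, so no such diagram is available at this stage.

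In chamber (4), your lower bound ``every section vanishes on $S$ since the $3H-E$ factor forces this'' is not an argument: sections of $m\bigl(a(3H-E)+b(4H-E)\bigr)$ need not decompose through the rigid class. The paper's mechanism is a covering family of curves on which $D$ is negative: for a $2$-secant line $\ell\subset S$ of $C$ (Lemma \ref{secantes}), a general hyperplane $H_\ell\supset\ell$ cuts $S$ in $\ell\cup C'$ with $C'$ a conic meeting $C$ in $8$ points, whence $(4H-E)\cdot C'=0$ and $(3H-E)\cdot C'=-2$; so $D\cdot C'<0$ for every $D$ in the chamber, and since the conics $C'$ sweep out $S$, one concludes $S\subseteq Bs(D)$. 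Your observation that all $4$-secant lines of $C$ lie on $S$ (any line meeting $C$ in four points meets the cubic in four points and hence lies on it) is correct and does explain why no separate $4$-secant stratum appears, and once $Bs(4H-E)=e_1\cup e_2\subset S$ is established, the upper bound $Bs(D)\subseteq S$ follows as you indicate; but without the nefness error repaired and the covering-conic computation supplied, chambers (3) and (4) --- the actual content of the proposition --- are not proved.
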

\begin{proof}\ 
    \begin{enumerate}[(1)]
    \item Same argument as Proposition \ref{SBLDgeneric}.
    
    \item This follows from Proposition \ref{effectiveNefD3}.

    \item Since the class $5H-E$ is base-point free, any divisor class $D\in[4H-E, 5H-E)$ satisfies $Bs(D)\subset Bs(4H-E)$. Moreover $D\cdot e_i\leq  0$ for $i=1,2$; thus $e_1\cup e_2\subset Bs(D)$. On the other hand, a smooth quartic $F$ containing $C$ satisfies $S\cap F=C\cup e_1\cup e_2$. Hence, $Bs(4H-E)=e_1\cup e_2$. This proves that $4H-E$ and $5H-E$ span a chamber with stable base locus equal to $e_1\cup e_2$.
    
    \item Following Lemma \ref{secantes}, consider a 2-secant line $\ell\subset S$ to $C$. A general hyperplane $H_\ell$ which contains $\ell$ satisfies that $H_\ell\cap S=\ell\cup C'$, where $C'$ is a conic intersecting $C$ in 8 points. Thus
    $$(4H-E)\cdot C'=0.$$
    Since $S$ is covered by the conics $C'$, this proves that $3H-E$ and $4H-E$ span a chamber with stable base locus equal to $S$.
    \end{enumerate}
\end{proof}

\begin{cor}
    The movable cone of $X$ is  $\MOV(X)=[ -K_X,H]$.
\end{cor}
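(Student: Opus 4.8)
The movable cone of $X$ is $\MOV(X)=[-K_X,H]$.

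The plan is to extract the movable cone from the stable base locus decomposition already computed in Proposition \ref{SBLDcubic}. Recall that the movable cone consists of those divisor classes whose stable base locus has codimension at least two, equivalently, those classes whose SBLD chamber contains no divisorial component in its stable base locus. So the strategy is simply to inspect each of the four chambers listed in Proposition \ref{SBLDcubic} and discard those whose stable base locus contains a divisor.

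First I would go through the chambers one by one. In chamber (1), the region $(H,E]$, the stable base locus is the support of the exceptional divisor $E$, which is a divisor; hence these classes are \emph{not} movable. In chamber (4), the region $[3H-E,4H-E)$, the stable base locus is the cubic surface $S$, again a divisor; so these classes are not movable either. By contrast, in chamber (2), the closed cone $[5H-E,H]$, the stable base locus is empty, so every class here is movable (indeed nef by Proposition \ref{effectiveNefD3}). In chamber (3), the region $[4H-E,5H-E)$, the stable base locus is the union of the two $5$-secant lines $e_1\cup e_2$, which has dimension one, hence codimension two in the threefold $X$; these classes are therefore movable as well.

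Putting this together, the movable classes are exactly those lying in chambers (2) and (3), whose union is the cone spanned by the rays through $4H-E$ and $H$. Since $-K_X=4H-E$, this is precisely the cone $[-K_X,H]$ in the notation of Notation \ref{notat}. To be fully rigorous one should note the boundary behavior: the ray spanned by $H$ is movable (it is base-point free), and the ray spanned by $4H-E=-K_X$ lies in chamber (3) whose stable base locus is $e_1\cup e_2$, a curve, so $-K_X$ is itself movable and belongs to the cone. The only point requiring a word of care is the passage from ``stable base locus of codimension $\ge 2$'' to ``movable'': since $X$ is a Mori dream space (its effective cone admits a finite SBLD by the analysis of this section), on each chamber the stable base locus genuinely controls movability, so no additional vanishing or asymptotic argument is needed. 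This is the main, and only mild, obstacle; it is resolved by the explicit chamber description already in hand.
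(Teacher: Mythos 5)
Your proposal is correct and matches the paper exactly: the corollary is stated there without proof as an immediate consequence of Proposition \ref{SBLDcubic}, namely that the chambers with stable base locus of codimension at least two are $[5H-E,H]$ and $[4H-E,5H-E)$, whose union is $[-K_X,H]$ since $-K_X=4H-E$. Your closing remark invoking the Mori dream space property is harmless but unnecessary (and finiteness of the SBLD alone would not prove it); the chamber-by-chamber inspection already suffices.
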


\medskip\noindent
Note that the class $-K_X$ is base-point free if $C$ is smooth ACM, and no longer nef if $C$ is contained in a cubic surface.

\subsection{The birational models of $X$.} In this section we prove Theorem \ref{teorema2}, which describes the birational models of $X$ induced by each chamber in the SBLD of $\EFF(X)$. We start by describing the first birational modification.

\subsubsection{The inverse flip of $X$.} The nontrivial edge $5H-E$ of the nef cone $\mathrm{Nef}(X)$ induces the following inverse of a flip. We follow closely \cite[Ex. 6.20]{Debarre}. Let $\ell \subset X$ stand for the strict transform of a 5-secant line to $C\subset \PP^3$. The ray $\mathbb{R}^+[\ell]\subset \mathrm{NE}(X)$ fails to be $K_{X}$-negative. However, we will show that $\ell\subset X$ is in fact the exceptional locus of a small contraction $p:X\rightarrow Y$, and that there exists a threefold $X^+$ such that $p$  is the flip of a $(K_{X^+})$-negative extremal contraction $X^+\rightarrow Y$.  In order to prove this, we construct $X^+$ next (see Definition \ref{X1}). 

\medskip\noindent
\textit{Construction:} Consider $X_1:=Bl_{\ell} X$, the blow-up of $X$ along a 5-secant line $\ell$. The exceptional divisor is denoted by $E_\ell$ and the blow-up map by $\beta:X_1\rightarrow X$. By Lemma \ref{lem::normalBundleLines}, $E_{\ell}\cong \mathbb{F}_1$ and the directrix $y\subset E_{\ell}$ has normal bundle $N_{y/X_1}\cong \mathcal{O}_y(-1)^2$. 

\medskip\noindent
Let us now denote by $\alpha:T\rightarrow X_1$ the blow-up of $X_1$ along the directrix $y\subset E_{\ell}$. The exceptional divisor of this blow-up satisfies $E_{y}= \PP^1\times \PP^1$ by Lemma \ref{lem::normalBundleDirectrix}. Observe that the bundle map $\alpha|_{E_y}:E_{y}\rightarrow y$ admits a section $t$ and the class of this curve is $K_T$-negative and extremal. Indeed, $K_T\cdot t = -1$ by adjuntion. On the other hand, the cone of curves contracted by the composition $p\circ\beta \circ \alpha:T\to Y$ is generated by the following three curves: the class of the fiber of $E_{y}\rightarrow y$, the class of the strict transform of the fiber $E_{\ell}\rightarrow \ell$, and $t$. We will call the first two curves $x$ and $z$, respectively. 
This means that the cone generated by these 3 curves is extremal. Hence, if $t$ is not extremal, then it can be written as $$t=ax+bz$$ with $a,b$ positive. This is a contradiction because if we intersect, on both sides of the equation, with $E_y$ we get $-1=a-b$, while if we intersect with $\tilde{E}_{\ell}$ (the strict transform of $E_\ell$) we get $0=-a+b$, which is absurd. 
\begin{definition}
    We denote by $X^+_1$ the image of the contraction map induced by the extremal ray $\mathbb{R}[t]$: $$\mathrm{Cont}_{t}:T\rightarrow X^+_1.$$
\end{definition}

\medskip\noindent
The map $\mathrm{Cont}_t$ is a divisorial contraction and contracts $E_{y}$ into a curve denoted $y^+$. Also, the divisor $E_{\ell}\cong \mathbb{F}_1$ maps under $\mathrm{Cont}_t$ to a surface isomorphic to $\PP^2$.

\medskip\noindent
We compute the normal bundle of $E_\ell^+:=Cont_t(\tilde{E}_\ell)$ in $X_1^+$ next. Before doing that, it is useful to state a general fact about normal bundles of surfaces in threefolds.
\begin{lem}\label{LemmaNormalBundle}
    Let $M$ be a smooth threefold, $F\subset M$ a smooth surface and $C\subset M$ a smooth curve. Denote by $M'=Bl_C(M)$, $F'\subset M'$ the strict transform of $F$, $E\subset M'$ the exceptional divisor and $\pi_0:M'\to M$ the projection. Further denote $\pi:F'\to F$ the restriction.
    \begin{enumerate}[(a)]
        \item If $C$ intersects $F$ transversally in a finite number of points then $N_{F'/M'}=\pi^*N_{F/M}$.
        \item If $C\subset F$ then $N_{F'/M'}=\pi^*N_{F/M}\otimes\OO_{F'}(-E).$
    \end{enumerate}
\end{lem}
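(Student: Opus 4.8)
The plan is to reduce everything to the standard identity $N_{F/M}\cong\OO_M(F)|_F$ for a Cartier divisor $F$ in a smooth variety, together with the way $\OO_M(F)$ transforms under the blow-up $\pi_0$. Concretely, I would first record that $F'$ is an effective Cartier divisor in the smooth threefold $M'$, so $N_{F'/M'}\cong\OO_{M'}(F')|_{F'}$, and likewise $N_{F/M}\cong\OO_M(F)|_F$. The whole computation then turns on comparing the strict transform $F'$ with the total transform $\pi_0^*F$ as divisor classes on $M'$: one always has the relation $\pi_0^*F=F'+m\,E$, where $m=\mathrm{mult}_C F$ is the order of vanishing of a local equation of $F$ along the generic point of $C$.

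The key step is to evaluate $m$ in each of the two cases. In case (a), since $C\not\subset F$, a local equation of $F$ is a unit at the generic point of $C$, so $m=0$ and $\pi_0^*F=F'$ as divisors; equivalently $\OO_{M'}(F')\cong\pi_0^*\OO_M(F)$. The transversality hypothesis is used here only to guarantee that $F'$ is smooth (it is then the blow-up of $F$ at the finitely many points of $C\cap F$), so that $\pi\colon F'\to F$ is a genuine morphism. In case (b), because $F$ is smooth and contains the smooth curve $C$, a local equation of $F$ vanishes to order exactly one along $C$, so $m=1$ and $\OO_{M'}(F')\cong\pi_0^*\OO_M(F)\otimes\OO_{M'}(-E)$.

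Finally I would restrict to $F'$ and use the commutativity of the square $\pi_0\circ\iota_{F'}=\iota_F\circ\pi$, where $\iota_{F'}$ and $\iota_F$ denote the inclusions. This gives the functoriality identity $\bigl(\pi_0^*\OO_M(F)\bigr)\big|_{F'}\cong\pi^*\bigl(\OO_M(F)|_F\bigr)\cong\pi^*N_{F/M}$. Combining this with the two cases above yields $N_{F'/M'}\cong\pi^*N_{F/M}$ in case (a), and $N_{F'/M'}\cong\pi^*N_{F/M}\otimes\OO_{F'}(-E)$ in case (b), where $\OO_{F'}(-E)$ is understood as $\OO_{M'}(-E)|_{F'}$, as in the statement.

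The main obstacle is the determination of the multiplicity $m$, i.e. establishing $m=1$ in case (b); this is precisely where the smoothness of $F$ enters, since it forces $C$ to be a Cartier divisor on $F$ and the ideal of $F$ in $M$ to vanish to order one along $C$. If a cleaner justification is desired, this can be checked in local coordinates by taking $C=\{x=y=0\}$ and writing $F$ as $\{z=g(x,y)\}$ in case (a), respectively $F=\{f=0\}$ with $C\subset F$ smooth in case (b), and examining the pullback of $f$ in a chart of the blow-up; the order of divisibility by the local equation of $E$ reads off $m$ directly.
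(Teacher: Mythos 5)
Your proof is correct, but it follows a genuinely different route from the paper's. The paper computes the normal bundle by adjunction: it writes $N_{F'/M'}=\OO_{F'}(F'|_{F'})$ with $F'|_{F'}=K_{F'}-K_{M'}|_{F'}$, inserts the blow-up formula $K_{M'}=\pi_0^*K_M+E$, and uses the structure of the induced map $\pi\colon F'\to F$ --- in case (a) the blow-up of $F$ at the finitely many points of $F\cap C$, so that $K_{F'}=\pi^*K_F+E|_{F'}$, and in case (b) an isomorphism (blowing up a Cartier divisor of $F$), so that $K_{F'}=\pi^*K_F$ --- which yields $F'|_{F'}=\pi^*(F|_F)$, respectively $\pi^*(F|_F)-E|_{F'}$. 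You instead compare strict and total transforms, $\pi_0^*F=F'+mE$ with $m=\mathrm{mult}_C F$, evaluate $m=0$ when $C\not\subset F$ and $m=1$ when $C\subset F$ (smoothness of $F$ along $C$ forces a local equation to lie in $\mathcal{I}_C\setminus\mathcal{I}_C^2$, since an equation in $\mathcal{I}_C^2$ would have vanishing differential along $C$), and then restrict to $F'$ using $\pi_0\circ\iota_{F'}=\iota_F\circ\pi$. Your route is the more elementary of the two and is slightly more robust: the Cartier-divisor identity $N_{F'/M'}=\OO_{M'}(F')|_{F'}$ and the multiplicity computation make sense even without transversality in case (a), where the hypothesis is only needed so that $F'$ is smooth --- whereas the paper's case (a) genuinely uses transversality to identify $E|_{F'}$ with the reduced exceptional divisor of the point blow-up $\pi$ in the formula for $K_{F'}$. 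Conversely, the paper's argument hides the multiplicity bookkeeping inside two standard canonical-bundle formulas. One tiny slip in your write-up: $\pi$ is a morphism in all cases, being the restriction of $\pi_0$ to the strict transform; transversality is not needed for $\pi$ to be defined, only for the smoothness of $F'$.
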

\begin{proof}
    For case (a), note that $E|_{F'}$ is equal to the exceptional divisor of the blow-up $\pi:F'\to F$. Hence, $K_{F'}=\pi^*(K_F)+E|_{F'}$, so the normal bundle $N_{F'/M'}$ is the line bundle associated to the divisor class
    \begin{align*}
        F'|_{F'}&=K_{F'}-K_{M'}|_{F'}\\
        &=\pi^*(K_F)+E|_{F'}-K_{M'}|_{F'}\\
        &=\pi^*(K_F)-\pi^*(K_M)\\
        &=\pi^*(F|_F).
    \end{align*}
    The argument is similar for case (b), with the difference that $K_{F'}=\pi^*(K_F)$.\\
\end{proof}

\begin{cor}\label{Corollary5.9}
    The normal bundle of $E^+_\ell:=\mathrm{Cont}_t(E_\ell)$ in $X_1^+$ is isomorphic to $\OO_{\PP^2}(-2).$
\end{cor}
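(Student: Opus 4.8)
The plan is to track the divisor $E_\ell$ through the two blow-ups and one contraction that define $X_1^+$, and to compute the self-intersection of $E_\ell^+$ on a suitable surface in order to pin down its normal bundle on $\PP^2$. We know from the construction that $E_\ell \cong \mathbb{F}_1$, that $\mathrm{Cont}_t$ maps it isomorphically onto a $\PP^2$, and that this is precisely the step where the directrix $y$ (a $(-1)$-curve on $\mathbb{F}_1$) gets blown down. Since $N_{E_\ell^+/X_1^+}$ is a line bundle on $\PP^2$, it is $\OO_{\PP^2}(k)$ for some integer $k$, and the whole problem reduces to computing $k$.

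First I would determine $k$ by restricting to a line $\lambda \subset E_\ell^+ \cong \PP^2$ and computing the degree of $N_{E_\ell^+/X_1^+}|_\lambda = \OO_{E_\ell^+}(E_\ell^+)|_\lambda$; equivalently $k = (E_\ell^+)^2 \cdot_{E_\ell^+} \lambda$ read off via the adjunction/self-intersection formalism on the surface $E_\ell^+$. The cleanest route is to pull everything back to $T$, where both blow-ups have been performed and all divisor classes are explicit. On $T$ the strict transform $\tilde E_\ell$ is the blow-up of $\mathbb{F}_1$ at no points (it is the pullback of $E_\ell$, with $E_y$ meeting it along the directrix $y$), and $\mathrm{Cont}_t$ contracts $E_y$ to the curve $y^+$ and blows the directrix down inside $\tilde E_\ell$. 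I would use the projection formula for $\mathrm{Cont}_t$ together with the known relation $N_{y/X_1}\cong \OO_{\PP^1}(-1)^2$ (Lemma \ref{lem::normalBundleDirectrix}) and $E_\ell\cong\mathbb{F}_1$ with directrix $y$, $y^2=-1$ (Lemma \ref{lem::normalBundleLines}, case c).

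The key computation is to express $\OO_{E_\ell^+}(E_\ell^+)$ in terms of classes on $\mathbb{F}_1$ and then push forward under the contraction $y\mapsto \mathrm{pt}$, which sends $\mathbb{F}_1\to\PP^2$. Because $\mathrm{Cont}_t$ blows down the directrix $y$, a line $\lambda\subset\PP^2$ pulls back to the fiber class $f$ of $\mathbb{F}_1$ (plus a multiple of $y$). Using that $\mathrm{Cont}_t$ is a divisorial contraction of $E_y$, one has $\tilde E_\ell|_T = \beta\alpha$-pullback of $E_\ell$ corrected by the exceptional divisors, and $\mathrm{Cont}_t^*(E_\ell^+) = \tilde E_\ell + m E_y$ for the discrepancy coefficient $m$ forced by $\mathrm{Cont}_t^*(E_\ell^+)\cdot t = 0$. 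Intersecting against the section $t$ and the fiber $x$ of $E_y\to y$ determines $m$, after which $(E_\ell^+)^2\cdot \lambda$ is computed on $T$ by $\big(\mathrm{Cont}_t^*(E_\ell^+)\big)^2\cdot \mathrm{Cont}_t^*(\lambda)$, using the intersection numbers of $\tilde E_\ell$, $E_y$ and their restrictions that follow from the normal bundles already computed.

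The main obstacle I anticipate is bookkeeping the correction terms cleanly: one must correctly identify the discrepancy of $E_\ell$ under the blow-up $\alpha$ of the directrix (so that $\tilde E_\ell = \alpha^*E_\ell - E_y$ or the appropriate multiple) and the discrepancy of $E_\ell^+$ under $\mathrm{Cont}_t$, and keep straight which fiber/section classes restrict to what on the three distinguished surfaces $\tilde E_\ell\cong\mathbb{F}_1$, $E_y\cong\PP^1\times\PP^1$, and $E_\ell^+\cong\PP^2$. A useful sanity check is the adjunction relation $K_{E_\ell^+}=(K_{X_1^+}+E_\ell^+)|_{E_\ell^+}$: since $E_\ell^+\cong\PP^2$ has $K_{E_\ell^+}=\OO_{\PP^2}(-3)$, once $K_{X_1^+}|_{E_\ell^+}$ is known from the contraction data this directly yields $\OO_{\PP^2}(E_\ell^+|_{E_\ell^+})=\OO_{\PP^2}(-2)$, confirming $k=-2$ and hence $N_{E_\ell^+/X_1^+}\cong\OO_{\PP^2}(-2)$ as claimed.
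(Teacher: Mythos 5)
Your proposal is correct and would indeed produce $k=-2$, but it follows a genuinely different route from the paper. You reduce the statement to a single integer $k$ with $N_{E_\ell^+/X_1^+}\cong\OO_{\PP^2}(k)$ (legitimate since $\mathrm{Pic}(\PP^2)\cong\ZZ$) and compute $k$ numerically on $T$ via $\tilde E_\ell=\alpha^*E_\ell-E_y$, the pullback $\mathrm{Cont}_t^*(E_\ell^+)=\tilde E_\ell+mE_y$ and the projection formula against the strict transform of a line. The paper instead identifies the bundle itself rather than its degree: it applies Lemma \ref{LemmaNormalBundle}(b) to the blow-up $\alpha$ along $y\subset E_\ell$ to get $N_{\tilde E_\ell/T}\cong\OO_{\tilde E_\ell}(-2)$, the pullback of $\OO_{\PP^2}(-2)$; it then proves that the curve $t'=\tilde E_\ell\cap E_y$ lies in the ruling class $t$ (via $(t')^2=\tilde E_\ell^2\cdot E_y=c_1\OO_{\tilde E_\ell}(-2)\cdot y=0$), so that $\mathrm{Cont}_t$, viewed as the blow-up of $X_1^+$ along $y^+$, meets $E_\ell^+$ transversally in one point, and Lemma \ref{LemmaNormalBundle}(a) yields $N_{\tilde E_\ell/T}=\mathrm{Cont}_t^*N_{E_\ell^+/X_1^+}$; injectivity of $\mathrm{Cont}_t^*$ on Picard groups concludes. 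Two remarks on your version. First, determining $m$ from $\mathrm{Cont}_t^*(E_\ell^+)\cdot t=0$ tacitly requires $\tilde E_\ell\cdot t$, i.e.\ the class of $t'$ in $E_y$ --- exactly the pivotal check the paper isolates; your bookkeeping does supply it, since $(\alpha^*E_\ell-E_y)^2\cdot E_y=0$ follows from $\OO_{E_\ell}(E_\ell)=\OO_{E_\ell}(-2f-y)$ and $\OO_{E_y}(E_y)=\OO_{E_y}(-1,-1)$, which are precisely the inputs from Lemmas \ref{lem::normalBundleLines}(c) and \ref{lem::normalBundleDirectrix} that you cite, so there is no gap. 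Second, be careful that your adjunction ``sanity check'' runs in the opposite direction from the paper: Lemma \ref{lem::lambdaIsExtremal} deduces $K_{X_1^+}\cdot\lambda=-1$ \emph{from} Corollary \ref{Corollary5.9}, so to avoid circularity you must compute $K_{X_1^+}\cdot\lambda$ independently, e.g.\ from $K_T=\mathrm{Cont}_t^*K_{X_1^+}+E_y$ together with the blow-up formulas for $K_T$ over $X$; this does give $-1$ and hence $k=-2$. In sum, your route buys a purely numerical verification avoiding any functorial identification of bundles, while the paper's buys a cleaner structural identification through a reusable normal-bundle lemma; both ultimately rest on the same two normal-bundle computations.
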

\begin{proof}
    It follows from Proposition \ref{lem::normalBundleLines} that $N_{\ell/X}\cong\OO_{\PP^1}(-2)\oplus\OO_{\PP^1}(-1)$. Therefore, $E_\ell\cong\mathbb{F}_1$ has normal bundle $N_{E_\ell/X_1}=\OO_{E_\ell}(E_\ell)\cong\OO_{E_\ell}(-2)\otimes\OO_{E_\ell}(y)$, where $\mathcal{O}_{E_\ell}(-2)$ denotes the pullback of $\mathcal{O}_{\mathbb{P}^2}(-2)$ under the contraction $E_\ell\cong\mathbb{F}_1\to\mathbb{P}^2$ of the (-1)-curve $y$. We can go from $X_1$ to $X_1^+$ by first blowing-up $y$ and then contracting the other ruling inside the exceptional divisor $E_y$ (which is isomorphic to $\PP^1\times\PP^1$ by Lemma \ref{lem::normalBundleDirectrix}). 
    
    By Lemma \ref{LemmaNormalBundle} (b), $N_{\tilde{E}_\ell/T}\cong\mathcal{O}_{\tilde{E}_\ell}(-2)$. We now want to compare $N_{\tilde{E}_\ell/T}$ with $N_{E_\ell^+/X_1^+}$ via $Cont_t$ using Lemma \ref{LemmaNormalBundle} (a). In order to do this, we first want to prove that the class of the curve
    $$t':=\tilde{E}_\ell\cap E_y$$
    is $t$. This curve maps isomorphically into $y$ via $\alpha$, so clearly $x\cdot y=1$ in $E_y$. It suffices to prove that $(t')^2=0$ in $E_y$. To that end, we  write this number as
    $$(t')^2=\tilde{E}_\ell^2\cdot E_y=\tilde{E}_\ell|_{\tilde{E}_\ell}\cdot y,$$
    where the latter intersection occurs in $\tilde{E}_\ell$. Since $\tilde{E}_\ell|_{\tilde{E}_\ell}=c_1 N_{\tilde{E}_\ell/T}=c_1\mathcal{O}_{\tilde{E}_\ell}(-2)$, then $(t')^2=0$.

    Now we come back to the restriction $Cont_t:\tilde{E}_\ell\to E_\ell^+$, which the previous argument exhibits as the blow-down  $\mathbb{F}_1\to\mathbb{P}^2$ of the (-1)-curve. By Lemma \ref{LemmaNormalBundle} (a) we have
    $$\mathcal{O}_{\tilde{E}_\ell}(-2)=N_{\tilde{E}_\ell/T}=Cont_t^*N_{E_\ell^+/X_1^+}.$$
    Since $Cont_t^*:Pic(E_\ell^+)\to Pic(\tilde{E}_\ell)$ is injective, it follows that $N_{E_\ell^+/X_1^+}\cong\mathcal{O}_{\mathbb{P}^2}(-2)$, as claimed.\\
\end{proof}

\medskip\noindent
Note that the morphism $Cont_{t}$ induces a flop $\phi:X_{1}\to X_{1}^{+}$.

\begin{lem}\label{lem::lambdaIsExtremal}
Let $\lambda$ denote the class of a line in $E^+_\ell=\PP^2$. Then $\lambda \subset \mathrm{NE}(X_1^+)$ is an extremal $(K_{X_1^+})$-negative class. 
\end{lem}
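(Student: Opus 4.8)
The plan is to verify the two required properties of $\lambda$ directly: extremality in the cone of curves $\mathrm{NE}(X_1^+)$ and negativity against $K_{X_1^+}$. These two computations are independent and I would carry them out in that order.

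For the $K_{X_1^+}$-negativity, I would use the normal bundle computation from Corollary \ref{Corollary5.9}, namely $N_{E_\ell^+/X_1^+}\cong\OO_{\PP^2}(-2)$. Since $E_\ell^+\cong\PP^2$ is a smooth surface embedded in the smooth threefold $X_1^+$, adjunction gives $K_{E_\ell^+}=(K_{X_1^+}+E_\ell^+)|_{E_\ell^+}$. Restricting to the line $\lambda$ and using $K_{\PP^2}\cdot\lambda=-3$ together with $E_\ell^+|_{E_\ell^+}\cdot\lambda=c_1(N_{E_\ell^+/X_1^+})\cdot\lambda=\OO_{\PP^2}(-2)\cdot\lambda=-2$, I obtain
$$
K_{X_1^+}\cdot\lambda=K_{E_\ell^+}\cdot\lambda-E_\ell^+|_{E_\ell^+}\cdot\lambda=-3-(-2)=-1<0,
$$
which establishes $K_{X_1^+}$-negativity.

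For extremality, the natural approach is to transport the extremality already proved in the \emph{Construction} preceding Definition \ref{X1} across the contraction $\mathrm{Cont}_t:T\to X_1^+$. Recall that in $T$ the contracted cone is generated by the three curve classes $x$, $z$ and $t$, and $\mathrm{Cont}_t$ contracts exactly the ray $\RR[t]$. Under $\mathrm{Cont}_t$, the surface $\tilde E_\ell\cong\mathbb{F}_1$ maps to $E_\ell^+\cong\PP^2$ by blowing down the $(-1)$-curve, so the image $\lambda$ of a general fiber class of $\tilde E_\ell$ (equivalently, the image of $z$, the strict transform of the fiber of $E_\ell\to\ell$) spans the line class in $\PP^2$. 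The key point is that $\mathrm{Cont}_t$ identifies $\mathrm{NE}(X_1^+)$ with the quotient of $\mathrm{NE}(T)$ by the contracted ray, and an extremal face of $\mathrm{NE}(T)$ maps to an extremal face of $\mathrm{NE}(X_1^+)$; since the cone $\langle x,z,t\rangle$ was shown to be extremal in $\mathrm{NE}(T)$, its image $\langle \bar x,\bar z\rangle$ is extremal in $\mathrm{NE}(X_1^+)$, and I must check that $\lambda=\bar z$ (or an appropriate generator) spans a genuine edge of this image rather than lying in its interior. I would argue this by pairing $\lambda$ against the divisor $E_\ell^+$: since $E_\ell^+\cdot\lambda=-2<0$, any curve with nonnegative intersection against $E_\ell^+$ cannot be a positive combination involving $\lambda$, which isolates $\RR^+[\lambda]$ as an extremal ray.

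The main obstacle I anticipate is the extremality step rather than the negativity computation. Verifying that $\lambda$ spans a true edge of $\mathrm{NE}(X_1^+)$ requires care about which classes survive the contraction $\mathrm{Cont}_t$ and how the two-dimensional face $\langle x,z,t\rangle\subset\mathrm{NE}(T)$ descends. One must ensure that the image classes $\bar x$ and $\bar z$ remain distinct and that $\lambda$ is not accidentally proportional to some other effective class coming from outside the exceptional locus. I expect the cleanest route is the negativity argument: because $E_\ell^+$ is an irreducible effective divisor with $E_\ell^+\cdot\lambda<0$ and $\lambda$ is a moving curve inside $E_\ell^+\cong\PP^2$, the same principle used in Proposition \ref{effectiveNefD3} (via \cite[Lemma 4.1]{CoskunChen}, applied dually) forces $\RR^+[\lambda]$ to be extremal, thereby sidestepping the more delicate cone-theoretic bookkeeping.
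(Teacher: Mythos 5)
Your proposal is correct, and its two halves relate to the paper differently. The $K_{X_1^+}$-negativity computation is exactly the paper's argument: adjunction $(K_{X_1^+}+E_\ell^+)|_{E_\ell^+}=K_{E_\ell^+}$ together with $E_\ell^+|_{E_\ell^+}=-2\lambda$ from Corollary \ref{Corollary5.9} gives $K_{X_1^+}\cdot\lambda=-3-(-2)=-1$, word for word. For extremality, however, the paper offers no argument at all beyond citing \cite[pp.~163]{Debarre} (this construction is modeled on Debarre's Example 6.20, where the $\PP^2$ with normal bundle $\OO_{\PP^2}(-2)$ and its contraction are treated), whereas you supply two self-contained routes. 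Your second route is the cleaner one and is essentially complete, but note that the invocation of \cite[Lemma 4.1]{CoskunChen} ``applied dually'' does not by itself yield extremality of a \emph{curve} class; what actually closes the argument is that $E_\ell^+\cong\PP^2$ has $N_1(\PP^2)\cong\RR$, so every irreducible curve $\gamma$ with $E_\ell^+\cdot\gamma<0$ lies in $E_\ell^+$ and is therefore numerically a positive multiple of $\lambda$. Then $\overline{\mathrm{NE}}(X_1^+)$ decomposes as the $E_\ell^+$-nonnegative part plus $\RR^+\lambda$, and strict convexity of the cone forces any decomposition $\lambda=\alpha+\beta$ to have both summands proportional to $\lambda$; you should state this Picard-rank-one input explicitly. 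Your first route (descending the face $\langle x,z,t\rangle$ through $\mathrm{Cont}_t$) also works, but only because that face \emph{contains} the contracted ray $\RR[t]$: the correspondence between faces of $\mathrm{NE}(X_1^+)$ and faces of $\mathrm{NE}(T)$ containing $\RR[t]$ requires the supporting nef divisor to descend, which is legitimate here precisely because $t$ is a $K_T$-negative extremal ray (contraction theorem); arbitrary extremal faces do not push forward to extremal faces. In short: your proof buys a genuine verification where the paper delegates to a reference, at the cost of cone-theoretic bookkeeping that your second argument reduces to the single fact $\rho(E_\ell^+)=1$.
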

\begin{proof} This follows from \cite[pp. 163]{Debarre}.
Indeed, to verify that $\lambda$ is $K_{X_1^+}$-negative, note that $(K_{X_1^+}+E_{\ell}^+)_{|E_{\ell}^+}=K_{E_{\ell}^+}$, and Corollary \ref{Corollary5.9} yields the restriction ${E_{\ell}^+}_{|E_{\ell}^+}=-2\lambda$. Since $K_{E_{\ell}^+}=-3\lambda$, it follows that $K_{X_1^+}\cdot\lambda=-1$.\\ 
\end{proof}

\begin{definition}\label{X1}
 Let $X^{+}$ be the image of the contraction map $\mathrm{Cont}_{\lambda}:X_1^+\rightarrow X^+$ induced by the extremal ray $\mathbb{R}[\lambda]$.
\end{definition}
\begin{lem}\label{extremalCurve1}
The morphism $\mathrm{Cont}_{\lambda}:X_1^+\rightarrow X^+$ contracts the plane $E^+_{\ell}=\PP^2$ to a point of multiplicity $4$.
\end{lem}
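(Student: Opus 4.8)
The plan is to read off the local structure of $X^+$ near the image point directly from the data already assembled. By Corollary \ref{Corollary5.9} the contracted divisor is $E_\ell^+\cong\PP^2$ with normal bundle $N_{E_\ell^+/X_1^+}\cong\OO_{\PP^2}(-2)$, and by Lemma \ref{lem::lambdaIsExtremal} the line class $\lambda$ spanning $\mathrm{NE}(E_\ell^+)$ is $K_{X_1^+}$-negative and extremal. First I would confirm that $\mathrm{Cont}_\lambda$ contracts $E_\ell^+$ to a point rather than to a curve. Since $\mathrm{Pic}(E_\ell^+)=\ZZ\cdot h$, every curve contained in $E_\ell^+$ is numerically proportional to $\lambda$, so the restriction of $\mathrm{Cont}_\lambda$ to $E_\ell^+$ contracts every curve of $E_\ell^+$; hence the image $x^+:=\mathrm{Cont}_\lambda(E_\ell^+)$ is a single point. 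Moreover $\mathrm{Cont}_\lambda$ is genuinely divisorial because $E_\ell^+|_{E_\ell^+}=-2h$ is anti-ample on $E_\ell^+$.

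Next I would identify the germ of $X^+$ at $x^+$. The restriction $\OO_{E_\ell^+}(-E_\ell^+)\cong\OO_{\PP^2}(2)$ is very ample, realizing $E_\ell^+$ as the Veronese surface $v_2(\PP^2)\subset\PP^5$. Because the normal bundle is $\OO_{\PP^2}(-2)$ and the relevant deformation/obstruction groups on $\PP^2$ vanish, the analytic neighborhood of $E_\ell^+$ in $X_1^+$ is isomorphic to a neighborhood of the zero section in the total space of $\OO_{\PP^2}(-2)$. Its contraction is precisely the affine cone over $v_2(\PP^2)$, equivalently the cyclic quotient (terminal) singularity $\tfrac12(1,1,1)=\A^3/(\ZZ/2)$. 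Thus $\mathrm{Cont}_\lambda$ exhibits $X_1^+$ as the blow-up of the reduced point $x^+\in X^+$, whose projectivized tangent cone is the Veronese surface $v_2(\PP^2)$.

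Finally I would compute the multiplicity. For a divisorial contraction realized as the blow-up of a reduced point $x^+$ with irreducible exceptional divisor $E_\ell^+$, the multiplicity of $x^+$ equals the degree of the projectivized tangent cone, i.e.\ the degree of $v_2(\PP^2)\subset\PP^5$, which is $(\OO_{\PP^2}(2))^2=4$. Equivalently, one reads off $\mathrm{mult}_{x^+}X^+=(-E_\ell^+|_{E_\ell^+})^2=(2h)^2=4=(E_\ell^+)^3$. This yields the claimed multiplicity $4$. The one genuinely technical point is the identification of the analytic germ of $X^+$ at $x^+$ with the cone over the Veronese, equivalently the assertion that $\mathrm{Cont}_\lambda$ is the blow-up of the maximal ideal $\mathfrak{m}_{x^+}$; once the normal bundle $\OO_{\PP^2}(-2)$ from Corollary \ref{Corollary5.9} is in hand, this is the standard local description of the $\tfrac12(1,1,1)$ divisorial contraction, and the multiplicity computation itself is then immediate.
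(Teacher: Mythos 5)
Your proposal is correct, but it takes a genuinely different route from the paper. The paper's proof is a one-line citation: by Corollary \ref{Corollary5.9} the contraction $\mathrm{Cont}_\lambda$ falls into case E5 of Mori's classification of $K$-negative extremal contractions on smooth threefolds \cite{KM}, which is exactly the blow-down of a plane with normal bundle $\OO_{\PP^2}(-2)$ to a $\tfrac12(1,1,1)$ quotient point. You instead reprove this classification fact in the special case at hand: you show directly that $E_\ell^+$ goes to a point (all curves in $\PP^2$ are proportional to $\lambda$), identify the analytic germ as the cone over the Veronese $v_2(\PP^2)\subset\PP^5$ via formal linearization of the neighborhood, and read off the multiplicity as the degree of the projectivized tangent cone, $\bigl(\OO_{\PP^2}(2)\bigr)^2=4$. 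Both arguments rest on the same input --- the normal bundle computation of Corollary \ref{Corollary5.9} and the smoothness of $X_1^+$ (which holds since $\phi$ is an Atiyah flop of the smooth $X_1$; you use smoothness implicitly in the linearization step and should say so). What the paper's citation buys is brevity and the avoidance of the formal-neighborhood argument, which is the one technical point you correctly flag: one needs the vanishings $H^1(\PP^2, T_{\PP^2}(2k))=0$ and $H^1(\PP^2,\OO(2k-2))=0$ for $k\ge1$ to linearize the germ, all of which hold on $\PP^2$, so your argument is complete. What your route buys is that the multiplicity-$4$ assertion --- which in the paper is implicit in the label ``E5'' and the identification of $\tfrac12(1,1,1)$ with the cone over the Veronese --- is made explicit, via the clean identity $\mathrm{mult}_{x^+}X^+=\bigl(-E_\ell^+|_{E_\ell^+}\bigr)^2=(E_\ell^+)^3=4$.
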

\begin{proof}
This follows from Mori's classification of $K_{X_1^+}$-negative extremal contractions on smooth treefolds \cite[pp. 28]{KM}, with respect to which $\mathrm{Cont}_{\lambda}$ is of type E5 by Corollary \ref{Corollary5.9}.\\
\end{proof}

\medskip\noindent 
We summarize the construction of $X^+$ in Figure \ref{flopX1}.
 \begin{figure}[h]
\centering
\includegraphics[scale=.55]{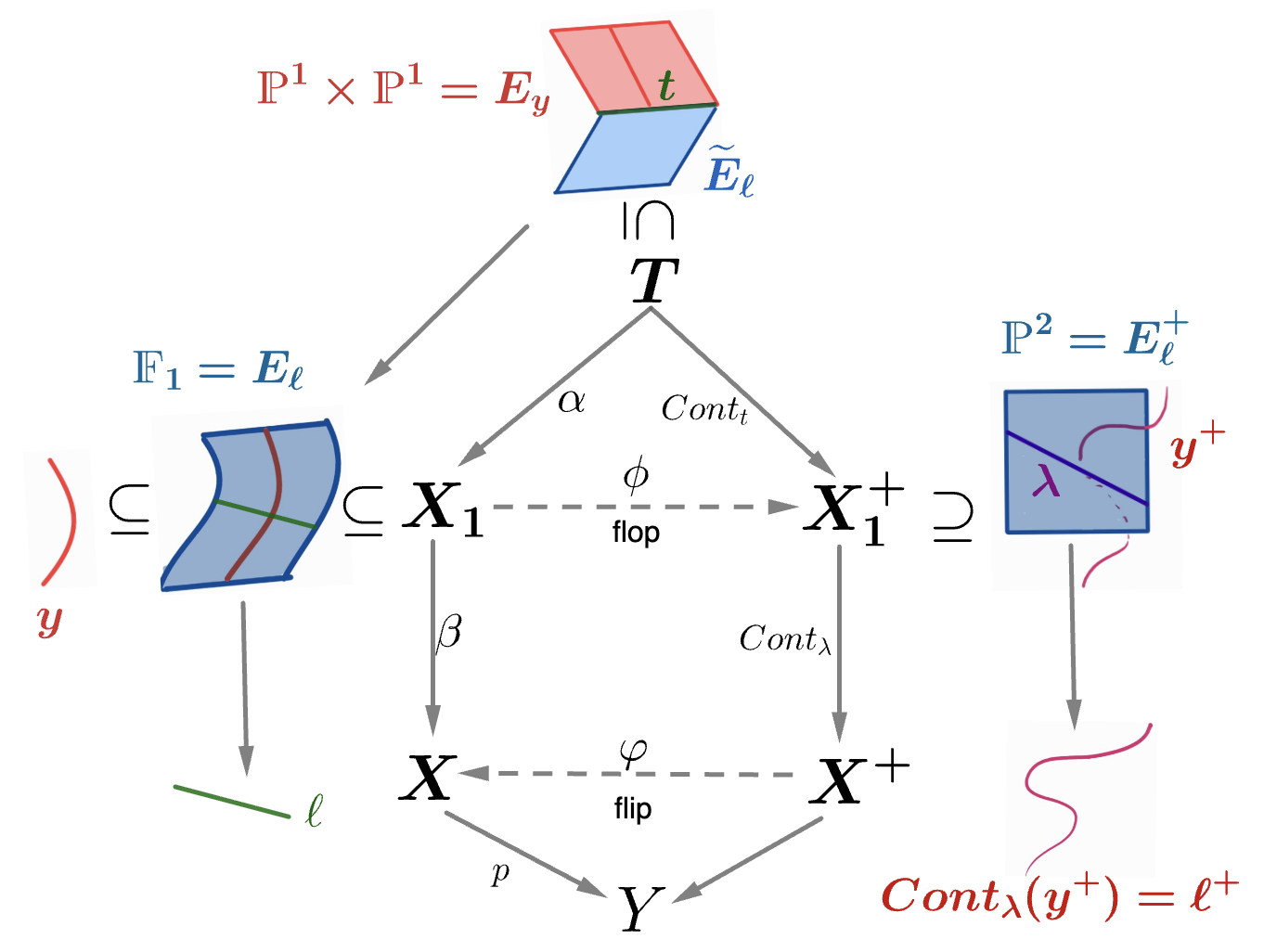}
\caption{The inverse flip of X}\label{flopX1}
\end{figure}
\begin{lem}\label{toY}
    The orthogonal space $\langle\lambda\rangle^\bot\subset N^1(X_1^+)$ is spanned by the classes $$P:=\phi_*(5H-E), \quad \mbox{and } \quad \phi_*(3H-E-E_\ell).$$
Also, $P\cdot y^+=0$, where $y^+$ is the flopped curve of $y\subset E_{\ell}$.

\end{lem}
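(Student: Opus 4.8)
The plan is to work directly in the Néron--Severi group of $X_1^+$. Since the flop $\phi\colon X_1\dashrightarrow X_1^+$ is an isomorphism in codimension one, the pushforward $\phi_*\colon N^1(X_1)\to N^1(X_1^+)$ is an isomorphism, so $N^1(X_1^+)$ is the rank-three lattice spanned by $\phi_*H,\phi_*E,\phi_*E_\ell$. By Corollary \ref{Corollary5.9} the restriction of $E_\ell^+=\phi_*E_\ell$ to $E_\ell^+\cong\PP^2$ is $\OO_{\PP^2}(-2)$, hence $E_\ell^+\cdot\lambda=-2\neq0$; thus $D\mapsto D\cdot\lambda$ is a nonzero functional and $\langle\lambda\rangle^\bot$ is exactly $2$-dimensional. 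It therefore suffices to check that the two classes $P=\phi_*(5H-E)$ and $\phi_*(3H-E-E_\ell)$ both pair to zero with $\lambda$ and are linearly independent; independence is immediate, since in the basis above they read $(5,-1,0)$ and $(3,-1,-1)$.

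The core of the argument is a formula expressing $\phi_*D\cdot\lambda$ as an intersection number on $E_\ell\cong\FF_1$. First I would record the restrictions to $E_\ell$, writing $y$ for the directrix ($y^2=-1$) and $f$ for the fibre of $E_\ell\to\ell$. Since $\ell$ is a line meeting $C$ in five points, $H\cdot\ell=1$ and $E\cdot\ell=5$ give $H|_{E_\ell}=f$ and $E|_{E_\ell}=5f$, while Corollary \ref{Corollary5.9} yields $E_\ell|_{E_\ell}=-y-2f$. Consequently $(5H-E)|_{E_\ell}=0$ and $(3H-E-E_\ell)|_{E_\ell}=y$. To transport these to $X_1^+$ I would use the roof $T$, with $\alpha\colon T\to X_1$ the blow-up of $y$ and $Cont_t\colon T\to X_1^+$. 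A general line $\lambda\subset E_\ell^+$ lifts to its strict transform $\widetilde\lambda\subset\widetilde{E}_\ell$, which under the blow-down $Cont_t|_{\widetilde E_\ell}\colon\FF_1\to\PP^2$ (contracting $t'=\widetilde E_\ell\cap E_y$, the copy of $y$) lies in the class $y+f$ and maps isomorphically to $\lambda$. Writing $Cont_t^*(\phi_*D)=\alpha^*D+(D\cdot y)E_y$ and intersecting with $\widetilde\lambda$, the correction term drops out because $E_y\cdot\widetilde\lambda=t'\cdot(y+f)=y\cdot(y+f)=0$. This leaves the clean identity
\[
\phi_*D\cdot\lambda=(D|_{E_\ell})\cdot(y+f).
\]

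Applying this gives $P\cdot\lambda=0\cdot(y+f)=0$ and $\phi_*(3H-E-E_\ell)\cdot\lambda=y\cdot(y+f)=y^2+y\cdot f=-1+1=0$, so both classes lie in $\langle\lambda\rangle^\bot$ and hence span it; as a consistency check the same formula recovers $E_\ell^+\cdot\lambda=(-y-2f)\cdot(y+f)=-2$. Finally, for $P\cdot y^+$ I would invoke the standard flop relation $\phi_*D\cdot y^+=-D\cdot y$: since the directrix $y$ maps isomorphically onto $\ell$ under $\beta$ and $(5H-E)\cdot\ell=5\cdot1-5=0$ because $5H-E$ contracts the $5$-secant $\ell$, we conclude $P\cdot y^+=-(5H-E)\cdot y=0$.

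The main obstacle is the flop bookkeeping in the middle step: correctly identifying the class $y+f$ of the lift $\widetilde\lambda$ on $\widetilde E_\ell\cong\FF_1$ and verifying that the $E_y$-correction in $Cont_t^*(\phi_*D)$ annihilates $\widetilde\lambda$, so that $\phi_*D\cdot\lambda$ genuinely reduces to the intersection $(D|_{E_\ell})\cdot(y+f)$ on $E_\ell$. Once this identity is established the remaining verifications are routine one-line computations.
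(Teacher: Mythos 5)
Your proposal is correct, but it takes a genuinely different route from the paper's. The paper argues by disjointness of effective representatives: the strict transform $F$ of a general quintic through $C$ is disjoint from $E_\ell$ in $X_1$ (since $(5H-E)\cdot\ell=0$), passes untouched through the roof $T$, and hence $\phi_*F$ is disjoint from both $E_\ell^+$ and $y^+$, giving $P\cdot\lambda=P\cdot y^+=0$; similarly, the strict transform of the cubic $S$ meets $E_\ell$ transversally exactly in $y$, so after blowing up $y$ the two surfaces separate and remain separated under $\mathrm{Cont}_t$ because they contain different rulings of $E_y$, whence $\phi_*(3H-E-E_\ell)\cdot\lambda=0$. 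You instead prove the numerical identity $\phi_*D\cdot\lambda=(D|_{E_\ell})\cdot(y+f)$ for an arbitrary class $D$, via the pullback relation $\mathrm{Cont}_t^*(\phi_*D)=\alpha^*D+(D\cdot y)E_y$ together with the class bookkeeping $t'=y$ and $\widetilde\lambda=y+f$ on $\widetilde{E}_\ell\cong\FF_1$; I checked the ingredients ($H|_{E_\ell}=f$, $E|_{E_\ell}=5f$, and $E_\ell|_{E_\ell}=-y-2f$, which matches $\OO_{E_\ell}(-1)=\OO_{E_\ell}(-2f-y)$ in Lemma \ref{lem::normalBundleDirectrix}, while the coefficient $D\cdot y$ is confirmed by intersecting your relation with $t$, using $E_y\cdot t=-1$), and your flop relation $\phi_*D\cdot y^+=-D\cdot y$ for the last claim follows from the same pullback identity by intersecting with the ruling $x$. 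Your computation buys three things the paper leaves implicit: an explicit proof that $\langle\lambda\rangle^\perp$ is exactly two-dimensional (via $E_\ell^+\cdot\lambda=-2\neq0$; the paper only shows its two classes are independent and orthogonal to $\lambda$), a reusable formula valid for every divisor class rather than for two well-chosen effective representatives, and the consistency check recovering $E_\ell^+\cdot\lambda=-2$. The paper's argument is shorter and more geometric; yours is more systematic and self-verifying. One cosmetic point: the facts you attribute to Corollary \ref{Corollary5.9} ($E_\ell|_{E_\ell}=-y-2f$ and the description of $\mathrm{Cont}_t|_{\widetilde{E}_\ell}$ as the blow-down $\FF_1\to\PP^2$ contracting $t'$) actually come from Lemma \ref{lem::normalBundleDirectrix} and the \emph{proof} of Corollary \ref{Corollary5.9}, not from its statement alone.
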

\begin{proof}
    The classes $5H-E$ and $3H-E-E_\ell$ are linearly independent in $X_1$; therefore, they induce linearly independent classes in $X_1^+$. It then suffices to show that they are orthogonal to $\lambda$.

    In $X$, the class $5H-E$ pairs zero with the 5-secant $\ell$. If $F$ is the strict transform of a general degree 5 surface containing $C$, this means that $F$ and $E_\ell$ are disjoint in $X_1$. To go from $X_1$ to $X_1^+$, one first blows-up the directrix $y\subset E_{\ell}\cong \mathbb{F}_1$ and then contracts the ruling $t$ of the exceptional divisor $E_y\cong\PP^1\times\PP^1$ transversal to the fibers of the projection $E_y\to y$. Since $F$ and $E_\ell$ are disjoint, this process does not modify $F$. Therefore, $\phi_*(F)$ is disjoint to ${E}^+_\ell$ in $X_1^+$. This implies that $P\cdot\lambda =0$. A similar analysis exhibits that $y^+$ is also disjoint from $\phi_*(F)$, which means $P\cdot y^+=0$.

    Similarly, consider the effective representative of $3H-E-E_{\ell}$ which is the strict transform of the cubic surface $S$ containing $C$. We have seen that, inside $X_1$, $S\cap E_{\ell}=y$, where the intersection is transversal. Therefore, the strict transforms of these two surfaces do not intersect after blowing-up $y$. To get to $X_1^+$ we further need to contract the ruling transversal to the fibers of the projection $\PP^1\times\PP^1\cong E_y\to y$. Both $S$ and $E_\ell$ contain a different element in this ruling, thus after contracting them, $S$ and $E_\ell$ remain disjoint. In particular, $\phi_*(3H-E-E_\ell)$ is represented by a surface disjoint to $\lambda\subset E_\ell^+$.
\end{proof}

\medskip\noindent
The threefold $X^+$ is constructed by looking at one $5$-secant $\ell\subset X$. Since there are two such $5$-secant lines in $X$, and they are disjoint (Lemma \ref{secantes}), in what follows $X^+$ is defined by performing the construction accordingly for both of them. This is the inverse flip of $X$ we were aiming at. We prove this claim next and also describe all the birational models of $X$. 

\begin{theorem}\label{teorema2}
Let $X$ be the blow-up of $\PP^3$ along a general curve $C \in D_2$. The birational models of $X$ induced by the SBLD of $\EFF(X)$ are the following.
\begin{enumerate}[(1)]
\item[(0)] $X(H)\cong \PP^3$, and the map $\pi:X\rightarrow \PP^3$ is the blow-up map. 

\item $X(D)\cong X$ for $D\in(5H-E, H)$.

\item $X(5H-E)\cong Y\subset \PP^{15}$ and the map $p:X\rightarrow Y$ is a small contraction whose exceptional locus is $e_1\cup e_2$, the union of the 5-secants to $C$.

\item $X(D)\cong X^{+}$ for $D\in(-K_X,5H-E)$. The rational map $X\dashedrightarrow X^{+}$ is the inverse of a flip with exceptional locus $e_1\cup e_2$, the union of the 5-secants to $C$.

\item $X(4H-E)\cong Y_0$, where $Y_0\subset \PP^4$ is a $\mathbb{Q}$-factorial hypersurface of degree 4 with an elliptic singularity and 10 nodes.
\end{enumerate}
\end{theorem}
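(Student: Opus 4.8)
\medskip\noindent
The plan is to match the five models to the walls of the movable cone $\MOV(X)=[-K_X,H]$ recorded in Proposition \ref{SBLDcubic}, using the threefold $X^+$ assembled in Definition \ref{X1}. Items (0) and (1) are formal: for $D$ in the interior $(5H-E,H)$ of the nef cone the class $D$ is ample, so $X(D)\cong X$, while $X(H)\cong\PP^3$ and $X(5H-E)\cong Y$ follow from Lemma \ref{lemma::DC} applied to the contractions $\pi$ and $p$. For item (2) I would first compute $h^0(X,5H-E)=16$: Lemma \ref{lem::chi} gives $\chi(5H-E)=16$, Serre duality kills $H^3$, and the sequence $0\to\OO_X(H)\to\OO_X(5H-E)\to\OO_F((5H-E)|_F)\to0$ for a smooth $F\in|4H-E|$, together with $H^{>0}(X,H)=0$, reduces the vanishing of $H^1$ and $H^2$ to standard vanishing for the base-point-free class $(5H-E)|_F$ on the K3 surface $F$. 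Since $5H-E$ is base-point free and big (Proposition \ref{effectiveNefD3}), this exhibits $Y\subset\PP^{15}$; an argument parallel to Lemma \ref{lem::excLocus} then identifies $\mathrm{Exc}(p)=e_1\cup e_2$ and shows $p$ contracts no divisor, so $p$ is small.

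\medskip\noindent
For item (3) the key content is already in the construction preceding Definition \ref{X1}: the map $X\dashedrightarrow X^+$ factors as the blow-up of $e_1\cup e_2$, a flop along the two directrices, and the contractions $\mathrm{Cont}_\lambda$ of the planes $E_\ell^+$. As this alters $X$ only along the lines $e_1,e_2$ (codimension two) and the two resulting points of multiplicity four (Lemma \ref{extremalCurve1}), it is an isomorphism in codimension one; hence $X^+$ is a small $\QQ$-factorial modification with $\rho(X^+)=2$. I would then verify that the wall $(-K_X,5H-E)$ is carried into the ample cone of $X^+$ under the induced identification $N^1(X)\cong N^1(X^+)$ — Lemma \ref{toY} already shows the spanning classes pair correctly with the flopped and contracted loci — giving $X(D)\cong X^+$ there. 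Finally, since $-K_X\cdot e_i=4-5=-1$, the contracted curves $e_1,e_2$ are $K_X$-positive, so $p\colon X\to Y$ contracts $K$-positive curves and $X\dashedrightarrow X^+$ is genuinely the inverse of a flip, with exceptional locus $e_1\cup e_2$.

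\medskip\noindent
Item (4) is the substantive one. On $X^+$ the class $-K_{X^+}=4H-E$ is nef and big, and I would take $\pi^+\colon X^+\to Y_0\subset\PP^4$ to be the morphism it defines; $h^0(-K_X)=5$ (Lemma \ref{lem::chi}) places $Y_0$ in $\PP^4$, with degree $(-K_{X^+})^3=4$. The task is to read off the singular locus of $Y_0$ from the exceptional behavior of $\pi^+$. The strict transform $S^+$ of the cubic surface is $S$ with the two $(-1)$-curves $e_1,e_2$ contracted, hence a quintic del Pezzo containing the two terminal quotient points $p_1,p_2$ of $X^+$ produced by the flip, and its ten $(-1)$-curves are exactly the ten $4$-secant lines of Lemma \ref{secantes}. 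I would show that $\pi^+$ contracts $S^+$ to a point, identify that point as an elliptic singularity (using the normal bundle data of Lemmas \ref{lem::normalBundleLines}--\ref{lem::normalBundleDirectrix} and Corollary \ref{Corollary5.9} and the classification of Gorenstein threefold point singularities), and account for the ten nodes. For $\QQ$-factoriality I would deduce $\rho(Y_0)=1$ from the fact that the divisorial contraction $\pi^+$ drops the Picard number of $X^+$ by one, and combine this with the ten nodes imposing independent conditions on cubics; this is precisely the property that fails for the twenty nodes of a generic $Y\in\mathcal F$ (Remark \ref{max1}), and it is what distinguishes the $\QQ$-factorial quartic here from the non-$\QQ$-factorial generic one.

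\medskip\noindent
The hard part is this singularity analysis. A direct computation yields $(-K_{X^+})|_{S^+}\equiv0$, which by itself collapses all of $S^+$ — the ten $4$-secant lines included — to a single point; extracting ten honest nodes distinct from the elliptic point therefore demands a careful local study of $\pi^+$ near the two quotient singularities $p_1,p_2\in S^+$, together with a correct bookkeeping of the defect. Reconciling this with the coalescence of ten of the twenty nodes of the generic quartic, and certifying via the classification of threefold singularities that the contracted point is genuinely elliptic rather than rational, is where I expect the principal difficulty to lie.
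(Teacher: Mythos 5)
Your items (0)--(3) track the paper's proof closely. For (2) the paper does not bother with the cohomology computation $h^0(5H-E)=16$ (it follows from Lemma \ref{lem::chi} and base-point-freeness), and it identifies $\mathrm{Exc}(p)=e_1\cup e_2$ not by imitating Lemma \ref{lem::excLocus} but by a short perturbation argument: any contracted curve $\gamma$ not collapsed by $\pi$ satisfies $((5-\varepsilon)H-E)\cdot\gamma<0$, hence lies in $Bs((5-\varepsilon)H-E)=e_1\cup e_2$ by the SBLD -- slicker than your plan, but your route would work. For (3) the paper verifies the flip criterion directly: it builds $p^+:X^+\to Y$ by pushing forward the base-point-free class $5H-E$ through the factorization, and then checks that $D$ is $p$-ample while $-D^+$ is $p^+$-ample, the key computation being $K_{X^+}\cdot y_i^+=-\tfrac12$ from the discrepancy of the E5 contraction (Lemma \ref{extremalCurve1}); your appeal to Lemma \ref{toY} gestures at this but omits the discrepancy computation that makes the $K$-negativity of the flipped rays quantitative. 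Note also a factual slip: by Lemma \ref{toY} the strict transform of $S$ is \emph{disjoint} from the planes $E_{\ell_i}^+$, so the quintic del Pezzo $S^+$ does \emph{not} contain the two terminal points $p_1,p_2$; the flipped curves $y_i^+$ pass through those points and meet $S^+$ elsewhere.

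The genuine gap is item (4), and you are candid about it, but the pieces you defer are exactly the ones the paper supplies by different, concrete means. First, birationality: nef-and-big gives a contraction to the Proj, but to identify $Y_0$ as a quartic hypersurface in $\PP^4$ one needs generic injectivity of the 5-dimensional system itself; the paper gets this via liaison, linking $C$ by two quartics to $C_4\cup e_1\cup e_2$ with $C_4$ a plane quartic, observing that $|4H-E|$ restricts on $C_4$ to the canonical system (injective for a plane quartic) and that two general points of $\PP^3$ lie on such a $C_4$. Your sketch contains no substitute for this step. Second, the elliptic singularity: the paper avoids any classification of Gorenstein threefold singularities by an elementary hyperplane-section computation -- the strict transform of a general hyperplane section through $q$ is $S\cup M$ with $M$ a plane, and the minimal resolution $\tilde M\to M_0$ blows up the ten points $M\cap(\text{4-secants})$ and contracts the plane cubic $S\cap M$, now of self-intersection $9-10=-1$, certifying an elliptic surface singularity in the section. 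Third, $\QQ$-factoriality in the paper is one line ($Y_0$ is the output of a $K_{X^+}$-extremal divisorial contraction from a $\QQ$-factorial threefold); your detour through nodes imposing independent conditions on cubics is both unnecessary and unproven. Finally, your observation that $(-K_{X^+})|_{S^+}\equiv0$ collapses all of $S^+$ -- including its ten $(-1)$-curves, which are precisely the ten 4-secant lines, since any 4-secant must lie on the cubic $S$ by B\'ezout -- to the single point $q$ is a sharp one: the paper's closing assertion that "the ten 4-secant lines get contracted to ten ordinary double points" is stated without proof and sits in real tension with the contraction of all of $S^+$ to $q$, so the bookkeeping of the ten nodes (and of the images of the two non-Gorenstein points $p_i$, which cannot persist as singularities of a hypersurface) is a point where neither your sketch nor the paper's written argument is complete.
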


The following diagram summarizes the previous result.
\begin{equation*}
\xymatrix @!=1pc{
& X\ar[dl]_{\pi} \ar@{<--}[rr]^{\mbox{\tiny{flip}}} \ar[dr]^{p}&  &  X^{+} \ar[dl]_{p^+}\ar[dr]^{\pi^+}&\\
\PP^{3} && Y&& \ \ \  Y_0\subset \ \PP^{4}} 
\end{equation*}

\begin{proof}\ 
\begin{enumerate}[(1)]
\item Follows from Proposition \ref{effectiveNefD3}, (1).

\item The linear system $|5H-E|$ is base-point free as the ideal of $C$ is generated by quintics. Therefore, it induces a morphism
$$p:X\to Y\subset|5H-E|\cong\PP^{15}.$$
This morphism contracts $e_1\cup e_2$ (Proposition \ref{SBLDcubic}). Conversely, consider an irreducible curve $\gamma\subset X$ contracted by $p$. If $\gamma$ is not contracted by $\pi$, then for any $\varepsilon>0$
$$((5-\varepsilon)H-E)\cdot\gamma=(5H-E)\cdot\gamma - \varepsilon H\cdot\gamma=-\varepsilon\deg\pi(\gamma)<0,$$
so that $\gamma\subset Bs((5-\varepsilon)H-E)=e_1\cup e_2$. Otherwise, $\gamma$ is contracted by $\pi$, thus $H\cdot\gamma=0$ and
$$0=(5H-E)\cdot\gamma=-E\cdot\gamma.$$
This is a contradiction. In conclusion, the exceptional locus of $p$ is $e_1\cup e_2$, as claimed.

\item This item follows closely \cite[Ex. 6.20]{Debarre}. We begin by constructing a morphism $p^+:X^+\to Y$. The linear system $|5H-E|$ is base-point free on $X$ and induces the small contraction $p: X\rightarrow Y$ of the 5-secant lines $e_1,e_2$. Observe that these curves are $K_X$-positive.

Consider the variety $T$ obtained by performing the construction above for each 5-secant line to $C$ (see Figure \ref{flopX1}). Then the linear system of the class $P_0:=(\beta\circ\alpha)^*(5H-E)$ is base-point free and induces the composition $h:T\to X\to Y$. It contracts the curves $t_1$ and $t_2$, where $t_{i}$ is the section of the bundle map $\alpha|_{E_{y_{i}}}:E_{y_{i}}\to y_{i}$, with $y_{i}\subset E_{\ell_{i}}$ the directrix and $\alpha:T\to X_{1}$ the blow-up of $X_{1}$ along the directrices  $y_{1}$ and $y_{2}$. Therefore, it factors through $Cont_t:T\to X_1^+$. Set 
    $$P:=(Cont_t)_*L_0=\phi_*\beta^*(5H-E).$$
It follows from Lemma \ref{toY} that $P\cdot\lambda =0$. Hence, $h$ contracts the fibers of $Cont_\lambda:X_1^+\to X^+$, which implies that $h$ further factors through $X^+$, giving rise to a morphism $p^+:X^+\rightarrow Y$. Also, notice that $D\cdot y^+_i=0 \ (i=1,2)$, which means that $h$ contracts these curves.

\begin{equation*}
\xymatrix @!=1pc{ 
y_1,y_2\subset & X_1\ar[dl] \ar@{-->}[rr]_{\phi}^{\mbox{\tiny{flop}}} &  &  X_1^+\ar[dr]^{g^+} &\ \ \supset y_1^+,y_2^+ \\
X\ar[drr]_{p} \ar@{<--}[rrrr]_{\varphi}^{\mbox{\tiny{flip}}}&&  && X^+\ar[dll]^{p^+} \\
 &&Y &&} 
\end{equation*}
To prove that $\varphi^{-1}:X\dashedrightarrow X^+$ is the inverse of a flip, it suffices to show that if $D$ is any divisor class in the region $(-K_X,5H-E)$, and $D^+:=\varphi^{-1}_* D$, then both $-D^+$ is $p^+$-ample and $D$ is $p$-ample. The class $D$ is $p$-ample because the exceptional locus of $p$ is the union of the two $5$-secant lines $\ell_1 \cup \ell_2$ and $D\cdot \ell_i > 0.$

On the other hand, the exceptional locus of $p^+$ is the union  $y^+_1\cup y_2^+$ by construction. This means that each $\mathbb{R}[y^+_i]$ is an extremal ray. By Lemma \ref{extremalCurve1}, $X^+$ has terminal double points as singularities. Hence $K_{X_1^+}= Cont_\lambda^*K_{X^+}+\tfrac{1}{2}E_{\ell_1}^+ + \tfrac{1}{2}E_{\ell_2}^+$. Therefore
$$K_{X^+}\cdot y^+_i=-\frac{1}{2}.$$
That is, $\mathbb{R}[y_i^+]$ are $K_{X^+}$-negative extremal rays. Since $P\cdot y_i^+=0$, this implies that $D^+\cdot y^+_i<0$, and the result follows.

\item[(4)] First we argue that the linear system $|4H-E|$ induces a birational map
    $$\pi^+:X\dashedrightarrow Y_0\subset\PP^4.$$
Note that $C$ is linked by a complete intersection of two quartics to a reducible curve of the form $C_4\cup e_1\cup e_2$, where $C_4$ denotes a plane curve of degree 4, and $e_1,e_2$ are incident, but not coplanar to it \cite[Thm. 4.1]{vite}. Fix a smooth quartic surface $F$ containing $C$ and $C_4$. Inside $F$ we have:
$$(4H-E)|_F=C+C_4+e_1+e_2.$$
Thus, $|4H-E|$ induces on $C_4$ the canonical linear system, which is injective, plus a base-point at each intersection $C_4\cap e_i$. Note that two general points in $\PP^3$ are always contained in one such plane quartic $C_4$. Therefore, the map induced by $|4H-E|$ is birational onto a hypersurface $Y_0$ of degree $deg(Y_0)=(4H-E)^3=4$.

Let us show that $Y_0\subset\PP^4$ has an elliptic singularity. Observe that
$$(4H-E)\cdot(3H-E)\cdot H=2.$$
If $s$ is a hyperplane section of the cubic surface $S$, then $s\cdot e_1=s\cdot e_2=1$. Consequently, the degree 2 linear system obtained by restricting $|4H-E|$ to $s$ has two base-points. In other words, $4H-E$ contracts $s$. Therefore, it contracts $S$ to a singular point $q\in Y_0$. Now consider a general hyperplane section $M_0\subset Y_0$ through $q$. Its strict transform in $X$ is an element $M\in|H|$ which intersect $S$ along an elliptic curve. The minimal resolution of the singularity $q\in M_0$ is the blow-up $\tilde{M}$ of $M$ at the 10 points of intersection with the 4-secant lines to $C$, and the map $\tilde{M}\to M_0$ is the contraction of the elliptic curve $S\cap M$, which has self-intersection $-1$. This implies that $q\in Y_0$ is an elliptic singularity. Since $Y_0$ is the result of a $K_{X^+}$-extremal contraction, then it is $\mathbb{Q}$-factorial.

Finally, the ten 4-secant lines to $C$ get contracted to ten ordinary double points in $Y_0$.\\
\end{enumerate}
\end{proof}

\bigskip
\begin{bibdiv}
\begin{biblist}
\bib{am1}{article}{
Author = {Amrane, S.},
 Title = {Sur le sch{\'e}ma de {Hilbert} des courbes gauches de degr{\'e} $d$ et genre $g=(d-3)(d-4)/2$},
 Journal = {Annales de l'Institut Fourier},
 ISSN = {0373-0956},
 Volume = {50},
 Number = {6},
 Pages = {1671--1708},
 Year = {2000}
}

\bib{caro}{article}{
      title={On Gizatullin's Problem for quartic surfaces of Picard rank $2$}, 
      author={Araujo, C.}, author={Paiva, D.},
      author={Zikas, S.},
      year={2024},
      eprint={https://arxiv.org/abs/2410.08415}
}

\bib{beauville3}{article}{
  Author = {Beauville, A.},
 Title = {Vari\'et\'es de Prym et jacobiennes interm\'ediaires},
 Journal = {Annales Scientifiques de l'\'ecole Normale Sup\'erieure},
 ISSN = {0026-2285},
 Volume = {48},
 Pages = {39--64},
 Year = {1977}
}

\bib{Beau}{book}{
 Author = {Beauville, A.},
 Title = {Complex algebraic surfaces},
 Series = {London Mathematical Society Student Texts},
 ISSN = {0963-1631},
 Volume = {34},
 ISBN = {0-521-49842-2; 0-521-49510-5},
 Year = {1996},
 Publisher = {Cambridge University Press},
}
\bib{Beauville}{article}{
  Author = {Beauville, A.},
 Title = {Determinantal hypersurfaces},
 Journal = {Michigan Mathematical Journal},
 ISSN = {0026-2285},
 Volume = {48},
 Pages = {39--64},
 Year = {2000}
}

\bib{BCHM}{article}{
 author={Birkar, C.},
  author={Cascini, P.},
  author={Hacon, C.},
  author={McKernanan, J.},
 Title = {Existence of minimal models for varieties of log general type},
 Journal = {Journal of the American Mathematical Society},
 ISSN = {0894-0347},
 Volume = {23},
 Number = {2},
 Pages = {405--468},
 Year = {2010}
}

\bib{lamy}{article}{
 author={Blanc, J.},
author={Lamy, S.},
 Title = {Weak {Fano} threefolds obtained by blowing-up a space curve and construction of {Sarkisov} links},
 Journal = {Proceedings of the London Mathematical Society (3)},
 ISSN = {0024-6115},
 Volume = {105},
 Number = {5},
 Pages = {1047--1075},
 Year = {2012}
}

\bib{radu}{article}{
 author = {Casalaina-Martin, S.},
 author={Laza, R.},
 Title = {The moduli space of cubic threefolds via degenerations of the intermediate Jacobian} 
 Journal = {Journal f{\"u}r die reine und angewandte Mathematik}, 
 Number ={633},
 Year ={2009}, 
}

\bib{GC}{article}{
  title={Intermediate Jacobian of the Cubic Threefold},
author={Griffiths, Ph.}
author={Clemens, H.}
journal={Ann. of Math.},
volume={95},
  number={2},
  pages={281-356},
year={1972}
}

\bib{cheltsov}{article}{
  title={Nonrational nodal quartic threefolds},
  author={Cheltsov, I.},
  journal={Pacific journal of mathematics},
  volume={226},
  number={1},
  pages={65--81},
  year={2006},
  publisher={Mathematical Sciences Publishers}
}

\bib{DC}{article}{
  Author = {Chen, D.},
 Title = {Mori's program for the {Kontsevich} moduli space {{\(\overline{\mathcal M}_{0,0}(\mathbb P^3, 3)\)}}},
 Journal = {International Mathematics Research Notices},
 ISSN = {1073-7928},
 Pages = {1073-7928},
 Year = {2008}
}

\bib{CoskunChen}{article}{
  title={Extremal effective divisors on the moduli space of n-pointed genus one curves},
  author={Chen, D.},
  author={Coskun, I.},
  journal={Mathematische Annalen},
  volume={359},
  number={3},
  year={2014},
  pages={891--908}
}

\bib{corti&mella}{article}{
 Author = {Corti, A.},
Author={ Mella, M.},
 Title = {Birational geometry of terminal quartic 3-folds. {I}},
 Journal = {American Journal of Mathematics},
 ISSN = {0002-9327},
 Volume = {126},
 Number = {4},
 Pages = {739--761},
 Year = {2004}
}

\bib{five}{article}{
 author={Cutrone, J.},
  author={Limarzi, M.},
  author={Marshburn, N. A.},
  Title = {A weak {Fano} threefold arising as a blow-up of a curve of genus 5 and degree 8 on {{\({\mathbb{P}}^3\)}}},
 Journal = {European Journal of Mathematics},
 ISSN = {2199-675X},
 Volume = {5},
 Number = {3},
 Pages = {763--770},
 Year = {2019}
}

\bib{tow}{article}{
  author={Cutrone, J.},
  author={Marshburn, N.},
 Title = {Towards the classification of weak {Fano} threefolds with {{\(\rho = 2\)}}},
 Journal = {Central European Journal of Mathematics},
 ISSN = {1895-1074},
 Volume = {11},
 Number = {9},
 Pages = {1552--1576},
 Year = {2013}
}

\bib{cynk}{article}{
 Author = {Cynk, S.},
 Title = {Defect of a nodal hypersurface},
 Journal = {Manuscripta Mathematica},
 ISSN = {0025-2611},
 Volume = {104},
 Number = {3},
 Pages = {325--331},
 Year = {2001}
}

\bib{Debarre}{book}{ 
 Author = {Debarre, O.},
 Title = {Higher-dimensional algebraic geometry},
 Series = {Universitext},
 ISSN = {0172-5939},
 ISBN = {0-387-95227-6},
 Year = {2001},
 Publisher = {Springer}
}

\bib{3264}{book}{ 
  author={Eisenbud, D.}, 
  author={Harris, J.},
  Title = {3264 and all that. {A} second course in algebraic geometry},
 ISBN = {978-1-107-60272-4; 978-1-107-01708-5; 978-1-139-06204-6},
 Year = {2016},
 Publisher = {Cambridge University Press}
}

\bib{ell}{article}{ 
 Author = {Ellingsrud, G.},
 Title = {Sur le sch{\'e}ma de {Hilbert} des vari{\'e}t{\'e}s de codimension 2 dans {{\(P^e\)}} {\`a} c{\^o}ne de {Cohen}-{Macaulay}},
 Journal = {Annales Scientifiques de l'\'ecole Normale Sup\'erieure (4)},
 ISSN = {0012-9593},
 Volume = {8},
 Pages = {423--431},
 Year = {1975}
}

\bib{m2}{article}{ 
 author ={Grayson, D. R.},
 author={Stillman, M. E.},
          title = {Macaulay2, a software system for research in algebraic geometry},
          eprint={https://macaulay2.com/}}

\bib{har}{book}{ 
 Author = {Hartshorne, R.},
 Title = {Deformation theory},
 Series = {Graduate Texts in Mathematics},
 ISSN = {0072-5285},
 Volume = {257},
 ISBN = {978-1-4419-1595-5; 978-1-4419-1596-2},
 Year = {2010},
 Publisher = {Springer}
}

\bib{isk}{article}{
 Author = {Iskovskikh, V. A.},
 Title = {Anticanonical models of three-dimensional algebraic varieties},
 Journal = {Journal of Soviet Mathematics},
 ISSN = {0090-4104},
 Volume = {13},
 Pages = {745--814},
 Year = {1980}
}

\bib{jahnke}{article}{
  author={Jahnke, P.},
  author={Peternell, T.},
  author={Radloff, I.},
  Title = {Threefolds with big and nef anticanonical bundles. {II}},
 Journal = {Central European Journal of Mathematics},
 ISSN = {1895-1074},
 Volume = {9},
 Number = {3},
 Pages = {449--488},
 Year = {2011}
}

\bib{KM}{book}{ 
  author={Koll{\'a}r, J.},
  author={Mori, S.},
Title = {Birational geometry of algebraic varieties. {With} the collaboration of {C}. {H}. {Clemens} and {A}. {Corti}},
 Series = {Cambridge Tracts in Mathematics},
 ISSN = {0950-6284},
 Volume = {134},
 ISBN = {0-521-63277-3},
 Year = {1998},
 Publisher = {Cambridge University Press}
}

\bib{barz}{article}{
  author = {Le Barz, P.},
 title = {Formules multisecantes pour les courbes gauches quelconques},
 year = {1982},
 Journal = {Enumerative geometry and classical algebraic geometry, {Prog}. {Math}. 24, 165-197.}
}

\bib{LLV}{article}{
  author={Leal, M.},
  author={Lozano Huerta, C.},
  author={Vite, M.},
 title={The Noether-Lefschetz locus of surfaces in $\mathbb{P}^3$ formed by determinantal surfaces}, 
 Journal = {Mathematische Nachrichten},
 Volume = {297},
 Number = {12},
 Year = {2024},

      }

\bib{mass}{article}{
title={Mori Dream Spaces, log Fano varieties and moduli spaces of rational curves},
  author={Massarenti, A.},
  year={2014},
  eprint={http://mcs.unife.it/alex.massarenti/files/mds.pdf}
}

\bib{mell}{article}{
 Author = {Mella, M.},
 Title = {Birational geometry of quartic 3-folds. {II}: {The} importance of being {{\(\mathbb{Q}\)}}-factorial},
 FJournal = {Mathematische Annalen},
 Journal = {Mathematische Annalen},
 ISSN = {0025-5831},
 Volume = {330},
 Number = {1},
 Pages = {107--126},
 Year = {2004}
}

\bib{oslo}{article}{
 Author = {Pettersen, K.},
 Title = {On nodal determinantal quartic hypersurfaces in $\PP^4$},
 Journal = {PhD Thesis, University of Oslo},
 Year = {1998}
 }

\bib{ra}{article}{
 author = {Ranestad, K.},
 author={Popescu, S.},
 Title = {Surfaces of Degree 10 in the Projective Fourspace via Linear systems and Linkage},
 eprint = {https://arxiv.org/pdf/alg-geom/9311006},
}

\bib{reivis}{article}{
 author = {Reichstein, Z.},
 author={Vistoli, A.},
 Title = {On the dimension of the locus of determinantal hypersurfaces},
 Journal = {Canadian Mathematical Bulletin},
 ISSN = {0008-4395},
 Volume = {60},
 Number = {3},
 Pages = {613--630},
 Year = {2017}
}

\bib{sha}{article}{
 Author={Shramov, C.},
 Title = {Birational rigidity and $\mathbb{Q}$-factoriality of a singular double cover of a quadric branched over a divisor of degree 4},
 Journal = {Mathematical Notes},
 Volume = {84},
 Pages = {280--289},
 Year = {2008}
}

\bib{vite}{article}{
 title={ Liaison theory and the birational geometry of the Hilbert scheme of curves in the projective 3-space},
  author={Vite, M.},
 eprint={https://arxiv.org/pdf/2302.06694},
  year={2024}

}

\bib{sernesi}{book}{
  title={Deformations of algebraic schemes},
  author={Sernesi, Edoardo},
  year={2006},
  publisher={Springer}
}
\end{biblist} 
\end{bibdiv}
\end{document}